\newtheorem{prop}{Proposition}[section]
\newtheorem{defn}[prop]{Definition}
\newtheorem{lem}[prop]{Lemma}
\newtheorem{cor}[prop]{Corollary}
\newtheorem{thm}[prop]{Theorem}
\newtheorem{examp}[prop]{Example}
\newtheorem{que}[prop]{Question}
\numberwithin{equation}{section}
\def\C{\mathbb{C}}
\def\E{\mathbb{E}}
\def\Z{\mathbb{Z}}
\def\Q{\mathbb{Q}}
\def\R{\mathbb{R}}
\def\N{\mathbb{N}}
\def\P{\mathbb{P}}
\def\X{\mathscr{X}}
\def\Y{\mathscr{Y}}
\def\V{\mathscr{V}}
\def\W{\mathscr{W}}
\def\M{\mathcal{M}}
\def\LP{\left(}
\def\RP{\right)}
\def\Spec{{\rm Spec}\,}
\def\F{\mathbb{F}}
\def\Aut{{\rm Aut}}
\def\rk{{\rm rk}}
\def\con#1#2#3{\hbox{$#1\equiv#2$ \rm(mod $#3$)}}
\def\vt{\vartheta}
\begin{document}

\title[Multiplicative Series and Modular Forms]{Multiplicative Series, Modular Forms, and Mandelbrot Polynomials}

\author{Michael Larsen}
\address{Department of Mathematics\\
    Indiana University \\
    Bloomington, IN 47405\\
    U.S.A.}
\email{mjlarsen@indiana.edu}

\thanks{The author was partially supported by the Sloan Foundation and the NSF}
\begin{abstract}
We say a  power series $\sum_{n=0}^\infty a_n q^n$ is \emph{multiplicative} if the sequence $1,a_2/a_1,\ldots,a_n/a_1,\ldots$ is so.
In this paper, we consider multiplicative power series $f$ such that $f^2$ is also multiplicative.
We find a number of examples for which $f$ is a rational function or a theta series and prove that the complete set of solutions is
the locus of a (probably reducible) affine variety over $\C$.   
The precise determination of this variety turns out to be a finite computational problem, but it seems to be beyond the reach of current computer algebra systems.
The proof of the theorem depends on a bound on the logarithmic capacity of the
Mandelbrot set.  
\end{abstract}
\maketitle

\section{Introduction}

Let $r_k(n)$ denote the number of representations of $n$ as a sum of $k$
squares.  It is classical that $r_1(n)/2$, $r_2(n)/4$, $r_4(n)/8$, and 
$r_8(n)/16$ are multiplicative functions of $n$; the first trivially, 
the second thanks to Fermat, and the third and fourth thanks to Jacobi
\cite[\S\S42,44]{Jacobi}.
From the standpoint of generating functions, this can be interpreted as 
the statement that the theta series $\vt_{\Z}(q)$ and its square, fourth power, and eighth power, 
all have multiplicative
coefficients (after suitable 
normalization).  As a starting point, we prove the converse:

\begin{thm}
\label{eight}
If $f(q)\in\C[[q]]$, $f(q)^2$, $f(q)^4$, and $f(q)^8$ are all multiplicative,
then $f(q) = c\vt_{\Z}(\pm q)$.
\end{thm}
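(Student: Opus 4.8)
The plan is to treat the four hypotheses as an infinite system of polynomial equations in the Taylor coefficients of $f$ and to show that, once the higher-power conditions are imposed, the theta series are the only solutions. First I would normalize. Since $f^2$ is multiplicative its linear coefficient $c_1 = 2a_0a_1$ must be nonzero, so $a_0 \ne 0$ and $a_1 \ne 0$; rescaling $f$ (which preserves every hypothesis) we may take $a_0 = 1$ and write $a_1 = t$. Multiplicativity of $f$ itself lets us write $a_n = t\,b_n$ with $b_1 = 1$ and $b$ a multiplicative arithmetic function, so $f = 1 + t\sum_{n\ge 1} b_n q^n$. I would also record the symmetry $q \mapsto -q$, i.e.\ $a_n \mapsto (-1)^n a_n$: because in any coprime factorization at least one factor is odd, this twist preserves multiplicativity of each of $f, f^2, f^4, f^8$, and it is this symmetry that produces the sign ambiguity $\vt_{\Z}(\pm q)$ in the conclusion.

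Next I would turn the hypotheses into equations. For $k \in \{1,2,3\}$ write $f^{2^k} = \sum_n c^{(k)}_n q^n$ and $\tilde c^{(k)}_n = c^{(k)}_n/c^{(k)}_1$; multiplicativity of $f^{2^k}$ is the assertion that $\tilde c^{(k)}_{mn} = \tilde c^{(k)}_m \tilde c^{(k)}_n$ for every coprime pair $(m,n)$. After clearing the (nonzero) denominators, each such identity is a polynomial relation in $t$ and finitely many $b_j$ with $j \le mn$. I would first extract the low-order consequences, running over the coprime pairs $(2,3), (2,5), (3,5), (3,4), \ldots$ and combining the three families $k=1,2,3$. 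The expectation is that this finite subsystem already forces $t^2 = 4$ together with the beginning of the theta pattern $b_2 = b_3 = 0$, $b_4 = 1$, $b_5 = 0, \ldots$, pinning the candidate down to $\vt_{\Z}(\pm q)$ through its first several coefficients. It is essential that all four conditions are used: the pair $(f, f^2)$ alone leaves a positive-dimensional family of solutions — precisely the affine variety referred to in the abstract — so the rigidity must come from the higher powers $f^4$ and $f^8$.

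The main obstacle is the passage from the first few coefficients to all of them: a priori the values $b_{p^j}$ at prime powers are free parameters, and there are infinitely many of them. Here I would exploit the fact that the hypotheses concern the iterated squaring $f \mapsto f^2 \mapsto f^4 \mapsto f^8$, so that the coefficient recursions are governed by a quadratic dynamical system of the same shape as the Mandelbrot iteration $z \mapsto z^2 + \mathrm{const}$. The plan is to show that any solution deviating from the theta pattern forces a parameter built from $t$ to lie in a rescaled copy of the Mandelbrot set while its iterates grow, so that the known bound on the logarithmic capacity of the Mandelbrot set caps the admissible range of $t$ (expected in the concrete form $|t| \le 2$), with equality — the only case compatible with an honest multiplicative solution — occurring exactly for $f = c\,\vt_{\Z}(\pm q)$. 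Making this capacity-to-coefficients translation precise, and verifying that the boundary value $t = \pm 2$ propagates the theta pattern to every coefficient, is the crux; the normalization and the low-order elimination are routine by comparison.
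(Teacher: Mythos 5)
There is a genuine gap here, and it sits exactly where you locate it yourself: the passage from finitely many coefficients to all of them. The paper does not prove Theorem~\ref{eight} directly; it deduces it from Theorem~\ref{four} (the classification of $f$ with $f$, $f^2$, $f^4$ multiplicative, which yields exactly six theta series), and then the $f^8$ hypothesis is used only at the very last step, to discard $\vt_{\Z[i]}(\pm q)$ and $\vt_{\Z[\zeta_3]}(\pm q)$ because their eighth powers are not multiplicative. Theorem~\ref{four} in turn requires (a) a delicate elimination over the equations $P_n$, $Q_n$ for $n\le 20$ --- which the paper describes in detail, including the need to strip off common factors of degrees $24$ and $92$, so calling this ``routine'' undersells it, though it is at least a finite computation --- and, crucially, (b) Proposition~\ref{ExceptSparse}, that a non-sparse element of $\X$ is determined by its first $17$ coefficients, together with the elimination of sparse solutions. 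Your proposal contains no substitute for (b).

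More importantly, the mechanism you sketch for closing that gap is not the one that works, and as stated it would fail. You propose that a parameter built from $t=a_1$ is forced into a rescaled Mandelbrot set, with the capacity bound yielding $|t|\le 2$ and equality characterizing $\vt_{\Z}(\pm q)$. In the paper the Mandelbrot polynomials have nothing to do with $a_1$ (which is normalized to $1$ throughout): they govern the coefficients $a_{i(p-1)+1}$ of a hypothetical \emph{sparse} solution, i.e.\ one with $a_2=\cdots=a_{p-1}=0$ and $a_p\neq 0$ for a Mersenne prime $p\ge 31$. The constrained quantity is the leading coefficient $a_p$, which is shown to be an algebraic integer all of whose conjugates lie in a disk of logarithmic capacity less than $1$; Fekete's theorem then leaves only $a_p\in\{0,1\}$, both of which are excluded, so sparse solutions do not exist. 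There is no dichotomy of the form ``$|t|\le 2$ with equality at the theta series'' anywhere in the argument, and no reason to expect the boundary case of a capacity bound to single out $\vt_{\Z}$. To repair your outline you would need either to import the paper's full apparatus (the rank conditions on $F$, $M'$, $M''$, the Mersenne/Fermat gap lemmas, and the sparse-case analysis), or to exploit the $f^4$ and $f^8$ hypotheses to get a genuinely shorter rigidity argument --- but the latter is not supplied by what you have written.
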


This is an immediate consequence of the following more difficult result:

\begin{thm}
\label{four}
If $f(q)$, $f(q)^2$, and $f(q)^4$ all have multiplicative coefficients,
then $f(q)$ is a constant multiple of $\vt_{\Z}(\pm q)$, $\vt_{\Z[i]}(\pm q)$, or $\vt_{\Z[\zeta_3]}(\pm q)$.
\end{thm}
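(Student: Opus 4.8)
The plan is to combine the general structure theorem quoted above—that the set $\V$ of series $f$ with both $f$ and $f^2$ multiplicative is, after the normalization below, the locus of an affine variety over $\C$—with the extra rigidity forced by the third condition. First I would exploit the two evident symmetries: multiplicativity is preserved under $f(q)\mapsto cf(q)$ and under $f(q)\mapsto f(-q)$ (since $n\mapsto(-1)^n$ is completely multiplicative). So I normalize the constant term to $a_0=1$ whenever $a_0\neq0$ and work up to $q\mapsto -q$; the case $a_0=0$ makes the $q^1$-coefficient of $f^2$ vanish, so the normalized ratio defining multiplicativity is undefined, and is set aside as degenerate. With $a_0=1$, multiplicativity of $f$ gives $a_n=a_1\prod_p c_{p^{v_p(n)}}$ for a multiplicative sequence $c$ with $c_1=1$, so $f$ is determined by $a_1$ together with the prime-power values $a_{p^k}$.

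The key reduction is that requiring $f^4=(f^2)^2$ to be multiplicative says precisely that $g:=f^2$ again lies in $\V$. Hence the solution set of Theorem~\ref{four} is $\V\cap S^{-1}(\V)$, where $S\colon f\mapsto f^2$ is the squaring operator. I would then expand the multiplicativity relations for the coefficients $b_n$ of $f^2$, namely $b_mb_n=b_1b_{mn}$ for $\gcd(m,n)=1$, together with the analogous relations for $f^4$, as polynomial identities in the $a_n$, and compare them with the Hecke-type three-term recursion that multiplicativity of $f$ already imposes on each local sequence $a_p,a_{p^2},a_{p^3},\ldots$. The aim of this step is to show that the two conditions together force $f$ to be the theta series of a positive-definite quadratic lattice of rank $1$ or $2$: the local recursions and the cross-prime relations are exactly those satisfied by the representation numbers of such a lattice, and the leading datum $a_1$ is pinned to the order $w$ of a unit group, i.e.\ to $a_1\in\{2,4,6\}$ up to sign.

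The genuine obstacle, and where the Mandelbrot input enters, is ruling out a continuum (or an infinite discrete family) of spurious solutions before this arithmetic identification can be made—equivalently, proving that $\V\cap S^{-1}(\V)$ is finite. The additive convolution defining the coefficients of $f^2$ couples \emph{all} of the $a_i$ simultaneously, so there is no a priori reason the local recursions should be globally consistent, and the consistency conditions organize into the iterates $P_{k+1}=P_k^2+c$ of the squaring map acting on a free parameter $c$. Here $c$ is an algebraic integer, the relevant coefficients being algebraic as solutions of the finitely many equations cutting out $\V$, and boundedness of the coefficient sequence confines every Galois conjugate of $c$ to the Mandelbrot set $\M$. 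I would then run a Fekete--Szeg\H{o}--type argument: using the bound on the logarithmic capacity of $\M$, a complete set of conjugate algebraic integers cannot be crowded into the admissible region unless it is one of finitely many exceptional sets, giving finiteness. This is the step I expect to be hardest, precisely because the capacity of $\M$ is extremal (equal to $1$), so the bound must be applied on a proper subregion carved out by the extra constraints, where the capacity is strictly smaller.

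Finally, with finiteness of $\V\cap S^{-1}(\V)$ in hand, I would carry out the residual finite analysis: classify the rank $\le2$ lattices whose theta series, together with its square and fourth power, has multiplicative coefficients. Multiplicativity of the representation numbers selects the lattices with unique factorization in the associated imaginary quadratic order and with no cusp-form contamination up to the relevant weight, leaving exactly $\Z$, $\Z[i]$, and $\Z[\zeta_3]$; reinstating the scaling and the $q\mapsto -q$ symmetry then yields $c\,\vt_{\Z}(\pm q)$, $c\,\vt_{\Z[i]}(\pm q)$, and $c\,\vt_{\Z[\zeta_3]}(\pm q)$. That these three in fact satisfy all three multiplicativity conditions is the classical direction, due to Fermat and Jacobi, so the entire content lies in the finiteness and exclusion steps above.
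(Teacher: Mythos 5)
Your reduction ``$f^4$ multiplicative $\Leftrightarrow$ $f^2\in\X$, so the solution set is $\X\cap S^{-1}(\X)$'' is a correct and useful framing, and the verification that the six listed series are solutions is indeed the easy classical direction. But the two load-bearing steps of your argument are gaps. First, the claim that the local Hecke-type recursions plus the cross-prime relations ``are exactly those satisfied by the representation numbers'' of a rank $\le 2$ lattice is asserted with no argument; this is the entire content of the theorem. The paper does not recognize a lattice structure conceptually: it writes down the $14$ polynomial equations $P_n$, $Q_n$ for $n\in[6,20]\cap\Z\setminus\P$ in the $13$ unknowns $a_0,a_2,\ldots,a_{19}$ (where $Q_n$ encodes multiplicativity of $f^4$ the way $P_n$ does for $f^2$) and carries out an explicit elimination, finding exactly six solutions, which happen to be the truncations of the six theta series. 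Nothing in your sketch substitutes for that computation.

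Second, the Mandelbrot capacity input is attached to the wrong problem. In the paper it is used only to exclude \emph{sparse} elements of $\X$ (those with $a_2=\cdots=a_{15}=0$), where the specific coefficients $a_{i(p-1)+1}$ of a solution of index $p$ are shown to satisfy the recurrence $a_{i(p-1)+1}=M_i(a_p)$ with $M_{n}$ built from $I_n(c)=I_{n-1}(c)^2+c$; that recurrence is what places the conjugates of $a_p$ near $-\tfrac12\M$ and lets the Fekete argument run. For a general element of $\X\cap S^{-1}(\X)$ no such one-parameter quadratic iteration is identified, so your finiteness argument via capacity does not get started --- and it is not needed: the six candidate truncations are all non-sparse (each has a nonzero coefficient among $a_2,a_3,a_4$), and the paper concludes by the purely algebraic rigidity statement (Proposition~\ref{ExceptSparse}) that a non-sparse element of $\X$ is determined by its first $17$ coefficients, which rests on the full-rank conditions for the matrices $F$, $M'$, $M''$ and elementary facts about consecutive prime powers, not on capacity theory. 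A minor further point: your normalization $a_0=1$ discards the scaling used to make $a_1=1$, and the ``degenerate'' case you set aside is empty in the paper's normalization, where the constant term is $\tfrac1{2a_0}$ with $a_0\neq0$ built in.
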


A much more difficult problem is to characterize all power series $f(q)$ such that $f(q)$ and $f(q)^2$ are multiplicative, without assuming $f(q)^4$ is multiplicative as well.
We denote by $\X$ the set of normalized multiplicative power series $f$ such that $f^2$ is also multiplicative.  (See Definition~\ref{normal} for precise definitions.)
Since a power series with multiplicative coefficients is
determined by its prime power coefficients, and since prime powers
form a density-zero subset of the integers, when $n$ is large, the first $n$ coefficients of any $f(q)\in \X$ must satisfy a highly overdetermined system of polynomial equations.
From this point of view, the fact that $\X\neq \emptyset$ is surprising.
On the other hand, it
is clear that any Hecke eigenform whose square is again a Hecke eigenform
belongs to $\X$.  The relationship 
between the action of Hecke operators on the space of modular forms of 
a fixed weight and the ring structure on the graded vector space of all 
modular forms is rather mysterious, but, in general,
the square of an eigenform is unlikely to be an eigenform unless there is no 
alternative.  Indeed, a number of papers have examined when the product of two eigenforms is again an eigenform
(see, e.g., \cite{BJTX, Duke, Emmons, Ghate, Johnson}), and the moral of these papers seems to be that this phenomenon is a transient one, associated to low levels and weights.
A natural place to look for elements of $\X$ is therefore 
among (noncuspidal) forms of low level and weight.  One might reasonably guess that $\X$ consists entirely of modular forms, but this turns out to be wrong; 
certain rational functions
analytic on the open unit disk also belong to $\X$.

Between modular forms and rational functions,
the locus $\X$ contains at least nine one-parameter families of 
solutions and twelve isolated points.    There is numerical evidence, based on 
the search for (mod $p$) solutions for small primes $p$, 
that these solutions
constitute all of $\X$, but the results of this paper fall far short of this.
Truncating power series at the $q^n$ coefficient, as $n$ varies, one obtains a sequence
of complex algebraic varieties $\X_n$ of which $\X$ is the inverse limit.  
The sequence $\X_n$ does not stabilize.  
The main theorem of this paper asserts that, nevertheless, $\X$ itself has the structure of a complex 
affine variety.  More precisely, there exists $n$ (in fact, $n=16$ will do) such that the natural map $\X\to \X_n$ is injective,
and its image is a Zariski-closed subset of $\X_n$.  In particular, all solutions are determined by their degree $16$ truncations.

Remarkably, our proof of this theorem depends, ultimately, on the fact that the logarithmic
capacity of the Mandelbrot set $\M$ is less than $2$.  (In fact, it is known to be $1$ \cite[\S 6.2]{Steinmetz}.)
Computer algebra computations reduce the problem to  the ``sparse'' case, where the coefficients
$a_2=a_3=\cdots=a_{n-1}=0$ and $a_n\neq 0$, for some $n\ge 16$.  In this case, one shows first
that $n$ is a Mersenne prime and then that  the first $\frac {n-1}2$ terms of
the sequence $a_n, a_{2n-1}, a_{3n-2}, a_{4n-3},\ldots$ satisfy a certain non-linear recurrence.
In fact, there is a universal sequence $M_1,M_2,\ldots\in \Q[y]$ of ``Mandelbrot polynomials'' such that
$a_{i(n-1)+1} = M_i(a_1)$.  The multiplicativity of the sequence of coefficients implies that
if $i(n-1)+1$ is not a prime power,  then $a_{i(n-1)+1} = 0$.
The recurrence formula for the $M_i$ implies  that if $r_i$ is a root of $M_i(y)$ for each $i$ and $r$ is a limit point of
the sequence $r_i$, then 
$-2r$ belongs to the Mandelbrot set.  
Although the roots
of the individual $M_i(y)$ need not be algebraic integers, we have enough
$p$-adic control to guarantee integrality for a simultaneous
root of many $M_i$, like $a_n$.  Since $\X$ is $\Aut(\C)$-stable
and 
a set of capacity less than $1$ contains only finitely many complete Galois orbits
of algebraic integers, there are only finitely many possibilities for $a_n$,
and in the end we show $a_n=0$.  We actually work not with $\M$ itself but with an open disk containing $\M$ and of radius $<2$,
thus obviating the need to understand the fine structure of $\M$.

The paper is organized as follows.
In \S2, we give some preliminaries about inverse systems of varieties.
In \S3, we present the known elements of $\X$.
In \S4, we
assemble elementary results about the set of prime powers which are needed in the next two sections.
In \S5, we present results of Maple-assisted computations which reduce the problem to the sparse case.
In principle, the results of this section imply that the non-sparse solutions 
can be determined by a finite 
computation, but this seems well beyond
the reach of currently available computer algebra systems. 
Sparse solutions are ruled out in \S6 by the method discussed above.
The last section presents
variants and related questions, including proofs of Theorems \ref{eight} and \ref{four}.
An appendix presents the results of an exhaustive search for non-sparse solutions defined over the finite field $\F_p$ for small primes $p>2$.
I am grateful to Anne Larsen for carrying out this search and identifying almost all of her solutions as modular forms, including a number of ``exceptional solutions'',
that is, examples in which the form involved is not the (mod $p$) reduction of any known characteristic zero solution.  The solutions (x) and (x'), which did not appear in an
earlier draft of this paper, originally appeared as exceptional solutions in her (mod $p$) tables for $p=3$, $p=11$, $p=17$, and $p=19$.

I would like to thank the Hebrew University in Jerusalem for its hospitality
while much of this work was carried out.  I am grateful to Ze\'ev Rudnick for pointing out some relevant literature.

\section{Systems of Affine Varieties}

Throughout this paper, an \emph{affine variety} means a scheme $\V = \Spec A$, where $A$ is a finitely
generated algebra over $\C$, and a \emph{morphism of varieties} means a morphism over $\Spec \C$.  When no confusion seems likely to result, we identify $\V$ with its set $\V(\C)$ of closed points.

Let $(\Y_n, \phi_{m,n}\colon \Y_m\to \Y_n)$ denote an inverse system of affine varieties 
indexed by integers $n\ge 2$.  Let $(\Y= \varprojlim \Y_n(\C), \psi_n\colon \Y\to \Y_n(\C))$ denote the set-theoretic inverse limit.

\begin{defn}
We say the inverse limit $\Y$ is of \emph{affine type} if there exist $n$ and a closed subvariety $\V_n$ of $\Y_n$ such that $\psi_n$ is injective and
$\psi_n(\Y) = \V_n(\C)$.
\end{defn}

\begin{examp}
If $\Y_n = \Spec \C[x]$ for all $n$, and every map $\phi_{m,n}$ comes from the $\C$-algebra homomorphism $\C[x]\to \C[x]$ mapping $x$ to $0$,
then $\Y$ is of affine type (and consists of a single point).
\end{examp}

\begin{examp}
If $\Y_n = \Spec \C[x,\frac 1 {x-1}, \frac 1 {x-2}, \ldots, \frac 1 {x-n}]$ with the obvious inclusion morphisms, then $\Y = \C \setminus \Z^{>0}$ is not of affine type. 
\end{examp}

\begin{examp}
If $\Y_n = \Spec \C[x]/(x^{2^n}-1)$ with the obvious morphisms, then $\Y=\Z_2$ is not of affine type. 
\end{examp}

\begin{prop}
\label{AffineType}
Let $(\Y_n, \phi_{m,n}\colon \Y_m\to \Y_n)$ be an inverse system, $n\ge 2$ an integer, and $\V$ a closed subvariety of $\Y_n$.
Assume the following conditions hold:
\begin{enumerate}
\item For all $y\in \Y_n$, $|\psi_n^{-1}(y)| \le 1$, 
\item $\V$ is contained in $\psi_n(\Y)$,
\item For all $y\in \Y_n\setminus \V$ and all sufficiently large $m$, $|\phi_{m,n}^{-1}(y)|\le 1$.
\item For all $y\in \Y_n\setminus \V$, there exists a neighborhood $U_y$ of $y$ in the complex topology
such that there exist arbitrarily large integers $m$ for which $\phi_{m,n}^{-1}(U_y)$ is precompact in the complex topology.
\end{enumerate}
Then $\Y$ is of affine type.
\end{prop}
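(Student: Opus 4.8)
The plan is to dispose of injectivity immediately and then reduce everything to a single topological statement about the image $S:=\psi_n(\Y)\subseteq \Y_n(\C)$. Condition (1) says exactly that $\psi_n$ is injective, so to establish affine type it suffices to produce a closed subvariety $\V_n$ with $\V_n(\C)=S$; equivalently, I must show that $S$ is Zariski closed, since then I may take $\V_n$ to be the reduced closed subscheme supported on $S$. For each $m\ge n$ set $W_m:=\phi_{m,n}(\Y_m(\C))$; by Chevalley's theorem each $W_m$ is constructible, and since $\phi_{m',n}=\phi_{m,n}\circ\phi_{m',m}$ the sets $W_m$ form a decreasing family. Visibly $S\subseteq\bigcap_m W_m$, and the substance of the argument is to show that this is an equality and that the intersection is Zariski closed.

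First I would prove $S=\bigcap_m W_m$ using (2) and (3). Given $y\in\bigcap_m W_m$, the fibers $F_m:=\phi_{m,n}^{-1}(y)$ are all nonempty. If $y\in\V$ then $y\in S$ by (2). If $y\notin\V$, then by (3) there is an $m_0$ with $|F_m|\le 1$, hence $F_m$ a single point $w_m$, for all $m\ge m_0$; compatibility of these points under the transition maps is automatic because each target fiber is a singleton, and one extends the system to all $l\ge n$ by setting $w_l:=\phi_{m_0,l}(w_{m_0})$, producing a point of $\Y$ over $y$. Next I would show that $S$ is closed in the analytic topology, and this is where hypothesis (4) enters. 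Let $y$ be an analytic limit of points $z^{(k)}\in S$; if $y\in\V$ we are done by (2), so assume $y\notin\V$ and choose $U_y$ as in (4). Fix one of the cofinally many $m$ for which $\phi_{m,n}^{-1}(U_y)$ is precompact. For large $k$ we have $z^{(k)}\in U_y$, so the $m$-components of the lifts of the $z^{(k)}$ lie in the precompact set $\phi_{m,n}^{-1}(U_y)$; a convergent subsequence then has a limit $w_m$ with $\phi_{m,n}(w_m)=y$ by continuity of $\phi_{m,n}$ for the analytic topology. Hence $y\in W_m$ for cofinally many, and therefore (by monotonicity) all, $m$, so $y\in\bigcap_m W_m=S$.

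It remains to upgrade ``analytically closed'' to ``Zariski closed'', and this reconciliation of the two topologies is the main obstacle. Let $F:=\overline{S}^{\,\mathrm{Zar}}$ and, decomposing into irreducible components, reduce to the case that $F$ is irreducible and $S$ is Zariski dense in $F$. Each $W_m$ is then a constructible, Zariski-dense subset of the irreducible variety $F$, hence contains a dense Zariski-open subset $O_m\subseteq F$, so that $F\setminus W_m$ is contained in the proper closed subset $Z_m:=F\setminus O_m$. Consequently $F\setminus S=\bigcup_m(F\setminus W_m)\subseteq\bigcup_m Z_m$ is a countable union of nowhere-dense subvarieties, i.e. meager in the analytic topology. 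On the other hand, $S$ is analytically closed, so $F\setminus S$ is analytically open in $F$; since $F(\C)$ is a Baire space, a nonempty analytically open set cannot be meager. Therefore $F\setminus S=\emptyset$, the image $S=F(\C)$ is Zariski closed, and $\Y$ is of affine type. The delicate point throughout is that the precompactness in (4) is exactly what drives the analytic limits of $S$ back into every $W_m$; without it one could conclude only that $S$ is a countable decreasing intersection of constructible sets, which need not be Zariski closed (witness $\C\setminus\Z^{>0}$), and the Baire argument would have nothing analytic on which to bite.
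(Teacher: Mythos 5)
Your proof is correct, but it is organized differently from the paper's, and the difference is worth recording. Both arguments turn on the same use of hypothesis (4): precompactness of $\phi_{m,n}^{-1}(U_y)$ lets one lift an analytically convergent sequence of image points and extract a convergent subsequence of lifts, so analytic limit points of the image are again in the image. The paper, however, passes to Zariski closures $\W_m=\overline{\phi_{m,n}(\Y_m)}$, stabilizes the decreasing chain by the Hilbert basis theorem to define $\V_n:=\W_k$, and then shows directly that every $y\in\V_n\setminus\V$ lifts: the image $\phi_{m,n}(\Y_m)$ is a Zariski-dense constructible, hence analytically dense, subset of $\V_n$, so $y$ is an analytic limit of image points, each fiber $\phi_{m,n}^{-1}(y)$ is nonempty by the precompactness argument, and one concludes with the lemma that an inverse limit of nonempty finite sets is nonempty. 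You instead work with the set-theoretic images $W_m$, identify $\psi_n(\Y)=\bigcap_m W_m$ (condition (3) makes the relevant fibers singletons, so compatibility is automatic and the finite-inverse-limit lemma is not needed), prove that this set is analytically closed, and finish with a Baire category argument: the complement of $S$ in its Zariski closure is a countable union of proper closed subvarieties, hence meager, yet analytically open, hence empty. The paper's route is the more constructive one --- it exhibits $\V_n$ explicitly as the eventual closure $\W_k$ --- while yours dispenses with Noetherian stabilization and isolates cleanly the one comparison of topologies that both proofs need (that a Zariski-dense constructible set is analytically dense, which for you appears as the nowhere-density of the sets $Z_m$). The only step you leave implicit is extending the compatible system $(w_l)_{l\ge m_0}$ to the indices $l<n$ of the inverse system, which is trivial.
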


\begin{proof}
For $m > n$, let $\W_m$ denote the Zariski closure of $\phi_{m,n}(\Y_m)$.  Thus,
$$\Y_n\supseteq \W_{n+1}\supseteq \W_{n+2}\supseteq \W_{n+3}\supseteq\cdots,$$
and by the Hilbert basis theorem, this chain must eventually stabilize to some closed subvariety $\W_k\subseteq \Y_n$.
We define $\V_n := \W_k$.
Thus, $\V\subseteq \psi_n(\Y)\subseteq \V_n.$  We need only prove that for all $y\in \V_n\setminus \V$, the inverse image
$\psi_n^{-1}(y)\subset \Y$ is non-empty.  As $\phi_{m,n}^{-1}(y)$ is finite for all $y\in \Y_n\setminus \V$ and for all $m$ sufficiently large,
and since the inverse limit of an inverse system of non-empty finite sets is non-empty, it suffices to prove that
$\phi_{m,n}^{-1}(y)$ is non-empty for all $y\in \V_n\setminus \V$ and all $m$ sufficiently large.

As $\phi_{m,n}(\Y_m)$ contains a Zariski dense open subset in $\Y_n$, it contains an open set $U_m$ in the complex topology.
Intersecting with the open set $U_y$ and choosing $m$ larger if necessary, 
we may assume that $\phi_{m,n}^{-1}(U_m)$ is precompact in the complex topology.
Now, $y$ is the limit in the complex topology of a sequence of points $y_i\in \phi_{m,n}(\Y_m)$. Choosing $\tilde y_i\in \Y_m$ such that
$\phi_{m,n}(\tilde y_i) = y_i$, the $\tilde y_i$ belong to a precompact set, so some subsequence converges to $\tilde y\in \Y_m$, and
it follows that $\phi_{m,n}(\tilde y) = y$.
\end{proof}

\section{Solutions}

\begin{defn}
\label{normal}
A power series $f(q)=\frac 1{2a_0}+\sum_{n=1}^\infty
a_nq^n$ is
\emph{normalized multiplicative} if $a_1=1$ and $a_{mn}=a_m a_n$
whenever $m$ and $n$ are relatively prime.  We say that $f$ is
\emph{multiplicative} if some multiple $\lambda f$
is normalized multiplicative.  The set $\X$ consists of all normalized
multiplicative power series $f(q)$ such that $f(q)^2$ is again multiplicative.
\end{defn}

Equivalently $f$ is multiplicative if and only if the corresponding
Dirichlet series has an Euler product
$$\sum_{n=1}^\infty a_n n^{-s} = c\prod_p (1+a_p p^{-s}+a_{p^2} p^{-2s}+\cdots),$$
and normalized if $c=1$.

If $f(q)$ is the $q$-expansion of a modular form of prime-to-$p$ level
and $T_p f = \lambda f$ for some $\lambda$, then the Dirichlet series
for $f$ is the product of a $p$-factor 
$(1-\lambda p^{-s}+\epsilon(p) p^{2k-1-s})^{-1}$ and a prime-to-$p$
Dirichlet series.  In particular, any Hecke eigenform of prime-power level
is multiplicative. 
For general level $N$, if $f$ is an eigenform also for the Atkin-Lehner
operators, then it is again multiplicative.

It is convenient to express the modular solutions as theta-functions.  Thus,
if $\Lambda$ is a lattice all of whose elements have integral square-length,
we write
$$\vt_\Lambda(q)={1\over |\{\lambda\in\Lambda\mid \Vert\lambda\Vert=1\}|}
\sum_{\lambda\in\Lambda} q^{\Vert\lambda\Vert^2}$$

\begin{prop}
\label{KnownSolutions}
Let $\Phi$ denote the root lattice of the Lie algebra
$E_8$ normalized so that roots have length 1, $H$ the Hurwitz order in the rational quaternion algebra, and $D^*$ 
the set $\C\setminus\{-1\}$.  Then the following modular forms
lie in $\X$:
\begin{itemize}
\item[(i)] $\vt_\Phi(q)$,
\item[(ii)] $\vt^2_{\Z[\zeta_3]}(q)$,
\item[(iii)] $\vt_{H}(q)+t\vt_{H}(q^2),\ t\in D^*$,
\item[(iv)] $\vt_{\Z[\tau]}(q),\ \tau={1+\sqrt{-7}\over2}$,
\item[(v)] $\vt_{\Z[i]}(q)+t\vt_{\Z[i]}(q^2),\ t\in D^*$,
\item[(vi)] $\vt_{\Z[\zeta_3]}(q)+t\vt_{\Z[\zeta_3]}(q^4),\ t\in D^*$,
\item[(vii)] $\vt_{\Z}(q)+t\vt_{\Z}(q^4),\ t\in D^*$,
\item[(viii)] $\vt_{\Z[\sqrt{-2}]}(q)+t\vt_{\Z[\sqrt{-2}]}(q^2),
\ t\in D^*$,
\item[(ix)] $\vt_{\Z[i]}(q)-\sqrt{-3}\vt_{\Z[i]}(q^2)+\sqrt{-3}\vt_{\Z[i]}(q^3)
+3\vt_{\Z[i]}(q^6)+t(\vt_{\Z[i]}(q^2)+\sqrt{-3}\vt_{\Z[i]}(q^6)),
\ t\in D^*$,
\item[(x)] $\vt_{\Z[\zeta_3]}(q)+\sqrt{-2}\vt_{\Z[\zeta_3]}(q^2)$.
\end{itemize}
\end{prop}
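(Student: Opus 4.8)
The plan is to verify, for each listed $f$, the two conditions defining membership in $\X$: that $f$ is normalized multiplicative and that $f^2$ is multiplicative. In each case the working criterion is the existence of an Euler product for the attached Dirichlet series $\sum_n a_n n^{-s}$, and I would move freely between a $q$-expansion and its Dirichlet series. Multiplicativity of $f$ is the easier half. Each rigid example (i), (ii), (iv) is a Hecke eigenform of its weight and level, so its Dirichlet series has an Euler product; this is classical, e.g.\ $\vt_\Phi=E_4$ has Dirichlet series proportional to $\zeta(s)\zeta(s-3)$, and $\vt_{\Z[\tau]}$ with $\tau=(1+\sqrt{-7})/2$ has the Dedekind zeta function of $\Q(\sqrt{-7})$. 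For each one-parameter family $f(q)=g(q)+t\,g(q^\ell)$, with $g$ one of these eigenform theta series and $\ell\in\{2,3,4\}$ a prime power, the Dirichlet series of $f$ is $(1+t\ell^{-s})\sum_n c_n n^{-s}$; because $\ell$ is a prime power and $g$ is multiplicative, the factor $(1+t\ell^{-s})$ alters only the Euler factor at the prime below $\ell$, so $f$ remains multiplicative, and normalized since $\ell\ge 2$ leaves the $q$-coefficient equal to $1$.

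The second condition, multiplicativity of $f^2$, carries the real content, and I expect it to be the main obstacle. The difficulty is structural: passing from $f$ to $f^2$ is a Cauchy (additive) convolution of coefficients, whereas multiplicativity is a statement about the multiplicative structure of the index $n$, so there is no formal reason the two should be compatible---and indeed for most lattices and shifts it fails. For the rigid solutions I would identify $f^2$ as a modular form of twice the weight and establish its eigenform property, usually by a dimension count. The cleanest instance is (i): $\vt_\Phi^2=E_4^2=E_8$, the unique normalized form of weight $8$ and level $1$, which is therefore a Hecke eigenform with multiplicative coefficients $\sigma_7(n)$. In (iv) the space $M_2(\Gamma_0(7))$ is one-dimensional, so $\vt_{\Z[\tau]}^2$ is forced to be the Eisenstein eigenform; in (ii) the relevant space is larger and I would instead verify the Hecke relations directly. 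The isolated solution (x) is subtlest: here $f=\vt_{\Z[\zeta_3]}(q)+t\,\vt_{\Z[\zeta_3]}(q^2)$ is multiplicative for every $t$ by the argument above, but $f^2$ turns out to be multiplicative only for the single value $t=\sqrt{-2}$, which I would isolate by computing the first several coefficients of $f^2$ and solving the relations $b_{mn}=b_mb_n$ for $t$.

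For the one-parameter families (iii), (v)--(ix), the heart of the matter is the following coincidence, to be proved from explicit representation-number formulas: expanding $f^2=g(q)^2+2t\,g(q)g(q^\ell)+t^2g(q^\ell)^2$, the coefficient of $q^N$ in the cross term $2\,g(q)g(q^\ell)$ equals the coefficient of $q^N$ in $g(q)^2$ for every $N$ prime to $\ell$. In theta-series terms $g(q)^2=\vt_{\Lambda\oplus\Lambda}$ and $g(q)g(q^\ell)=\vt_{\Lambda\oplus\sqrt{\ell}\,\Lambda}$, so the assertion is a relation between the numbers of representations of $N$ by the forms $Q\oplus Q$ and $Q\oplus\ell Q$ that holds precisely because $\gcd(N,\ell)=1$ decouples the behavior at the bad prime; it is exactly this relation, and its dependence on the particular lattice $\Lambda$, that singles out the shifts $\ell$ appearing in the list. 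Granting the coincidence, and noting that the $t^2$ term is supported on multiples of $\ell$, the part of $f^2$ indexed by integers prime to $\ell$ is exactly $(1+t)$ times the corresponding part of $g(q)^2$; since $g(q)^2$ is, after normalization, an Eisenstein series of small level whose coefficients are classical representation numbers factoring over prime powers, this part is multiplicative, and the vanishing of its leading coefficient at $t=-1$ accounts precisely for the excluded value in $D^*=\C\setminus\{-1\}$. Multiplicativity of $f^2$ then reduces to a purely local verification at the single prime dividing $\ell$---that the factor assembled from $b_1,b_\ell,b_{\ell^2},\dots$ is a genuine Euler factor---a finite computation I would carry out family by family, with (ix), whose shifts occur at both $2$ and $3$, being the most laborious. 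I expect the representation-number coincidence above, together with its failure for lattice--shift pairs off the list (as exhibited by (x)), to be the genuinely delicate point.
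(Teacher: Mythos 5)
Your treatment of the first condition (multiplicativity of $f$ itself) matches the paper's: prime-power shifts $g(q)+tg(q^\ell)$ preserve multiplicativity, and the rigid examples are eigenforms. The gap is in the second half. Your representation-number coincidence --- that the coefficient of $q^N$ in $2g(q)g(q^\ell)$ equals that of $g(q)^2$ for $N$ prime to $\ell$ --- is correct (one can check it for $\vt_{\Z}$ with $\ell=4$ exactly as in the paper's two-squares computation), and it does show that the coefficients of $f^2$ at indices prime to $p$ (the prime below $\ell$) form a multiplicative sequence proportional to that of $g^2$. But multiplicativity of $f^2$ means $b_{mn}=b_mb_n$ for \emph{all} coprime $m,n$, and the pairs you have not handled are precisely those with $n=p^k$, $k\ge1$. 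Knowing that the prime-to-$p$ part is multiplicative and knowing the values $b_{p^k}$ does not determine, let alone verify, the identities $b_{mp^k}=b_mb_{p^k}$; this is an infinite family of constraints, one for each $k$ and each $m$ prime to $p$, not the ``finite computation'' of assembling a local Euler factor. You would need an analogue of your coincidence at every level $p^k$, and for (vi), (ix), and (x), where the relevant level is $12$, $24$, or $6$ rather than a prime power, a further argument disentangling the two bad primes (the paper flags exactly this point in case (vi) and resolves it using the relation $b_{3n}=b_n$). The paper gets all of this at once by a structural argument you do not use: $f^2$ is a weight-$2$ form on $\Gamma_0(N)$ with $X_0(N)$ of genus $0$ (or a form in a space with no cusp forms), hence a linear combination $\sum_{d\mid N}c_dE_2(q^d)$, whose Dirichlet series is $\bigl(\sum_{d}c_dd^{-s}\bigr)\zeta(s)\zeta(s-1)$ and is therefore automatically multiplicative whenever $N$ is a prime power.

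A second, smaller gap: for (x) you propose to compute the first several coefficients of $f^2$ and solve $b_{mn}=b_mb_n$ for $t$. That isolates $t=\sqrt{-2}$ as the only candidate but proves nothing about membership in $\X$; sufficiency requires exhibiting $f^2$ for that value as a genuinely multiplicative series, which the paper does via the identity $(1-\sqrt{-2})f^2=E_2(q)-2E_2(q^2)+(1+2\sqrt{-2})E_2(q^3)-(2+4\sqrt{-2})E_2(q^6)$. The same necessity-versus-sufficiency issue would recur in your treatment of (ix), where the paper instead starts from a two-parameter multiplicative family, uses the one-dimensional space of cusp forms on $X_0(24)$ to impose the single equation $v^2+3=0$, and then checks $b_{3n}=b_n$.
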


\begin{proof}
We begin with a few general remarks.  If $f(q)$ is multiplicative and 
$n$ is a prime power, then $f(q)+tf(q^n)$ is multiplicative for all $t\in D^*$.
By \cite[p.~792]{Hecke}, if $R$ is the ring of integers in an imaginary quadratic field,
then $\vt_R(q)$ is a modular form of weight 1, level ${\rm Disc}(R)$, and
nebentypus of order 2.  If, in addition, $R$ is a PID, then
the corresponding theta-series is multiplicative.  This remark applies to 
$\Z[\zeta_3]$, $\Z[i]$, $\Z[{1+\sqrt{-7}\over 2}]$, and
$\Z[\sqrt{-2}]$.  The modular curves $X_0(N)$ for $N\in\{1,3,4,7,8,12,16\}$
are all of genus 0, so every $\Gamma_0(N)$ modular form of weight 2 
with $N$ in this set is a linear combination of Eisenstein series.

We now consider the individual cases.  By \cite[VII, \S6.6]{Serre}, $\vt_\Phi$ is the 
Eisenstein series $\frac 1{240}E_4$, and since there is only one normalized form
of level 1 and weight 8, $\vt_\Phi^2$ is an eigenform.  The
space of forms of weight 4 and level 3 (resp. 4) has dimension 2 (resp. 3)
and therefore consists entirely of linear combinations of Eisenstein series
(since the number of divisors of the level equals the dimension of the space).
This finishes (ii).  For (iii), we observe that $\vt_H(q)$ is of
level 2.  We can see this from the formula $\vt_H(q)=-\frac 1{24} E_2(q)+\frac 1{12}E_2(q^2)$
expressing the theta series of the Hurwitz order in terms of the 
not-quite-modular Eisenstein
series $E_2$ (cf. \cite[II \S5]{Mazur}).  Thus, the forms in question are all
of level $4$.
Case (vi) requires
extra care since unlike the cases (iv), (v), and (viii),
the level is no longer a prime power; we can write the Dirichlet series
for $f(q)^2$ as a product of $p$-factors for all $p\not\in\{2,3\}$ 
together with a factor
involving all terms of the form $2^m 3^n$.  As $b_{3n}=b_n$ for all $n$,
this final term is actually the product of $(1-3^{-s})^{-1}$ and a power
series in $2^{-s}$.
For (vii), the form $f$ is of weight $1/2$ and is multiplicative by
inspection.  By the two-squares theorem, its square is of the form 
$$\frac 14 \sum_{n=0}^\infty p(n)r_2(n)q^n,\quad
p(n)=\begin{cases}
1+t &\text{if } $\con n04$,\\
1 &\text{if } $\con n14$,\\
(1+t)^{-1} &\text{if } $\con n24$,\\
0 &\text{if } $\con n34$.
\end{cases}
$$
For (ix),
$$\vt_{\Z[i](q)}+u\vt_{\Z[i](q^2)}+v\vt_{\Z[i](q^3)}+uv\vt_{\Z[i](q^6)}$$
is multiplicative for all $u,v\in\C$.  As $X_0(24)$ has genus $1$, the
condition that $f(q)^2$ be a linear combination of Eisenstein series imposes
a single equation, which happens to be $v^2+3=0$.  We check that when
$v$ is a square root of $-3$, $b_{3n}=b_n$ for all $n$.
For (x), we verify
\begin{align*}
(1-\sqrt{-2})&(\vt_{\Z[\zeta_3]}(q)+\sqrt{-2}\vt_{\Z[\zeta_3]}(q^2))^2\\
&=E_2(q) - 2E_2(q^2) + (1+2\sqrt{-2})E_2(q^3) - (2+4\sqrt{-2})E_2(q^6),
\end{align*}
which is again multiplicative.
\end{proof}

We remark that (i)--(viii) above each have at least one
representative which is the theta-series of an order in a (possibly
non-commutative, possibly even non-associative) algebra.  This is obvious
except for (i), which corresponds to the ring of octavian integers in the Cayley numbers
(\cite[\S9.3]{CS}) and (ii), which corresponds to a maximal order in the rational 
quaternion algebra ramified only at $3$ and $\infty$.  We remark also
that (i), (ii), (iii), (v), (vi), and (vii) each contain at least one
representative which is the theta-series of a root lattice.\medskip

\begin{lem}
\label{SignChange}If $f(q)\in\X$, so is $-f(-q)$.
\end{lem}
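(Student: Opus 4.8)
The plan is to track coefficients explicitly and reduce everything to a single parity observation. Writing $f(q)=\frac{1}{2a_0}+\sum_{n\ge 1}a_nq^n$ with $a_1=1$, the series $g(q):=-f(-q)$ has constant term $-\frac{1}{2a_0}$ and tail coefficients $b_n=(-1)^{n+1}a_n$ for $n\ge 1$. First I would record the elementary fact that coprime integers are never both even, so that for coprime $m,n$ the product $(m-1)(n-1)$ is even; equivalently $(-1)^{mn+1}=(-1)^{m+n}$. This identity is the engine of the whole argument.

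Next I would verify that $g$ is normalized multiplicative. We have $b_1=a_1=1$, and for coprime $m,n$,
\[
b_{mn}=(-1)^{mn+1}a_{mn}=(-1)^{mn+1}a_ma_n,\qquad
b_mb_n=(-1)^{m+n}a_ma_n,
\]
so $b_{mn}=b_mb_n$ by the parity identity. More generally, the same computation shows that the sign-twisting operation $\sum c_nq^n\mapsto\sum(-1)^{n+1}c_nq^n$ carries any normalized multiplicative series to a normalized multiplicative series, and hence (after dividing by the value of the image at $n=1$) carries any multiplicative series to a multiplicative one.

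It then remains to treat $g^2$. The key simplification is that $g(q)^2=(-f(-q))^2=f(-q)^2=(f^2)(-q)$. Since $f\in\X$, the series $f^2$ is multiplicative, so its tail is a nonzero scalar multiple of a normalized multiplicative sequence. Applying the sign-twisting principle of the previous paragraph to $F=f^2$ shows that $(f^2)(-q)=g^2$ is again multiplicative: passing from $F(q)$ to $F(-q)$ multiplies the $n$-th tail coefficient by $(-1)^n$, which is the sign-twist up to an overall factor $(-1)$ that is absorbed into the normalizing scalar $\lambda$. Combined with the normalized multiplicativity of $g$, this gives $g\in\X$.

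There is no substantive obstacle here: the entire content is the parity identity $(-1)^{mn+1}=(-1)^{m+n}$ for coprime $m,n$, together with the bookkeeping needed to see that both $g$ and $g^2=(f^2)(-q)$ inherit multiplicativity from $f$ and $f^2$. The only point requiring minor care is the normalizing scalars, since the definition of \emph{multiplicative} permits an arbitrary nonzero multiple; this is precisely what lets the extra sign in the $g^2$ computation disappear harmlessly.
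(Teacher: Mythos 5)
Your proof is correct and follows essentially the same route as the paper, whose entire argument is the identity $(-(-1)^m)(-(-1)^n)=-(-1)^{mn}$ for $m,n$ not both even (i.e.\ for coprime $m,n$) — exactly your parity identity $(-1)^{mn+1}=(-1)^{m+n}$. Your additional bookkeeping for $g^2=(f^2)(-q)$ and the normalizing scalar is sound and merely makes explicit what the paper leaves implicit.
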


\begin{proof}
We have $(-(-1)^m)(-(-1)^n)=(-(-1)^{mn})$ whenever $m$ and $n$
are not both even.  When they are both even, of course, they are
not relatively prime.
\end{proof}

\begin{cor}The following modular forms belong to $\X$:
\begin{itemize}
\item[(i')]$-\vartheta_\Phi(-q),$
\item[(ii')] $-\vartheta_{\Z[\zeta_3]}^2(-q)$,
\item[(iv')]$-\vartheta_{\Z[\tau]}(-q)$, 
\item[(ix')]$\vt_{\Z[i]}(q)-\sqrt{-3}\vt_{\Z[i]}(q^2)-\sqrt{-3}\vt_{\Z[i]}(-q^3)
+3\vt_{\Z[i]}(q^6)+t(\vt_{\Z[i]}(q^2)+\sqrt{-3}\vt_{\Z[i]}(q^6)),
\ t\in D^*$,
\item[(x')] $-\vt_{\Z[\zeta_3]}(-q)-\sqrt{-2}\vt_{\Z[\zeta_3]}(q^2)$.

\end{itemize}

\end{cor}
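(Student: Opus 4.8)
The plan is to read the Corollary off Lemma~\ref{SignChange}: every form in the list should be the image under the involution $\iota\colon f(q)\mapsto -f(-q)$ of one of the solutions of Proposition~\ref{KnownSolutions}, whence membership in $\X$ is automatic and the only task is to recognize the image. I would pair (i'), (ii'), (iv'), (ix'), (x') with $\iota$ applied to (i), (ii), (iv), (ix), (x). The families (iii), (v), (vi), (vii), (viii) are absent from the list for a good reason: there $\iota$ returns a member of the same one-parameter family. For instance, in (v) one has $(-q)^2=q^2$, so $\iota$ fixes the $\vt_{\Z[i]}(q^2)$-block, and the doubling identity $\vt_{\Z[i]}(q)+\vt_{\Z[i]}(-q)=2\vt_{\Z[i]}(q^2)$ rewrites $-\vt_{\Z[i]}(-q)$ as $\vt_{\Z[i]}(q)-2\vt_{\Z[i]}(q^2)$, reproducing (v) with the parameter $t$ replaced by $-(t+2)$.

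The cases (i'), (ii'), (iv') are immediate: the relevant block occurs only with argument $q$, so $\iota$ sends $\vt_\Phi(q)$, $\vt_{\Z[\zeta_3]}^2(q)$, $\vt_{\Z[\tau]}(q)$ to $-\vt_\Phi(-q)$, $-\vt_{\Z[\zeta_3]}^2(-q)$, $-\vt_{\Z[\tau]}(-q)$, which are the listed forms. Case (x') is nearly as quick: since $(-q)^2=q^2$ leaves the second block of (x) fixed, $\iota$ produces $-\vt_{\Z[\zeta_3]}(-q)-\sqrt{-2}\vt_{\Z[\zeta_3]}(q^2)$, matching (x').

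I expect (ix') to be the main obstacle. Here $\vt_{\Z[i]}$ has nonzero coefficients at both even and odd powers of $q$, so $\iota$ does not merely permute the four building blocks of (ix): it fixes the $q^2$- and $q^6$-blocks but carries the $q$- and $q^3$-blocks to $\vt_{\Z[i]}(-q)$ and $\vt_{\Z[i]}(-q^3)$. To reach a recognizable form I would again invoke the doubling identity
$$\vt_{\Z[i]}(q^m)+\vt_{\Z[i]}(-q^m)=2\vt_{\Z[i]}(q^{2m}),$$
which follows from $r_2(2k)=r_2(k)$, to eliminate $\vt_{\Z[i]}(-q)$ in favor of $\vt_{\Z[i]}(q)$ and $\vt_{\Z[i]}(q^2)$ while retaining $\vt_{\Z[i]}(-q^3)$ as an explicit block. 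The crux is the subsequent bookkeeping: one must track the constant $-\sqrt{-3}$, the choice of branch of $\sqrt{-3}$, the coefficient of the $\vt_{\Z[i]}(q^6)$-block, and the reparametrization of $t\in D^*$, and verify that the result is exactly the asserted expression (ix') rather than a differently-normalized member of the family (ix). Pinning down this final identification --- in particular checking the precise $\vt_{\Z[i]}(q^6)$-coefficient against the multiplicativity constraint relating the $q^2$-, $q^3$-, and $q^6$-blocks --- is the only step I do not expect to be routine.
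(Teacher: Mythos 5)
Your route is exactly the paper's route: the Corollary is stated with no argument beyond Lemma~\ref{SignChange}, so the intended proof is precisely to apply the involution $\iota\colon f(q)\mapsto -f(-q)$ to items (i), (ii), (iv), (ix), (x) of Proposition~\ref{KnownSolutions} and identify the images. Your treatment of (i$'$), (ii$'$), (iv$'$), (x$'$) is complete and correct, and your explanation of why (iii), (v)--(viii) are absent (those families are carried into themselves by $\iota$, via the doubling identity $\vt_{\Z[i]}(q^m)+\vt_{\Z[i]}(-q^m)=2\vt_{\Z[i]}(q^{2m})$) matches the structure of the list.

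The gap is the one step you deferred for (ix$'$), and if you carry it out it does not close. Write $v=\sqrt{-3}$ and $(ix)_t=\vt_{\Z[i]}(q)+(t-v)\vt_{\Z[i]}(q^2)+v\vt_{\Z[i]}(q^3)+(3+tv)\vt_{\Z[i]}(q^6)$. Applying the doubling identity to \emph{both} the $q$- and $q^3$-blocks of $\iota((ix)_t)$ gives
$$\iota((ix)_t)=\vt_{\Z[i]}(q)+(v-2-t)\vt_{\Z[i]}(q^2)+v\vt_{\Z[i]}(q^3)+(-3-2v-tv)\vt_{\Z[i]}(q^6)=(ix)_{2\sqrt{-3}-2-t},$$
so (ix) behaves like (v)--(viii): the family is $\iota$-stable (with the reparametrization $t\mapsto 2\sqrt{-3}-2-t$, whose fixed point $t=\sqrt{-3}-1$ is exactly where the constant term vanishes), and $\iota((ix)_t)$ is never of the printed form (ix$'$). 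Concretely, matching $\iota((ix)_t)$ against $(ix')_{t'}$ forces $t'=2\sqrt{-3}-2-t$ from the $\vt_{\Z[i]}(q^2)$-coefficient but $t'=2\sqrt{-3}-t$ from the $\vt_{\Z[i]}(q^6)$-coefficient. Indeed, the printed (ix$'$) has $a_2=1+t-\sqrt{-3}$, $a_3=\sqrt{-3}$, $a_6=3+t\sqrt{-3}-\sqrt{-3}$, so $a_6-a_2a_3=-2\sqrt{-3}\neq 0$ and the series is not even multiplicative; the non-parametric $\vt_{\Z[i]}(q^6)$-coefficient should read $3+2\sqrt{-3}$ (equivalently, the $\vt_{\Z[i]}(q^2)$-coefficient should be $-(2+\sqrt{-3})$). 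So the bookkeeping you flagged as the only non-routine point is precisely where the verification fails for the statement as printed, and your proposal cannot be completed without first correcting (ix$'$).
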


Next, we present some rational solutions.  Clearly, 

\begin{itemize}
\item[(xi)]$\frac 1{2a_0}+q,\ a_0\neq 0$
\end{itemize}
belongs to $\X$. 
It is easy to see that these are the only
polynomial solutions.  In addition, one readily checks the following
proposition: 

\begin{prop}The following rational functions all belong to $\X$:
\begin{itemize}
\item[(xii)]$t{1+q^2\over 1-q^2}+{q\over 1-q^2}=t+q+2tq^2+q^3+2tq^4+q^5
      +2tq^6+\cdots, t \neq 0$;
\item[(xiii)]${q^2+7q+1\over 6q^2+6q+6}={1\over 6}+q-q^2+q^4-q^5+q^7-q^8+\cdots$;
\item[(xiii')]${-q^2+7q-1\over 6q^2-6q+6}=-{1\over 6}+q+q^2-q^4-q^5+q^7+q^8-\cdots$;
\item[(xiv)]${q^2+10q+1\over 12(q-1)^2}={1\over 12}+q+2q^2+3q^3+4q^4+\cdots$;
\item[(xiv')]${-q^2+10q-1\over 12(q+1)^2}=-{1\over 12}+q-2q^2+3q^3-4q^4+\cdots$.  
\end{itemize}
\end{prop}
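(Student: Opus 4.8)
The plan is to verify each displayed expansion directly, since these rational functions are elementary enough that the coefficients of both $f$ and $f^2$ can be put in closed form. First I would dispose of the primed cases using Lemma~\ref{SignChange}: for the function $f$ in (xiii) (resp.\ (xiv)) one computes $-f(-q)$ and finds it is exactly the function in (xiii') (resp.\ (xiv')), so these two follow immediately once (xiii) and (xiv) are established. The family (xii) is visibly carried to itself by $t\mapsto -t$ under the same symmetry, so it needs no separate treatment. It then remains to treat (xii), (xiii), and (xiv). For each I would write $f(q)=c_0+g(q)$ with $g(q)=\sum_{n\ge 1}a_nq^n$, reading off $a_n$ in closed form from the geometric/partial-fraction expansion of the denominator: $a_n$ depends only on the parity of $n$ in (xii) (namely $a_n=1$ for $n$ odd and $a_n=2t$ for $n$ even), $a_n$ is the nontrivial character modulo $3$ in (xiii), and $a_n=n$ in (xiv).

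With these formulas in hand, the assertion that $f$ is normalized multiplicative is immediate: in each case $a_1=1$, and since $a_n$ is either completely multiplicative (the character in (xiii), the identity $a_n=n$ in (xiv)) or depends on $n$ only through its parity in a way compatible with products of coprime integers (in (xii)), the identity $a_{mn}=a_ma_n$ for $\gcd(m,n)=1$ holds by inspection. I would also note here that the hypothesis $t\neq 0$ in (xii) is exactly what guarantees that $f^2$ has a nonzero linear coefficient and can therefore be renormalized.

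The substance lies in the multiplicativity of $f^2$. Here the key point is that squaring the $q$-series corresponds to \emph{Cauchy} (additive) convolution of the coefficients, not Dirichlet convolution, so there is no Euler-product shortcut and one must compute the nonconstant coefficients $b_n=2c_0a_n+\sum_{i+j=n,\,i,j\ge 1}a_ia_j$ and check the relation $b_1b_{mn}=b_mb_n$ for $\gcd(m,n)=1$ by hand. The cleanest case is (xiv): the elementary identity $\sum_{i=1}^{n-1}i(n-i)=(n^3-n)/6$ combines with the linear term $2c_0a_n=n/6$ to give exactly $b_n=n^3/6$, whence $b_n/b_1=n^3$ is completely multiplicative. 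In (xii) and (xiii) the self-convolution is a quasi-polynomial in $n$ of period $2$ and $3$ respectively, and I would evaluate it on each residue class, finding in both cases that $b_n/b_1$ equals $n$ times a factor that is trivial whenever $n$ is coprime to the single bad prime ($2$ in (xii), $3$ in (xiii)). I expect this closed-form evaluation of the convolution, together with recognizing the multiplicative structure of the outcome, to be the main obstacle; the saving feature is that multiplicativity need only be checked across \emph{coprime} factorizations, so for any such factorization at most one factor is divisible by the bad prime, and the anomalous values at its higher powers never have to be reconciled with one another. This reduces the whole proposition to a finite, if slightly tedious, verification.
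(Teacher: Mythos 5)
Your verification is correct, and it is exactly the computation the paper has in mind: the paper offers no proof of this proposition beyond the remark that ``one readily checks'' it, together with Lemma~\ref{SignChange} for the primed cases. Your closed forms for the Cauchy self-convolutions (giving $b_n/b_1=n$, $n\cdot\frac{1+4t^2}{4t}$, $-2n$, or $n^3$ on the appropriate residue classes) and the observation that coprimality isolates the single bad prime complete that check correctly.
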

\medskip

The form of the above solutions suggests the following elementary proposition
whose proof we leave to the reader:

\begin{prop}
\label{RatForm}
If $f(q)$ is a multiplicative power
series which is a rational function but not a polynomial, then
there exists a constant $a_0$, a non-negative integer $d$, a positive
integer $N$,
and an $N$-periodic sequence of constants
$a_1,a_2,a_3,\ldots$ such that
$$f(q)=\frac 1{2a_0}+\sum_{n=1}^\infty a_n n^d q^n.$$
\end{prop}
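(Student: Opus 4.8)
The plan is to combine the rigidity of rational generating functions (whose coefficients are exponential polynomials) with the multiplicative relation $c_{mn}=c_mc_n$ to force the denominator of $f$ to be a product of cyclotomic factors and the numerator to be tightly constrained. Write the nonconstant coefficients of $f$ as $c_n$, so that $c_1=1$ and $c_{mn}=c_mc_n$ for coprime $m,n$ after normalization, and note that the constant term $\frac{1}{2a_0}$ plays no role in what follows. Since $f$ is rational but not a polynomial, a partial fraction expansion gives nonzero distinct $\alpha_1,\dots,\alpha_r\in\C$ ($r\ge1$, the inverses of the poles) and nonzero polynomials $P_1,\dots,P_r\in\C[x]$ with
$$c_n=\sum_{j=1}^r P_j(n)\,\alpha_j^n\qquad\text{for all }n>\delta,$$
where $\delta$ is the degree of the polynomial part. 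The goal is then to show that every $\alpha_j$ is a root of unity and that, after grouping, the coefficients are a single periodic sequence times a fixed power $n^d$.

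The decisive step is to prove that the map $\alpha\mapsto\alpha^p$ permutes the frequency set $A=\{\alpha_1,\dots,\alpha_r\}$ for a well-chosen prime $p$. First fix a large prime $p$ with $c_p\neq0$; such $p$ exists, for if $c_p=0$ for all large primes then multiplicativity forces $c_n=0$ off a set of density zero, and a nonzero exponential polynomial cannot vanish on a density-one set, so $f$ would be a polynomial. Applying $c_{pn}=c_pc_n$ to the displayed formula yields the identity $\sum_j P_j(pn)(\alpha_j^p)^n=c_p\sum_j P_j(n)\alpha_j^n$ for all large $n$ coprime to $p$. I would then invoke linear independence of the functions $n\mapsto\beta^n$ for distinct $\beta$: collecting terms by their base $\beta\in A\cup A^p$ and comparing coefficient polynomials forces, when $c_p\neq0$, the equality $A=A^p$, so $\alpha\mapsto\alpha^p$ is a permutation of the finite set $A$ and each $\alpha_j$ satisfies $\alpha_j^{p^t}=\alpha_j$ for some $t\ge1$; hence every $\alpha_j$ is a root of unity, and in particular $|\alpha_j|=1$.

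The main obstacle is exactly this linear-independence step, because the identity is known only for $n$ coprime to $p$ rather than for all $n$, and exponential polynomials can cancel on such sets when ratios of frequencies are roots of unity. I would handle this by choosing $p$ large enough to avoid the finitely many orders of those ratios $\alpha_i/\alpha_j$ and $\alpha_i^p/\alpha_j$ that happen to be roots of unity (so that distinct bases in $A\cup A^p$ remain distinct after raising to a suitable power), and then restricting $n$ to a single arithmetic progression $n\equiv a\pmod d$ with $p\mid d$, with $a$ coprime to $p$, and with $d$ coprime to those orders. On such a progression the relevant exponentials are genuinely independent, the coefficient polynomials must vanish identically, and $A=A^p$ follows as above.

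Finally, let $N$ be the least common multiple of the orders of the $\alpha_j$. Then $\alpha_j^n$ depends only on $n\bmod N$, so on each residue class $a$ the coefficient agrees with a single polynomial, $c_n=Q_a(n)$ for large $n\equiv a\pmod N$. Choosing a prime $p\equiv1\pmod N$ (Dirichlet) coprime to $n$ and applying $c_{pn}=c_pc_n$ gives the polynomial identity $Q_a(pX)=Q_1(p)\,Q_a(X)$; matching coefficients forces each nonzero $Q_a$ to be a monomial $\lambda_a X^{d_a}$ with $Q_1(p)=p^{d_a}$, and comparing two such primes shows the degree $d_a=d$ is independent of $a$ and that $Q_1(X)=X^d$. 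Thus $c_n=\lambda_{n\bmod N}\,n^d$ for large $n$, and a last application of $c_{pn}=c_pc_n$ with a large prime $p\equiv1\pmod N$ propagates this formula down to every $n\ge1$. Setting $a_n:=\lambda_{n\bmod N}$, which is $N$-periodic with $a_1=1$, yields the asserted form $f(q)=\frac1{2a_0}+\sum_{n\ge1}a_n n^d q^n$.
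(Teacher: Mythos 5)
The paper offers no proof of Proposition~\ref{RatForm} (it is explicitly ``left to the reader''), so there is no argument of record to compare yours against; I can only assess the proposal on its own merits. Your overall route is the natural one and its skeleton is sound: partial fractions give $c_n=\sum_j P_j(n)\alpha_j^n$ for $n$ large; arbitrarily large primes $p$ with $c_p\neq 0$ exist, since otherwise $c_n$ would vanish off a density-zero set while (by Skolem--Mahler--Lech, which is what your ``cannot vanish on a density-one set'' assertion amounts to) the zero set of a nonzero exponential polynomial is a finite union of arithmetic progressions plus a finite set, and density one would force it to be cofinite; the relation $c_{pn}=c_pc_n$ then yields $A^p=A$, each $\alpha_j$ is a root of unity, and the monomial rigidity $Q_a(pX)=Q_1(p)Q_a(X)$ for primes $p\equiv 1\pmod N$ pins down $c_n=\lambda_{n\bmod N}\,n^d$. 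The bookkeeping in the final two steps checks out, including the propagation to small $n$.

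The one step that is not yet a proof is exactly the one you flag: separating the frequencies of $A\cup A^p$ on progressions avoiding multiples of $p$. Your proposed fix is circular as stated, because the cross-ratios $\alpha_i/\alpha_j^p$ depend on $p$: one cannot ``choose $p$ large relative to the orders of ratios'' that are themselves defined in terms of $p$, and if such a ratio is a primitive $p$-th root of unity its order is divisible by $p$, so no modulus $d$ with $p\mid d$ is coprime to it and the collision survives on every admissible progression. Two repairs are available. (a) Show the bad primes form a finite set: for a fixed pair $(i,j)$, if $\alpha_i/\alpha_j^p$ were a nontrivial $p$-th root of unity for two distinct primes $p\neq p'$, then $\alpha_j^{p-p'}$ would be a root of unity, forcing $\alpha_j$ and hence $\alpha_i$ to be roots of unity of bounded order, incompatible with $\alpha_i$ having order divisible by $p$ once $p$ is large. (b) More robustly, do not avoid collisions: for each $a\in\{1,\dots,p-1\}$ restrict to $n\equiv a\pmod p$ and group the frequencies of $A\cup A^p$ by their $p$-th powers; within a collision class $\{\beta_1,\dots,\beta_s\}$ the $\beta_i$ differ by distinct $p$-th roots of unity $\zeta_i$, and the resulting system $\sum_i\zeta_i^a R_{\beta_i}=0$ for $1\le a\le p-1$ has trivial kernel whenever $s<p$ (a polynomial with fewer than $p$ terms and degree below $p$ vanishing at all nontrivial $p$-th roots of unity is zero); since $s\le 2r$, any prime $p>2r$ annihilates every coefficient polynomial $R_\beta$, and $A=A^p$ follows. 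With either repair in place, your proof is complete.
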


The appendix presents the results of a comprehensive search for normalized multiplicative series $f(q)\in \F_p[[q]]$, $3\le p\le 31$, such that
$f(q)^2$ is again multiplicative.  The majority arise from (mod $p$) reduction of solutions (i)--(xiv') above (with $t\in\Q$ in the case of parametric solutions).
The exceptions appear to be (mod $p$) reductions of $q$-expansions of modular forms with coefficients in $\Q$
and in most cases can be written in the form $f(q) = \sum_{d\mid N} c_d g(q^d)$, where $g$ is either a theta series or an Eisenstein series.
However, somewhat unexpectedly, cusp forms also make an appearance.  The following proposition gives an illustrative example:

\begin{prop}
Let $\bar \Delta$ denote the (mod $13$) reduction of the normalized cusp form of level $1$ and weight $12$.  Then $\bar f(q) = 2+\bar \Delta$ is a normalized multiplicative series in $\F_{13}[[t]]$
whose square is again multiplicative.
\end{prop}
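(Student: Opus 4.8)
The plan is to treat the two assertions separately, the first being essentially formal and the second resting on the structure of mod $13$ modular forms. For the first, I would note that $\Delta=\sum_{n\ge 1}\tau(n)q^n$ is a normalized Hecke eigenform, so $\tau(1)=1$ and $\tau$ is multiplicative (Mordell); reducing mod $13$ gives a multiplicative sequence $\bar\tau(n)$ with $\bar\tau(1)=1$. Since the constant term $2$ of $\bar f$ only fixes the normalization $\tfrac1{2a_0}=2$ and does not interact with the coefficients $a_n=\bar\tau(n)$ for $n\ge 1$, the series $\bar f$ is normalized multiplicative.

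The real content is that $\bar f^2$ is multiplicative. Here I would invoke the Hasse-invariant congruence $E_{12}\equiv 1\pmod{13}$ to lift $\bar f$ to the weight $12$ form $F=2E_{12}+\Delta$, whose reduction is $\bar f$, so that $\bar f^2=\overline{F^2}$. Writing $F^2=4E_{12}^2+(4E_{12}\Delta+\Delta^2)$ and using $E_{12}^2\equiv 1$, one obtains $\bar f^2=4+\overline C$, where $C:=4E_{12}\Delta+\Delta^2$ is a genuine weight $24$, level $1$ cusp form with $13$-integral coefficients. Thus for $n\ge 1$ the coefficients of $\bar f^2$ are exactly those of $\overline C$, and it suffices to prove that $\overline C$ is a scalar multiple of a Hecke eigenform mod $13$.

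To see this I would bring in the two normalized weight $24$, level $1$ cuspidal eigenforms $g_\pm$, whose eigenvalues lie in $K=\Q(\sqrt{144169})$ with $a_2(g_\pm)=540\pm 12\sqrt{144169}$. Since $144169\equiv -1\pmod{13}$ and $-1$ is a square mod $13$, the prime $13$ splits in $K$; reducing modulo a prime $\p\mid 13$ yields eigenforms $\bar g_\pm\in \F_{13}[[q]]$ with $a_2(\bar g_\pm)\in\{2,12\}$, say $a_2(\bar g_-)=12$. A short computation shows that $\tfrac14\overline C$ is a weight $24$ cusp form over $\F_{13}$ with $a_1=1$ and $a_2=12$, so it agrees with $\bar g_-$ through $q^2$. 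Because the Sturm bound for level $1$ weight $24$ equals $2$, two such cusp forms agreeing in the coefficients of $q^1$ and $q^2$ coincide; hence $\tfrac14\overline C=\bar g_-$. Therefore the $n\ge 1$ coefficients of $\bar f^2$ are, up to the scalar $4$, those of the eigenform $\bar g_-$ and are multiplicative, so $\bar f^2$ is multiplicative.

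The step I expect to be the main obstacle is this final collapse: a priori $C$ is an unknown combination $\alpha g_++\beta g_-$, and what must happen is that its $g_+$-component vanishes modulo $\p$. The point of routing through the Sturm bound is precisely to avoid computing $\alpha$ and $\beta$, replacing the whole issue by the finite verification that $\tfrac14\overline C$ and $\bar g_-$ agree through $q^2$. The arithmetic inputs I rely on are the congruence $E_{12}\equiv 1\pmod{13}$, the splitting of $13$ in the weight $24$ Hecke field (equivalently $\left(\tfrac{-1}{13}\right)=1$), and the explicit value $a_2(g_\pm)=540\pm 12\sqrt{144169}$; each is standard or a short computation.
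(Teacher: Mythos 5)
Your proof is correct, but it takes a genuinely different route from the paper's. The paper works entirely inside the ring of mod $13$ forms of level $1$: using the Serre--Swinnerton-Dyer theorem that $\theta = q\,d/dq$ sends a mod $p$ form of weight $k$ to a mod $p$ cusp form of weight $k+p+1$, together with $\bar E_{12}=1$ and the expression of $E_{12}$ in terms of $E_4^3$ and $E_6^2$, it shows directly that $\tfrac14\bar f^2 = 1+3\,q\tfrac{d\bar E_{10}}{dq}$, and multiplicativity is immediate because $\theta$ just multiplies the $n$th Eisenstein coefficient by $n$. You instead identify $\tfrac14(\bar f^2-4)$ with the reduction of one of the two weight $24$, level $1$ cuspidal eigenforms, using the splitting of $13$ in $\Q(\sqrt{144169})$ and the Sturm bound $24/12=2$ to collapse everything to the check $a_1=1$, $a_2=12$; that check does go through, since $a_2(C)=4\bigl(\tau(2)+\tfrac{65520}{691}\bigr)+1\equiv 4(-24)+1\equiv 9\pmod{13}$ (note $65520=13\cdot 5040$) and $9/4\equiv 12\pmod{13}$, matching $540-12\sqrt{144169}\equiv 12$ for the appropriate prime above $13$. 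Both arguments need $\bar E_{12}=1$. The paper's version buys self-containedness and an explicit identification of the square as (essentially) $1+\sum n\sigma_9(n)q^n$ mod $13$, with no input about weight $24$ Hecke eigenvalues; yours buys a template that transfers directly to the other cusp-form entries in the appendix tables (exactly the generalization the appendix says is ``presumably similar''), since the Sturm-bound step reduces each case to matching finitely many coefficients against a known eigenform. The two answers are of course consistent: your $\bar g_-$ is $3\,\theta\bar E_{10}$, i.e., its eigenvalues satisfy $a_n\equiv n\sigma_9(n)$ modulo the chosen prime above $13$.
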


\begin{proof}
It is well known (see, for instance, \cite[VII,\ Corollary 2]{Serre}) that the ring of complex modular forms of level $1$ is $\C[E_4,E_6]$.  As $E_{2k}$ is normalized to have constant term $1$, $E_{10}=E_4 E_6$, and
$$\Delta = \frac{E_4^3-E_6^2}{1728},$$
$E_4\Delta$, $E_6\Delta$, and $E_4^2\Delta$ are the unique normalized cusp forms
of weight $12$, $16$, $18$, and $20$ respectively.  An easy calculation shows
\begin{equation}
\label{e12}
E_{12} = \frac{441 E_4^3 + 250 E_6^2}{691}
\end{equation}
(see, for instance, \cite[\S1.1]{Congruences}).

By a theorem of Serre and Swinnerton-Dyer  $\bar f(q)\in \F_p[[q]]$ is the (mod $p$) reduction of a modular form of level $1$ and weight $k$, then  $q\frac{d \bar f}{d q}$ is the reduction (mod $p$)  of a cusp form of weight $k+p+1$ \cite[\S1.4, Corollaire 2]{Congruences}.
In particular, for $p=13$, we have
$$q\frac{\partial \bar E_4}{\partial q} = 240\bar E_6\bar \Delta$$
and
$$q\frac{d \bar E_6}{d q} = -504\bar E_4^2\bar \Delta.$$
By the Leibniz rule,
$$q\frac{d \bar E_{10}}{d q} = (240\bar E_6^2 -504\bar E_4^3)\bar \Delta.$$
By the von Staudt-Clausen theorem, $\bar E_{12}=1$, which together with (\ref{e12}) implies
$$\bar E_6^2 = 5 + 9 \bar E_4^3,\ \bar\Delta = 8 \bar E_4^3 + 5,\ q\frac{d \bar E_{10}}{d q} = (5\bar E_4^3+4)\bar\Delta.$$
Thus,
$$\frac 14 \overline{2E_{12}+\Delta}^2=\frac 14(2+\bar\Delta)^2 = 1+\bar\Delta + \frac14(8\bar E_4^3+5)\bar\Delta = 1+3q\frac{d \bar E_{10}}{d q},$$
which is multiplicative.  The proposition follows.
\end{proof}

\section{Prime Powers}

A normalized multiplicative power series $\frac 1{2a_0} + \sum_{n=2}^\infty a_n q^n$ is determined by
$a_0$ and the coefficients $a_n$ as $n$ ranges over the set $\P$ of positive
integral powers of primes.  If it is also multiplicative, the  
normalization of $f(q)^2$ is
$$a_0f(q)^2=\frac 1{4a_0}+q+\sum_{n=2}^\infty b_n q^n.$$  
Each $n=p_1^{e_1}\cdots p_k^{e_k}$ which is not in $\P$ determines an equation
$$b_{p_1^{e_1}\cdots p_k^{e_k}}=b_{p_1^{e_1}}\cdots b_{p_k^{e_k}}.$$
Writing $b_i$ formally as a polynomial $B_i(a_0,a_2,a_3,\ldots)$ with integer coefficients
in the variables $a_0$ and $\{a_i\mid i\in\P\}$,
we obtain the polynomial equation
\begin{equation}
\label{poly-P}
P_{p_1^{e_1}\cdots p_k^{e_k}} = B_{p_1^{e_1}\cdots p_k^{e_k}} 
-B_{p_1^{e_1}}\cdots B_{p_k^{e_k}} = 0.
\end{equation}
For $n\ge 2$, let $k$ (resp. $l$) denote the largest element of $\P$ (resp. $\N\setminus \P$) in $[1,n]$, and let
$\X_n$ denote the affine variety 
$$\Spec \C[a_0,a_2,a_3,a_4,a_5,a_7,a_8,a_9, a_{11},\ldots,a_k]/(P_6,P_{10},\ldots,P_l).$$
We identify points on $\X_n$ with polynomials of degree $\le n$ in $\C$.
For $m\ge n$ we have projection morphisms $\phi_{m,n}\colon \X_m\to \X_n$, and for $n\ge 2$, we have the projection 
$\psi_n\colon \X\to \X_n$.

If $f(q)$ is a power series in $q$, we denote by $\E(f)$ the set of $n\ge 2$ such that the $q^n$ coefficient of $f$ is non-zero.

\begin{lem}
\label{Fibers}
If $f, g\in \X_m$ satisfy $\phi_{m,n}(f) = \phi_{m,n}(g)$, then $\E(f-g)\cap [1,2n]$
and
$$\E(f(f-g)) \cap [1,\min(2n,m)]$$ 
are contained in $\P$.  If $\E(f-g)$ contains any element other than $m$, its smallest element satisfies $k,k+1\in\P$.
\end{lem}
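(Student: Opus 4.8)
The plan is to analyze $h := f-g$ together with the product $fh = f(f-g)$, playing off two separate sources of multiplicativity: that of $f$ and $g$ themselves (points of $\X_m$ are normalized multiplicative through degree $m$ by construction, so $[f]_{uv}=[f]_u[f]_v$ for coprime $u,v$ with $uv\le m$), and that of the normalized squares $a_0 f^2, a_0 g^2$, whose coefficients $b_n$ satisfy $b_{uv}=b_u b_v$ for coprime $u,v$ with $uv\le m$, these being exactly the relations $P_l=0$ cutting out $\X_m$. Since $\phi_{m,n}(f)=\phi_{m,n}(g)$, the series agree through degree $n$ (in particular on $a_0$ and on $a_1=1$), so $\E(h)\subseteq[n+1,m]$. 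For the first inclusion I would take $j\in[n+1,\min(2n,m)]$ with $j\notin\P$, write $j=uv$ with coprime $u,v\ge 2$, note $u,v\le j/2\le n$, and conclude $[f]_j=[f]_u[f]_v=[g]_u[g]_v=[g]_j$; hence $j\notin\E(h)$ and $\E(h)\cap[1,2n]\subseteq\P$ (coefficients in degrees $>m$ vanish outright).

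For the second inclusion the key identity is $2a_0 fh = a_0 f^2 - a_0 g^2 + a_0 h^2$, arising from $f^2-g^2 = h(f+g)=h(2f-h)$. Because $h$ is supported in degrees $\ge n+1$, its square $h^2$ is supported in degrees $\ge 2n+2$, so $[a_0 h^2]_j=0$ for $j\le 2n$ and therefore $2a_0[fh]_j = b_j(f)-b_j(g)$ there. For a non-prime-power $j\le\min(2n,m)$ I factor $j=uv$ as before with $u,v\le n$; since $f,g$ agree through degree $n$ their normalized squares agree through degree $n$ as well, and the relation $b_j=b_u b_v$ (valid on $\X_m$ because $j\le m$) gives $b_j(f)=b_j(g)$, whence $[fh]_j=0$. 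This proves $\E(fh)\cap[1,\min(2n,m)]\subseteq\P$.

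For the last assertion, set $k:=\min\E(h)$. The hypothesis that $\E(h)$ contains an element other than $m$ is equivalent to $k<m$ (otherwise $\E(h)=\{m\}$), and $k\ge n+1\ge 3$. That $k\in\P$ is immediate: if $k$ were composite I could write $k=uv$ with coprime $u,v\ge 2$, both $<k$, and since $f,g$ agree below $k$ multiplicativity would force $[h]_k=[f]_u[f]_v-[g]_u[g]_v=0$, contradicting $k\in\E(h)$.

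The main obstacle is showing $k+1\in\P$, where the two multiplicativities must be set against each other. Suppose $k+1\notin\P$. Since $k\ge 3$ and $k+1\le m$, I can write $k+1=uv$ with coprime $u,v\ge 2$ and $u,v\le(k+1)/2\le k-1$; multiplicativity of $f,g$ together with agreement below $k$ then forces $[h]_{k+1}=0$. On the other hand, the same factorization applied to the normalized squares gives $b_{k+1}(f)=b_{k+1}(g)$, and since $[a_0 h^2]_{k+1}=0$ (as $k+1<2k$), the identity above yields $[fh]_{k+1}=0$. But expanding the product directly, only the $i\in\{0,1\}$ terms survive, so $[fh]_{k+1}=\tfrac{1}{2a_0}[h]_{k+1}+[h]_k$, which equals $[h]_k\neq 0$ after substituting $[h]_{k+1}=0$. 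This contradiction forces $k+1\in\P$. I expect the delicate point to be exactly this last step: multiplicativity of $f$ makes $[h]_{k+1}$ vanish while multiplicativity of $f^2$ makes $[fh]_{k+1}$ vanish, and these two facts are compatible only if $[h]_k=0$.
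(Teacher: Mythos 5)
Your proof is correct and follows essentially the same route as the paper: coprime factorization into parts of degree $\le n$ plus multiplicativity of $f$ for the first inclusion, the identity $2f(f-g)=f^2-g^2+(f-g)^2$ plus multiplicativity of $f^2$ for the second, and the computation $[f(f-g)]_{k+1}=[f]_0[f-g]_{k+1}+[f]_1[f-g]_k$ contradicting $a_1=1$ for the final claim. The only cosmetic slip is saying ``composite'' where you mean ``not a prime power,'' but your argument plainly uses the correct notion.
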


\begin{proof}
If $k\le 2n$ is not in $\P$, then $k = k_1 k_2$, where $k_1$ and $k_2$ are relatively prime and $\le n$.  The $q^{k_1}$ and $q^{k_2}$ coefficients
of $f$ and $g$ coincide, so $f,g\in \X_m$ and $k\le m$ implies that the $q^k$ coefficients of $f$ and $g$ are the same, giving the first claim.
As $\E((f-g)^2)\subset [2n+2,\infty)$, we have
$$\E(f(f-g)) \cap [1,\min(2n,m)] = \E(f^2 - g^2) \cap [1,\min(2n,m)].$$
If $k\le 2n$ is not in $\P$, we factor as before, and if $k\le m$, the $k_1 k_2$ coefficients of $f^2$ and $g^2$ are determined by the $k_1$ and $k_2$ coefficients and
are therefore the same.  For the last claim, we note that if $k\le m-1$ is the smallest element of $\E(f-g)$ and $k+1\not\in \P$, then $k+1\not\in \E(f-g)$ and $k+1\not\in \E(f(f-g))$
implies that the $q$ coefficient of $f$ is $0$, which is impossible.
\end{proof}

\begin{cor}
\label{InfinityFibers}
If $k>0$ is the minimal element of $\E(f-g)$ for $f,g\in\X$, then $k,k+1\in\P$.
\end{cor}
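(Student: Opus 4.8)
The plan is to deduce the corollary from the final assertion of Lemma~\ref{Fibers} by descending from the inverse limit $\X$ to a sufficiently high finite level $\X_m$. That lemma carries the hypothesis that $\E(f-g)$ ``contains an element other than $m$'' only because, at finite level, the difference $f-g$ could in principle be supported at the single top coordinate $q^m$, whose index need not be a prime power. For genuine $f,g\in\X$ I would sidestep this degeneracy by truncating well above the minimal index of disagreement.

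First I would fix $f,g\in\X$ with $f\neq g$ and let $k$ be the minimal element of $\E(f-g)$; since $\E$ records only indices $\ge 2$, we have $k\ge 2$. The case $k=2$ is immediate, as $2$ and $3$ are both prime, so I would dispose of it at once and assume $k\ge 3$. Then I would fix any integer $m>k$, set $f_m:=\psi_m(f)$ and $g_m:=\psi_m(g)$ in $\X_m$, and observe that, under the identification of points of $\X_m$ with polynomials of degree $\le m$, the element $f_m-g_m$ is the degree-$\le m$ truncation of $f-g$. Hence $\E(f_m-g_m)=\E(f-g)\cap[2,m]$, whose minimal element is again $k$; since $k<m$, this set contains an element other than $m$.

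With $n:=k-1\ge 2$, the fact that $f$ and $g$ agree in every coefficient of index $\le k-1=n$ gives $\phi_{m,n}(f_m)=\phi_{m,n}(g_m)$, so the hypotheses of Lemma~\ref{Fibers} hold for $f_m,g_m\in\X_m$. Its last assertion then yields $k,k+1\in\P$, which is exactly the claim.

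I expect the only delicate point to be the bookkeeping ensuring that the hypotheses of Lemma~\ref{Fibers} are genuinely met at the chosen level: that $n=k-1$ is an admissible index (forcing the separate, trivial treatment of $k=2$), that $m$ may be taken strictly above $k$ so that $k$ is not the top coordinate, and that the agreement of $f$ and $g$ below degree $k$ really does produce $\phi_{m,n}(f_m)=\phi_{m,n}(g_m)$. There is no genuine analytic or combinatorial obstacle here: all the substantive content --- the coprime factorization of non-prime-powers and the contradiction that would force the $q$-coefficient of $f$ to vanish --- already lives in the proof of Lemma~\ref{Fibers}. The corollary is in essence the limiting version of that lemma, obtained by removing its finite-level caveat through working above the fixed index $k$.
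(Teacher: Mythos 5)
Your argument is correct and is essentially the paper's own proof: the paper likewise disposes of small $k$ (it assumes $k\ge 5$, where you stop at $k\ge 3$, but $(3,4)$ and $(4,5)$ are prime-power pairs either way) and then applies the last assertion of Lemma~\ref{Fibers} to $\psi_m(f)$ and $\psi_m(g)$ with $n=k-1$ and $m=k+1$. Your only deviation is allowing an arbitrary $m>k$ rather than $m=k+1$, which changes nothing.
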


\begin{proof}
Without loss of generality, we may assume $k\ge 5$.  The corollary follows by applying Lemma~\ref{Fibers} to $\psi_m(f)$ and $\psi_m(g)$ for $m=k+1$ and $n=k-1$.
\end{proof}

The condition $k,k+1\in\P$ is very strong:

\begin{lem}
\label{Basic}
If $k$ and $k+1$ both belong to $\P$ and $k>8$,
then $k$ is a Mersenne prime or $k+1$ is a Fermat prime.
\end{lem}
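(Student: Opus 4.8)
The plan is to translate the statement into a problem about consecutive prime powers and then reduce to elementary considerations on exponents. Suppose $k$ and $k+1$ both lie in $\P$, so $k = p^a$ and $k+1 = \ell^b$ for primes $p,\ell$ and positive integers $a,b$. Since $p^a$ and $\ell^b$ differ by $1$, the primes $p$ and $\ell$ are distinct, and one of them is even; without loss of generality exactly one of $k, k+1$ is even.

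First I would dispose of the case where one of the two exponents is $1$. If $a=1$ and $b=1$, then $k$ and $k+1$ are consecutive primes, which forces $k=2$, contradicting $k>8$. So at least one exponent exceeds $1$, and since one of $k,k+1$ is odd and the other even, I would split according to which of the two is the prime power with exponent $>1$ and which is even. The key classical tool is the solution of the relevant cases of Catalan-type equations at the level needed here: I would invoke the elementary fact (provable by congruence and factorization arguments, not the full Mihailescu theorem) that the only solutions of $p^a - \ell^b = \pm 1$ with $a,b \ge 1$ and $\min(a,b)\ge 2$ in this range are governed by $2$-adic and $3$-adic obstructions.

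The main steps I would carry out are as follows. If $k$ is even, write $k=2^a$ and $k+1=\ell^b$. When $b=1$, $k+1$ is a prime and $k=2^a$, so $k$ is one less than a prime of the form $2^a+1$; such a prime is necessarily a Fermat prime, giving one branch of the conclusion. When $b>1$, I would use the factorization $2^a = \ell^b-1 = (\ell-1)(\ell^{b-1}+\cdots+1)$ together with parity of the second factor to force $b$ small, and then a direct check (the Catalan case $\ell^b - 2^a = 1$) eliminates all solutions with $k>8$ except those already accounted for. Symmetrically, if $k$ is odd, write $k=p^a$ and $k+1=2^b$. When $a=1$, $k$ is a prime equal to $2^b-1$, hence a Mersenne prime, giving the other branch. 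When $a>1$, the equation $2^b - p^a = 1$ with $a\ge 2$ again reduces, via factoring $2^b-1=(p^{a/?}\cdots)$ and a $2$-adic valuation count, to the single small solution $9-8$, i.e.\ $k=8$, which the hypothesis $k>8$ excludes.

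The hard part will be justifying cleanly that no ``mixed'' solutions survive, namely ruling out $2^a \pm 1 = \ell^b$ with both exponents large by purely elementary means rather than quoting heavy machinery; concretely, the delicate case is $\ell^b - 2^a = 1$ with $b\ge 2$, where I would argue that $b$ must be even (reducing mod $4$ when $a\ge 2$), write $\ell^b-1 = (\ell^{b/2}-1)(\ell^{b/2}+1)$, and observe that the two factors are consecutive even numbers whose product is a power of $2$, forcing them to be $2$ and $4$ and hence $k=8$. All remaining configurations then yield either a Mersenne prime $k$ or a Fermat prime $k+1$, which is exactly the claimed dichotomy.
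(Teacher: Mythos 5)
Your proposal is correct and follows essentially the same route as the paper: both observe that one of $k,k+1$ is a power of $2$, reduce to the near-Catalan equation $2^m = p^n \pm 1$, solve it by elementary $2$-adic/factorization arguments (the paper packages this as a single bound on the $2$-adic valuation of $p^{2^r(2s+1)}\pm 1$, whereas you argue case by case via $(\ell^{b/2}-1)(\ell^{b/2}+1)$ and congruences), and note that the unique nontrivial solution $k=8$, $k+1=9$ is excluded by the hypothesis $k>8$. One small caution: reducing mod $4$ does not by itself force $b$ even when $\ell\equiv 1\pmod 4$; the correct justification is the parity of the cofactor $\ell^{b-1}+\cdots+\ell+1\equiv b\pmod 2$ in your factorization of $\ell^b-1=2^a$, which you do state in the preceding paragraph, so the gap is only in the parenthetical remark, not in the plan.
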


\begin{proof}
Either $k$ or $k+1$ is even and therefore a power of $2$. 
The highest power of $2$ dividing $p^{2^r(2s+1)}\pm1$ is at most $2^{r+1}$ 
times the highest power of $2$ dividing $p\pm1$.
Therefore, the only solutions of $2^m-p^n=\pm1$ in integers $m,n,p>1$
is $(3,2,3)$.  If we allow $n=1$ but insist that $p$ is prime,
we obtain precisely the solutions of Mersenne and Fermat type.
\end{proof}

For  use in the next two sections, we prove a number of facts about $\P$ with special reference to Mersenne and Fermat primes.

\begin{lem}
\label{MersenneGaps}
If $p>7$ is a Mersenne
prime, then
$$p+n\notin\P\quad\forall n\in\{2,3,5,7,8,9,11,13,14,15\}.$$
Moreover, either $p+4\notin\P$ or $p+6\notin\P$.
\end{lem}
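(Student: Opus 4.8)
The plan is to write $p = 2^m - 1$ with $m$ its exponent, which is necessarily prime (else $2^a-1\mid p$ for a proper divisor $a\mid m$) and, since $p>7$, an odd prime $>3$; thus $m\equiv\pm1\pmod 6$ and $2^m\equiv 0\pmod{16}$. The engine of every case is the elementary observation that a prime power divisible by a prime $q$ is a power of $q$. So for each forbidden $n$ I would either squeeze $p+n$ strictly between two consecutive powers of a fixed prime, or exhibit a small prime $q\mid p+n$ and a congruence obstruction showing $p+n$ is not a power of $q$, hence not a prime power.

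For the odd values $n\in\{3,5,7,9,11,13,15\}$ the quantity $p+n$ is even, so the only way it is a prime power is to be a power of $2$. But $2^m < p+n = 2^m-1+n \le 2^m+14 < 2^{m+1}$, using $2^m\ge 32$, and no power of $2$ lies strictly between the consecutive powers $2^m$ and $2^{m+1}$. This kills all seven odd cases simultaneously.

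For the even values $n\in\{2,8,14\}$ one has $n\equiv 2\pmod 3$, and since $m$ is odd, $p\equiv 1\pmod 3$, so $3\mid p+n$; a prime-power value would then have to be a power of $3$. For $n=8$ and $n=14$ this dies immediately modulo $8$, where $p+n\equiv 7$ and $\equiv 5$ respectively, while powers of $3$ are $\equiv 1,3\pmod 8$. The case $n=2$ is the one that survives the crude mod-$8$ test, because $2^m+1\equiv 1\pmod 8$; there I would instead work modulo $9$, using $m\equiv\pm1\pmod 6$ to get $2^m+1\equiv 3$ or $6\pmod 9$, so that $3$ divides $p+2$ exactly once while $p+2>3$, hence $p+2$ is not a power of $3$.

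For the \emph{moreover} clause I would split on $m\bmod 3$ (noting $m\not\equiv 0$ since $m$ is prime $>3$): one checks $7\mid p+4$ when $m\equiv 2\pmod 3$ and $7\mid p+6$ when $m\equiv 1\pmod 3$, so in either case exactly one of $p+4,p+6$ is divisible by $7$. Since powers of $7$ are $\equiv 1$ or $7\pmod{16}$ whereas $p+4\equiv 3$ and $p+6\equiv 5\pmod{16}$, the relevant term is divisible by $7$ but is not a power of $7$, hence not a prime power. The main obstacle is precisely this clause together with the $n=2$ case: unlike the others, they pass the first modulus one tries, and the real content is choosing a second auxiliary modulus (namely $9$ and $16$) against which the residues of a hypothetical power of the small prime are constrained tightly enough to force a second prime factor. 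Everything else is routine verification of the stated congruences.
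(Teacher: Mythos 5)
Your proof is correct, and for the odd values of $n$, for $n\in\{8,14\}$, and for the \emph{moreover} clause it is essentially the paper's argument (the paper rules out powers of $7$ modulo $8$ where you use modulo $16$; both work since $p+4\equiv 3$ and $p+6\equiv 5$ in either modulus). The one genuine divergence is the case $n=2$: the paper disposes of $2^\ell+1$ by appealing to Lemma~\ref{Basic}, whose proof rests on the Catalan-type fact that $2^a-q^b=\pm1$ has no solutions with $a,b,q>1$ other than $(3,2,3)$ (proved by a $2$-adic valuation computation), whereas you argue directly modulo $9$: since the exponent $m$ is a prime greater than $3$, $m\equiv\pm1\pmod 6$, so $2^m+1\equiv 3$ or $6\pmod 9$, hence $3$ divides $p+2$ exactly once while $p+2>3$. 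Your route is self-contained and slightly more elementary at this point, at the cost of not isolating the reusable statement about consecutive prime powers that the paper needs again elsewhere; both are complete and correct.
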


\begin{proof} Every Mersenne prime is of the form $2^\ell-1$
for $\ell$ prime, and we may assume $\ell>3$.
For $n$ odd between $3$ and $15$, $p+n$ is even and
lies strictly between $2^\ell$ and $2^{\ell+1}$.  For $n=2$,
$p+n\not\in\P$ by Lemma~\ref{Basic}.  For $n\in\{8,14\}$,
$p+n$ cannot be in $\P$ since it is divisible by $3$ but is either $5$ or $7$ (mod $8$) and therefore not a power of $3$.  
Finally, one of 
$p+4$ and $p+6$ is divisible by $7$ but cannot be a power of $7$ since neither $3$ nor $5$ is a power of $7$ (mod $8$).
\end{proof}

\begin{lem}
\label{FermatGaps}
If $p>17$ is a Fermat prime,
then
$$p+n\notin\P\quad\forall 
n\in\{1,3,4,5,7,8,9,10,11\}.
$$
\end{lem}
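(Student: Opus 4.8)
The plan is to write $p = 2^m + 1$ where, since $p$ is a Fermat prime exceeding $17$, the exponent $m = 2^k$ is a power of two with $k \ge 3$; in particular $m$ is even, divisible by $8$, and $m \ge 8$. I would then study $p + n = 2^m + (n+1)$, splitting into the case where $n+1$ is even (the odd values $n \in \{1,3,5,7,9,11\}$) and the case where $n+1$ is odd (the even values $n \in \{4,8,10\}$).

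First, for the odd values of $n$ we have $n + 1 \in \{2,4,6,8,10,12\}$, so writing $n+1 = 2^s w$ with $w$ odd and $s = v_2(n+1)$, we have $1 \le s \le 3 < m$. Hence $p + n = 2^s\bigl(2^{m-s} + w\bigr)$ with $2^{m-s}+w$ odd (as $m > s$) and strictly greater than $1$. Thus $p+n$ is divisible both by $2$ and by some odd prime, so $p + n \notin \P$. This disposes of six of the nine values with nothing beyond counting factors of $2$.

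The even values require showing that an odd number is not a power of a fixed odd prime. For $n = 4$, since $m$ is even we have $2^m \equiv 1 \pmod 3$, so $3 \mid 2^m + 5$; but $2^m + 5 \equiv 5 \pmod 8$, whereas every power of $3$ is $\equiv 1$ or $3 \pmod 8$, so $2^m+5$ is not a power of $3$ and lies outside $\P$. The cases $n = 8$ and $n = 10$ are the crux. Here the natural prime divisor ($5$ for $n=8$, using $4 \mid m$ to get $2^m \equiv 1 \pmod 5$; and $3$ for $n=10$) does divide $p+n$, but the candidate value is congruent to a genuine positive power of that prime modulo every power of $2$ (for instance $2^m + 9 \equiv 9 \equiv 5^2 \pmod{16}$), so no congruence modulo a power of $2$ alone can exclude a prime power. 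The device that breaks the impasse is to use $8 \mid m$ to pass to the prime $17$, modulo which $2$ has order $8$, so that $2^m \equiv 1 \pmod{17}$.

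Concretely, suppose $2^m + 9 = 5^j$; reducing mod $3$ gives $1 \equiv (-1)^j$, so $j$ is even, while reducing mod $17$ gives $5^j \equiv 10 \pmod{17}$, and since $5$ is a primitive root with $5^7 \equiv 10$, we get $j \equiv 7 \pmod{16}$, i.e. $j$ odd — a contradiction. Likewise, suppose $2^m + 11 = 3^j$; reducing mod $16$ gives $3^j \equiv 11 \pmod{16}$, forcing $j \equiv 3 \pmod 4$, while reducing mod $17$ gives $3^j \equiv 12 \pmod{17}$, and since $3$ is a primitive root with $3^{13} \equiv 12$, we get $j \equiv 13 \equiv 1 \pmod 4$ — again a contradiction. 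The main obstacle is precisely this last point: because $p = 17$ itself satisfies $17 + 8 = 5^2$ and $17 + 10 = 3^3$, any valid argument must genuinely exploit $k \ge 3$, equivalently $8 \mid m$, rather than merely the size of $p$. The passage to the modulus $17$, where the order of $2$ is exactly $8$, is what distinguishes $8 \mid m$ from $4 \mid m$ and thereby encodes the hypothesis $p > 17$.
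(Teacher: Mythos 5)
Your proof is correct, and every one of the nine cases checks out: the $2$-adic valuation argument disposes of the six odd $n$, the mod $8$ computation kills $n=4$, and the two order computations modulo $17$ (where $5$ and $3$ are primitive roots and $5^7\equiv 10$, $3^{13}\equiv 12$) do indeed contradict the parity, respectively the residue mod $4$, of the exponent obtained from the reductions mod $3$ and mod $16$. The overall skeleton matches the paper's: even numbers strictly between consecutive powers of $2$ for odd $n$, and for even $n$ a forced small prime divisor followed by congruences ruling out a pure prime power. But the details of the even cases diverge. For $n=4$ the paper only uses that $p+4\not\equiv 3\pmod 8$ to force an even exponent of $3$ and then squeezes $p+4$ between consecutive perfect squares; your direct observation that $p+4\equiv 5\pmod 8$ while powers of $3$ are $1$ or $3\pmod 8$ is shorter. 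For $n=8$ the paper again argues via perfect squares with a mod $40$ congruence, whereas you get an outright parity contradiction on the exponent from mod $3$ versus mod $17$. For $n=10$ the paper combines mod $17$ with mod $257$, which needs $16\mid m$ and therefore forces a separate hand check of $p=257$ (namely $267=3\cdot 89$); your replacement of mod $257$ by mod $16$ makes the argument uniform over all $p\ge 257$ and eliminates that case split. Both proofs ultimately exploit $8\mid m$ through the prime $17$, and your closing remark that $17+8=5^2$ and $17+10=3^3$ correctly identifies why the hypothesis $p>17$ cannot be weakened.
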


\begin{proof}
For $n\le 11$ odd, $p+n$ is an even number strictly between two
consecutive powers of 2.  For $n=4$, $p+n$ is divisible by 3
but is not congruent to 3 (mod 8).  If it is a power of 3, it is therefore
a perfect square, which is impossible since 
$$\Bigl(2^{2^{k-1}}\Bigr)^2<2^{2^k}+1+n<\Bigl(2^{2^{k-1}}+1\Bigr)^2.$$
For $n=8$, $p+n$ is not a square and therefore cannot be in $\P$ 
by a (mod 40) argument.  Finally, $p+10$ is divisible by $3$.  If $p+10=3^r$, 
the congruences 
$\con{3^r}{12}{17}$ and $\con{3^r}{12}{257}$ would imply the inconsistent 
congruences $\con{r}{13}{16}$ and $\con{r}{97}{256}$.  This rules out
the case $p>257$, and for $p=257$, $267\not\in \P$.
\end{proof}

\begin{lem}
\label{DoubleF}
If $p>5$ is a Fermat prime, then $2p\pm 1\notin\P$.
\end{lem}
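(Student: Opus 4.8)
The plan is to prove that if $p > 5$ is a Fermat prime, then neither $2p-1$ nor $2p+1$ is a prime power. Write $p = 2^{2^k}+1$ with $k \ge 2$ (since $p > 5$ rules out $k = 0, 1$), so $p \ge 17$. Then $2p = 2^{2^k+1}+2$, and I want to show $2p-1 = 2^{2^k+1}+1$ and $2p+1 = 2^{2^k+1}+3$ both avoid $\P$. Both numbers are odd, so the only way either lies in $\P$ is if it is an odd prime power $q^r$ with $q$ odd. My strategy is to handle the two cases by congruence obstructions, exploiting that $2^{2^k+1}$ has a rigid $2$-adic valuation in the relevant exponential identities and that its residues modulo small primes are highly constrained.

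First I would dispose of $2p-1 = 2^{2^k+1}+1$. Suppose this equals $q^r$ for an odd prime $q$ and $r \ge 1$. The key observation is that $2^{2^k+1}+1$ is divisible by $3$: since $2^k+1$ is odd, $2^{2^k+1} \equiv 2 \pmod 3$, so $2^{2^k+1}+1 \equiv 0 \pmod 3$. Hence if it is a prime power it must be a power of $3$, say $3^r = 2^{2^k+1}+1$. I would then run the same lifting-the-exponent style argument used in Lemma~\ref{Basic} and Lemma~\ref{FermatGaps}: comparing $2$-adic valuations forces $r$ to be even (the highest power of $2$ dividing $3^r - 1$ controls the exponent), but $3^r = 2^{2^k+1}+1$ with $r$ even makes $3^r - 1 = 2^{2^k+1}$ a single power of $2$, and $3^r - 1 = (3^{r/2}-1)(3^{r/2}+1)$ is a product of two consecutive even numbers whose ratio of $2$-parts is too small to yield a pure power of $2$ once $r/2 \ge 2$. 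This pins $r$ down to a tiny range that I can check directly against $p = 17, 257, 65537$.

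Next I would handle $2p+1 = 2^{2^k+1}+3$. Here I would look for a fixed small modulus that simultaneously excludes all admissible prime-power values. Since $2^k+1$ is odd for all $k \ge 1$, the exponent $2^k+1$ is odd, so $2^{2^k+1} \pmod m$ cycles through a predictable set as $k$ varies; I would use a modulus such as $7$ or $8$ (or $40$, as in Lemma~\ref{FermatGaps}) to show that $2^{2^k+1}+3$ is, say, a quadratic nonresidue mod some prime $\ell$ to whatever power is forced, or lies in a residue class incompatible with being a power of any admissible prime. Concretely, the residue of $2^{2^k+1}$ modulo $8$ is $0$ for $k \ge 2$, so $2p+1 \equiv 3 \pmod 8$, and no odd prime power $q^r$ with $r$ even is $\equiv 3 \pmod 8$; this reduces matters to odd exponents, after which a further congruence (mod $3$ or mod $7$) eliminates the remaining candidates.

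The hard part will be ensuring the congruence obstructions are genuinely uniform in $k$ rather than merely handling the three known Fermat primes $17, 257, 65537$. For the bounded cases I can simply compute: $2\cdot 17 \pm 1 = 33, 35$, $2 \cdot 257 \pm 1 = 513 = 27 \cdot 19, 515 = 5\cdot 103$, and $2 \cdot 65537 \pm 1 = 131073 = 3 \cdot 43691, 131075$, all visibly outside $\P$. But since it is not known whether finitely many Fermat primes exist, the clean statement requires an argument valid for every hypothetical Fermat prime, so I must be careful that the divisibility-by-$3$ argument for $2p-1$ and the modular argument for $2p+1$ both go through for arbitrary $k \ge 2$; the genuine subtlety lies in bounding the possible exponent $r$ in the $2p-1$ case, where the interplay of $2$-adic valuations, exactly as in the earlier lemmas of this section, is the crux.
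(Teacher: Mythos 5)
Your treatment of $2p-1$ is sound and essentially matches the paper: since $2^k+1$ is odd, $3\mid 2^{2^k+1}+1$, and then $3^r=2^{2^k+1}+1$ is killed by a Catalan-type argument (the paper simply cites Lemma~\ref{Basic}, whose only solution $2^3=3^2-1$ forces $k=1$; your factorization $3^r-1=(3^{r/2}-1)(3^{r/2}+1)$ is a correct reproof of the same fact). Note, though, that no fallback to ``check $p=17,257,65537$'' is needed or legitimate here --- the argument you sketch is already uniform in $k$, and as you yourself observe, restricting to the known Fermat primes would not prove the lemma.

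The $2p+1$ case has a genuine gap. A congruence obstruction can only rule out $2p+1\in\P$ if it first exhibits an actual small prime divisor of $2p+1$; otherwise nothing excludes the possibility that $2p+1$ is itself prime (primes are perfectly happy to be $3\pmod 8$). The divisor you need is $5$: for $k\ge 2$ one has $2^k+1\equiv 1\pmod 4$, hence $2^{2^k+1}\equiv 2\pmod 5$ and $5\mid 2^{2^k+1}+3$. The moduli you actually propose do not work: $2p+1\equiv 2\pmod 3$ always, and $7\mid 2p+1$ only when $k$ is even (already for $p=257$, i.e.\ $k=3$, one has $2p+1=515=5\cdot 103$, divisible by neither $3$ nor $7$). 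Once $5\mid 2p+1$ is in hand, your mod $8$ observation finishes it exactly as in the paper: $2p+1\equiv 3\pmod 8$ while every power of $5$ is $\equiv 1$ or $5\pmod 8$. So the missing idea is precisely the identification of $5$ as the universal divisor of $2p+1$; the parenthetical suggestion of working mod $40$ would have led you there, but as written the step ``a further congruence (mod $3$ or mod $7$) eliminates the remaining candidates'' fails.
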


\begin{proof}
As $p=2^{2^k}+1$ and $k\ge 2$, we have $3\vert 2p-1$ and $5\vert 2p+1$.
By Lemma~\ref{Basic}, $2p-1$ cannot be a power of $3$.  As for $2p+1$, it is
congruent to $3$ (mod $8$), so it cannot be a power of $5$.
\end{proof}

\begin{lem}
\label{TripleM}
If $p>7$ is a Mersenne prime and $2p+3\in\P$, then $3p+4\not\in\P$.
\end{lem}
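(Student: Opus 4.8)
The plan is to write $p = 2^\ell - 1$ with $\ell$ an odd prime, where $p > 7$ forces $\ell \ge 5$, and then to study the two quantities $2p+3 = 2^{\ell+1}+1$ and $3p+4 = 3\cdot 2^\ell + 1$ modulo $5$. Since $\ell$ is odd, it is congruent to either $1$ or $3 \pmod 4$, and the residue of $2^\ell \pmod 5$ depends only on $\ell \pmod 4$. A direct check reveals a clean dichotomy: when $\ell \equiv 1 \pmod 4$ we have $5 \mid 2^{\ell+1}+1 = 2p+3$, while when $\ell \equiv 3 \pmod 4$ we have $5 \mid 3\cdot 2^\ell + 1 = 3p+4$. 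Thus, in either case, exactly one of the two numbers of interest is divisible by $5$, and the hypothesis $2p+3\in\P$ is really being used to force $\ell \equiv 3 \pmod 4$.

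Building on this, I would argue by cases. If $\ell \equiv 1 \pmod 4$, then $5 \mid 2p+3$; provided I can show $2p+3$ is not a pure power of $5$, it follows that $2p+3 \notin \P$, so the hypothesis of the lemma fails and the implication holds vacuously. If $\ell \equiv 3 \pmod 4$, then $5 \mid 3p+4$, and provided $3p+4$ is not a pure power of $5$ it follows directly that $3p+4 \notin \P$, which is the desired conclusion. Everything therefore reduces to excluding the two exponential equations $2^{\ell+1}+1 = 5^k$ and $3\cdot 2^\ell + 1 = 5^j$ for $\ell \ge 5$.

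These exclusions are the heart of the matter, and I expect them to be the main (if elementary) obstacle. Each is an exponential Diophantine equation of the form $5^m - 1 = 2^{\ell+1}$ or $5^m - 1 = 3\cdot 2^\ell$, which I would dispatch by $2$-adic considerations. Reducing modulo $3$ fixes the parity of the exponent $m$; a computation of $v_2(5^m-1)$—via the lifting-the-exponent lemma, or directly from the factorization $5^{2i}-1 = (5^i-1)(5^i+1)$ together with $v_2(5^i+1)=1$—then forces $m$ down to a small value and pins each equation to a single solution. Tracing these back, the only possibilities are $\ell = 1$ for $2^{\ell+1}+1 = 5^k$ and $\ell = 3$ for $3\cdot 2^\ell + 1 = 5^j$, both strictly below the range $\ell \ge 5$. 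The excluded value $\ell = 3$ corresponds precisely to $p = 7$, where $3p+4 = 25 = 5^2 \in \P$ (and $2p+3 = 17 \in \P$); this confirms that the hypothesis $p > 7$ is genuinely necessary and that the bound in the argument is sharp.
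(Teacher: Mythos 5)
Your argument is correct and is essentially the paper's: both proofs show that the hypothesis forces $5 \mid 3p+4$ (the paper deduces from $2p+3\in\P$ that $2p+3=2^{\ell+1}+1$ is a Fermat prime, hence $\equiv 2 \pmod 5$, which is exactly your condition $\ell\equiv 3\pmod 4$), and then both exclude $3\cdot 2^{\ell}+1=5^{j}$ by bounding the $2$-adic valuation of $5^{j}-1$. Your explicit treatment of the vacuous branch $\ell\equiv 1\pmod 4$ is simply folded into the paper's appeal to the Fermat-prime classification, so the substance is identical.
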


\begin{proof}
Assuming $p=2^n-1$ is Mersenne, if $2p+3$ is prime, it is a Fermat prime $\ge 5$ and therefore $2$ (mod $5$).
It follows that $\con p25$, and therefore that $5$ divides $3p+4$.  If $3p+4$ is of the form $5^m$, then 
$\con{5^m}1{2^n}$.  The highest power of $2$ dividing $5^m-1$ is $2^{m+2}$,
$$3 \ge \frac{5^m-1}{2^{m+2}} > 2^{m-2}.$$
This implies $m\le 3$.  Now, $5^3-1$ is not divisible by $3$ at all, and while $(5^2-4)/3$ is a Mersenne prime, it is not greater than $7$.
\end{proof}

\begin{lem}
\label{MandM}
If $p_1,p_2 > 3$ are  Mersenne primes, then $p_1+p_2+1\not\in\P$.
\end{lem}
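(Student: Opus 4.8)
The plan is to reduce the statement to a pair of incompatible congruences, one modulo $3$ and one modulo $8$. First I would write each Mersenne prime in the form $p_i = 2^{a_i}-1$ with $a_i$ prime; since $p_i > 3$ we have $a_i \ge 3$, so in particular each $a_i$ is odd and $2^{a_i}\equiv 0 \pmod 8$. Then
$$p_1+p_2+1 = 2^{a_1}+2^{a_2}-1.$$

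Working modulo $3$, I would use $2\equiv -1$ together with the oddness of $a_1,a_2$ to get $2^{a_i}\equiv -1$, whence $p_1+p_2+1 \equiv -3 \equiv 0 \pmod 3$. Thus $3$ divides $p_1+p_2+1$, and so if this number lies in $\P$ it must be a power of $3$, say $3^m$ with $m\ge 1$.

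To exclude this possibility I would pass to the modulus $8$. Since $a_1,a_2\ge 3$, both $2^{a_1}$ and $2^{a_2}$ vanish modulo $8$, giving $p_1+p_2+1 \equiv -1 \equiv 7 \pmod 8$. On the other hand, the powers of $3$ are congruent to $1$ or $3$ modulo $8$ according as the exponent is even or odd, never to $7$. This contradiction shows $p_1+p_2+1\notin\P$.

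I do not anticipate a serious obstacle here: the entire content is the observation that these two small moduli together force the sum to be a power of $3$ and then rule it out. The one point requiring care is the hypothesis $p_i>3$, which is precisely what guarantees $a_i\ge 3$ (and hence both the odd-exponent step modulo $3$ and the vanishing step modulo $8$). Allowing $p_i=3$ would reintroduce the even exponent $a_i=2$ and break both congruences, consistent with the fact that $3+7+1=11\in\P$.
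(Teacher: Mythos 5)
Your proof is correct and follows essentially the same route as the paper's: divisibility by $3$ forces $p_1+p_2+1$ to be a power of $3$ if it lies in $\P$, and a congruence modulo $8$ (the paper phrases it as ``$3^n+1$ is never divisible by $8$,'' which is exactly your $3^m\not\equiv 7\pmod 8$) rules that out. No issues.
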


\begin{proof}
Mersenne primes greater than $3$ are always congruent to $1$ (mod $3$).  Thus, $3$ divides $p_1+p_2+1$.
However, $3^n+1$  is never divisible by $8$, to $p_1+p_2+1$ cannot be a power of $3$.
\end{proof}

\begin{lem}
\label{MandF}
If $3<p_1 < p_2$, $p_1$ is a Mersenne prime, and $p_2$ is a Fermat prime, then $2p_1+p_2+2\not\in\P$.
\end{lem}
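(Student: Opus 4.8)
The plan is to substitute $p_1 = 2^n - 1$ and $p_2 = 2^{2^k} + 1$ and to show that the resulting integer $N := 2p_1 + p_2 + 2 = 2^{n+1} + 2^{2^k} + 1$ cannot lie in $\P$. Here $n$ is an odd prime with $n \ge 3$ (so $n+1$ is even), while $p_2 > p_1 \ge 7$ forces $p_2 \ge 17$ and hence $k \ge 2$. The inequality $p_1 < p_2$ also gives $2^n < 2^{2^k} + 2$, and comparing powers of two yields the bound $n \le 2^k$, which I will need at the very end. The overall strategy is first to pin down the only prime that could divide a prime-power value of $N$, then to force that value to be a perfect square, and finally to contradict the squareness using the primality of $p_2$.

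As in the preceding lemmas, I would begin with two congruence reductions. Reducing mod $3$ and using $2 \equiv -1$, the evenness of both $n+1$ and $2^k$ gives $N \equiv 1 + 1 + 1 \equiv 0 \pmod 3$; since $3 \mid N$, if $N \in \P$ then $N$ is a power of $3$, say $N = 3^r$. Reducing mod $8$, the inequalities $n + 1 \ge 4$ and $2^k \ge 4$ make both $2^{n+1}$ and $2^{2^k}$ vanish, so $N \equiv 1 \pmod 8$; as $3^r \equiv 1 \pmod 8$ precisely when $r$ is even, $N$ must be a perfect square. This reduces the claim to showing that $N$ is not a perfect square---exactly the kind of reduction used for $p+8$ in Lemma~\ref{FermatGaps}.

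The crux is then to exploit the Fermat-primality of $p_2$ directly. Because $n+1$ is even I can write $2^{n+1} = a^2$ with $a = 2^{(n+1)/2}$, so that $N = a^2 + p_2$. If $N = c^2$ were a perfect square, then $p_2 = c^2 - a^2 = (c-a)(c+a)$ would be a factorization of the \emph{prime} $p_2$ into positive integers (note $c > a > 0$), forcing $c - a = 1$ and $c + a = p_2$. This gives $a = (p_2 - 1)/2 = 2^{2^k - 1}$, hence $(n+1)/2 = 2^k - 1$ and $n = 2^{k+1} - 3$. But $2^{k+1} - 3 > 2^k$ for every $k \ge 2$, contradicting $n \le 2^k$. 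Thus $N$ is not a square, and the lemma follows.

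I expect the congruence bookkeeping to be entirely routine; the one genuinely new idea---and the step I would take care to get right---is the observation that $p_2 = \bigl(2^{2^{k-1}}\bigr)^2 + 1$ lets one peel off the Fermat prime as $N = a^2 + p_2$, after which its primality does all the work. The parameter bound $n \le 2^k$ extracted from $p_1 < p_2$ is what converts the algebraic identity $n = 2^{k+1} - 3$ into a contradiction, so I would establish that inequality cleanly at the outset and keep track of it through the argument.
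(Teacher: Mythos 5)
Your proof is correct, and after the shared opening it finishes by a genuinely different route than the paper. Both arguments start identically: reducing mod $3$ shows $3$ divides $N=2p_1+p_2+2$ (Mersenne and Fermat primes $>3$ are $1$ and $2$ mod $3$), so $N\in\P$ forces $N=3^r$; reducing mod $8$ gives $N\equiv 1\pmod 8$, forcing $r$ even and hence $N$ a perfect square. From there the paper writes $N=3^{2n}$, computes the exact power of $2$ dividing $3^{2n}-1=2^{m+1}+2^{2^k}$ by a lifting-the-exponent style formula, combines this with the ``nearest square below $3^{2n}$'' estimate $p_2-1\le(3^n-1)^2$, and squeezes out the inequality $s\cdot 2^r<r+2$, which leaves no room for $p_1\ge 7$. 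You instead notice that $N-p_2=2^{n+1}$ is itself a perfect square $a^2$, so a square value $N=c^2$ yields the factorization $p_2=(c-a)(c+a)$ of the \emph{prime} $p_2$, whence $c-a=1$, $a=2^{2^k-1}$, $n=2^{k+1}-3$, contradicting the bound $n\le 2^k$ that follows from $p_1<p_2$. Your finish is more elementary --- a single difference-of-squares factorization replaces the $2$-adic valuation bookkeeping and the square-gap estimate --- and it makes transparent exactly where the primality of $p_2$ (not merely its shape $2^{2^k}+1$) enters. The paper's method has the virtue of being of a piece with the valuation arguments of the neighboring lemmas such as Lemma~\ref{TripleM}, but for this particular statement your argument is shorter and cleaner.
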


\begin{proof}
As $p_1$ and $p_2$ are Mersenne and Fermat respectively, they are $1$ and $2$ (mod $3$) respectively, so $2p_1+p_2+2$ is divisible by $3$.  As $\con{2p_1+p_2+2}18$,
if $2p_1+p_2+2\in \P$, there exists $n$ such that
$2p_1+p_2+2=3^{2n}$.  If $n = 2^r s$, where $s$ is odd, then the highest power of $2$ dividing
$3^n-1$ is $2^{r+3}$.  If $p_1 = 2^m-1$, then $r+3 \ge m+1$.   As $p_2-1$ is a perfect square, it is less than or equal to $(3^n-1)^2$, so
$$2\cdot 3^n-1 \le 2p_1+3 = 2^{m+1}+1\le 2^{r+3}+1,$$
i.e.,
$$3^{s\cdot 2^r} \le 3^n \le 2^{r+2}+1 < 3^{r+2}.$$
Thus, $s\cdot 2^r < r+2$, so $s=1$ and $r\in \{0,1\}$.  This is impossible since $p_1\ge 7$.
\end{proof}

\begin{lem}
\label{Cantor}
For every odd prime $\ell$, every positive integer
$d$ not divisible by $\ell$, and every residue class (mod $d$), there exists
an integer $n\le (2d^2)^{9\log (2d)}$ such that $n$ belongs to the 
specified residue class and $2n\choose n$ is not divisible by $\ell$.
\end{lem}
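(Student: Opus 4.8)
The plan is to translate the divisibility condition into a statement about base-$\ell$ digits via Kummer's theorem, and then to solve a covering problem for residues modulo $d$.

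First I would recall Kummer's theorem: the $\ell$-adic valuation $v_\ell\binom{2n}{n}$ equals the number of carries occurring when $n$ is added to itself in base $\ell$. Since $\ell$ is odd, no carry is generated at a digit of value $c$ precisely when $2c<\ell$, i.e. $c\le(\ell-1)/2$, and if every digit satisfies this then no carry is ever generated or propagated. Hence $\ell\nmid\binom{2n}{n}$ if and only if every base-$\ell$ digit of $n$ lies in $D:=\{0,1,\ldots,(\ell-1)/2\}$. The lemma is therefore equivalent to the following: for each target residue $a\bmod d$ there is an integer $n\le(2d^2)^{9\log(2d)}$ with $n\equiv a\pmod d$ all of whose base-$\ell$ digits lie in $D$. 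Writing $n=\sum_{i\ge0}c_i\ell^i$ with $c_i\in D$, I must show that the restricted-digit sums $\sum_{i=0}^{k-1}c_i\ell^i$ cover all of $\Z/d\Z$ for a suitable $k$, and then bound the resulting $n<\ell^k$.

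Second, the regime $\ell>2d$ is immediate: then $(\ell-1)/2\ge d$, so $D\supseteq\{0,1,\ldots,d-1\}$ and a single digit $c_0\equiv a\pmod d$ produces $n=c_0<d$, well within the bound. So I may assume $\ell\le 2d$, whence $\log\ell\le\log(2d)$; this is exactly the factor that makes $\ell^k$ fit inside $(2d^2)^{9\log(2d)}$ once $k=O((\log d)^2)$. Because $\gcd(\ell,d)=1$, the residues $\ell^0,\ell^1,\ldots$ are units, but the multiplicative order $t$ of $\ell$ modulo $d$ can be as large as $\phi(d)\sim d$; any argument that must traverse a full period $\ell^0,\ldots,\ell^{t-1}$ would only yield $n$ of size $\ell^t$, far too large. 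The construction must therefore cover $\Z/d\Z$ using only a short initial segment $\ell^0,\ldots,\ell^{k-1}$ with $k\ll t$, exploiting digit multiplicities up to $(\ell-1)/2$.

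The heart of the argument is this covering statement, which I would attack by an exponential-sum bound on $\Z/d\Z$. The number of digit vectors $(c_i)\in D^k$ with $\sum c_i\ell^i\equiv a$ equals
\[
\frac1d\sum_{b=0}^{d-1}e^{-2\pi iab/d}\prod_{i=0}^{k-1}G(b\ell^i),\qquad G(x)=\sum_{c\in D}e^{2\pi icx/d},
\]
and surjectivity follows once the main term $|D|^k/d$ from $b=0$ dominates $\frac1d\sum_{b\ne0}\prod_i|G(b\ell^i)|$. Here $|G(x)|\le|D|=(\ell+1)/2$, with near-equality only when $x$ is close to a multiple of $d$, while in general $|G(x)|\le1/(2\|x/d\|)$, with $\|\cdot\|$ the distance to the nearest integer. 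The key geometric observation is that the ``bad'' positions, where $b\ell^i$ lies within $O(d/\ell)$ of a multiple of $d$, are isolated: if $b\ell^i$ is that close to $0$, then $b\ell^{i+1}\approx\ell\cdot(b\ell^i)$ is roughly $\ell$ times farther away, so consecutive near-zero hits are separated by about $\log_\ell d$ positions, each followed by a run along which $|G|$ decays geometrically. Summing these estimates over the orbit $\{b\ell^i\bmod d\}$ should give $\prod_i|G(b\ell^i)|\le|D|^k\,d^{-c}$ for a window length $k$ of order $(\log d)^2$, at which point the fewer than $d$ error terms are beaten by the main term. I expect this equidistribution estimate for short segments of the geometric progression $b,b\ell,b\ell^2,\ldots$ modulo $d$—together with the bookkeeping needed to extract the explicit exponent $9\log(2d)$ and base $2d^2$—to be the main obstacle; the delicacy is that the orbit structure depends on $\gcd(b,d)$ and on how $\langle\ell\rangle$ sits inside $(\Z/d\Z)^\ast$, so the bound must be made uniform over all $b\ne0$. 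Granting the covering with $k=O((\log d)^2)$, I would finish by bounding $n<\ell^k\le(2d)^k$ and checking that $(2d)^{O((\log d)^2)}\le(2d^2)^{9\log(2d)}$, so the constructed $n$—whose digits lie in $D$, hence with $\ell\nmid\binom{2n}{n}$, and which lies in the prescribed class modulo $d$—obeys the stated bound.
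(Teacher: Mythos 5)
Your proposal follows essentially the same route as the paper: the Kummer/digit reduction (all base-$\ell$ digits $\le(\ell-1)/2$), the trivial disposal of the case $\ell>2d$, and the generating-function count $\frac1d\sum_b\zeta^{-ab}\prod_iG(b\ell^i)$ are all exactly the paper's argument, which writes the product as $F_r(\zeta^b)$ for $F_r(x)=\prod_{i<r}\bigl(1+x^{\ell^i}+\cdots+x^{k\ell^i}\bigr)$ and compares against the main term $F_r(1)/d=(k+1)^r/d$. The one step you explicitly leave open --- a bound $\prod_i|G(b\ell^i)|<|D|^k/d$ uniform in $b\ne0$ --- is where all the work lies, but it closes more cheaply than you anticipate: no equidistribution of the orbit $b\ell^i$ and no analysis of $\gcd(b,d)$ or of how $\langle\ell\rangle$ sits in $(\Z/d\Z)^*$ is needed. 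The paper observes only that if $\ell^s>d$ then $b,b\ell,\ldots,b\ell^{s-1}$ cannot all be congruent mod $d$ to integers in $[-3d/4\ell,3d/4\ell]$ (each such residue would be $\ell$ times the previous one as an integer, forcing $b\equiv0$), and that at any position outside this interval $|G|\le1/\sin(3\pi/4\ell)\le\frac89\cdot\frac{\ell+1}2$. Hence every window of $s\approx\log_\ell d$ consecutive factors contributes a saving of $8/9$, and $t>\log d/\log(9/8)$ such windows beat the factor $d$; this is a single-good-position-per-window argument rather than the geometric decay after isolated bad hits that you sketch, though your version would also work. With $r=st$ one gets $n<\ell^{st}<(2d^2)^{9\log(2d)}$, exactly the bookkeeping you projected.
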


\begin{proof} Let $\ell=2k+1$.  If the digits in the base $\ell$ expansion of
$n$ are all $\le k$, then the second condition is satisfied.  
In particular, if $k\ge d$, then the theorem is certainly true since
$n$ can then be chosen to be the representative of the residue class of $a$ in
$[0,d-1]$.  We therefore assume that $\ell<2d$.  

Let 
$$F_r(x)=\prod_{i=0}^{r-1}
\Bigl(1+x^{\ell^i}+x^{2\ell^i}+\cdots+x^{k\ell^i}\Bigr).$$
Then $F_r(x)$ is a sum of distinct terms $x^n$ where $\ell\nmid{2n\choose n}$.
We would like to show that for a suitable value of $r$,
all residue classes of $p$ are represented among the exponents of $F_r(x)$.
As $F_r(1)=(k+1)^r$, it suffices to prove that 
$$|F_r(\zeta^i)|<\frac{(k+1)^r}{d},\ \zeta=e^{2\pi i/d},\ 1\le i < d.$$  

If $m$ is not congruent (mod $d$) to an integer in the interval 
$[-3d/4\ell,3d/4\ell]$, then 
$$\Bigm|\sum_{j=0}^k \zeta^{jm}\Bigm|=\frac{\bigm|1-\zeta^[(k+1)m\bigm|}
{\bigm|1-\zeta^m\bigm|}\le\frac{2}{|2\sin(\pi m/d)|}\le
\frac1{|\sin(3\pi/4\ell)|}.$$
For $0\le x\le \pi/6$, $\sin x\ge 3x/\pi$.  Therefore, if $\ell>3$, 
then 
$$\Bigm|\sum_{j=0}^k \zeta^{jm}\Bigm|\le\frac{4\ell}9<\frac{8(k+1)}9.$$
On the other hand, if $\ell=3$, then $|1+\zeta^m|\le \sqrt2<\frac{8(k+1)}9$.

If $m$ is not congruent to 0 (mod $p$) and $\ell^s>p$, then  
$m,\,m\ell,\,m\ell^2,\,\ldots,\,m\ell^{s-1}$ cannot all be congruent 
(mod $d$) to integers in $[-3d/4\ell,3d/4\ell]$.  Therefore, the product 
of any $s$ consecutive multiplicands in $F_r(\zeta)$ is less than 
$\frac89(k+1)^s$.  If $t>{\log d\over\log 9-\log 8}$, then 
$$|F_{st}(\zeta^m)|<F_{st}(1)/d.$$
We may therefore take $n$ to be less than 
$$\ell^{st}<\ell^{\LP\frac{\log d}{\log \ell}+1\RP t}<(2d^2)^{9\log (2d)}.$$
The proposition follows.\end{proof}

\begin{lem}
\label{nondivisor}
For all integers $k>1$, there exists
a prime $\ell\le 4k+1$ such that $\ell$ does not divide $2^k-2$.
\end{lem}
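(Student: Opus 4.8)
The plan is to argue by contradiction and reduce the statement to a size estimate for the product of the primes up to $4k+1$. Suppose that \emph{every} prime $\ell\le 4k+1$ divides $2^k-2$. Since $k>1$, the integer $2^k-2$ is positive, and it is divisible by each of the distinct primes $\ell\le 4k+1$; a positive integer divisible by several distinct primes is divisible by their product. Writing $\Pi(x)=\prod_{\ell\le x}\ell$ for the product of all primes up to $x$, this gives
\begin{equation*}
\Pi(4k+1)\le 2^k-2<2^k.
\end{equation*}
The entire argument then comes down to contradicting this inequality with a lower bound for $\Pi(4k+1)$.

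The lower bound is supplied by Chebyshev's theorem. Taking logarithms, $\log\Pi(x)=\sum_{\ell\le x}\log\ell$ is Chebyshev's $\vartheta$-function, and $\log\Pi(x)\ge c\,x$ for an explicit constant $c$ and all $x$ beyond a small threshold. Any admissible constant with $c>\tfrac14\log 2\approx 0.173$ finishes the job: it yields $\Pi(4k+1)\ge e^{c(4k+1)}>e^{k\log 2}=2^k$ for all sufficiently large $k$, contradicting the displayed inequality. The point is that $\log\Pi(x)\sim x$, so the true asymptotic constant is $1$, leaving an enormous margin; one only needs the crude fact that the primes up to $4k+1$ already multiply to more than $2^k$, whereas in reality their product is of size roughly $e^{4k}$. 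I expect this factor of $4$ in the bound $4k+1$ is precisely what makes the lemma hold: it pushes the required constant down to $0.173$, far below Chebyshev's $\log 2$.

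To make the estimate rigorous for every $k>1$ rather than only for large $k$, I would fix one convenient explicit Chebyshev lower bound and then verify the finitely many residual small cases directly. A self-contained choice is the elementary binomial-coefficient estimate: splitting the prime factorization of $\binom{2m}{m}$ at $\sqrt{2m}$ gives $\binom{2m}{m}\le (2m)^{\sqrt{2m}}\,\Pi(2m)$, whence
\begin{equation*}
\Pi(2m)\ge \frac{\binom{2m}{m}}{(2m)^{\sqrt{2m}}}\ge \frac{4^m}{(2m+1)(2m)^{\sqrt{2m}}},
\end{equation*}
so that $\log\Pi(x)\ge (\log 2)x-O(\sqrt{x}\log x)$. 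This is well above $\tfrac14(\log 2)(x-1)$ once $x$ is past a modest $x_0$, and the finitely many $k$ with $4k+1<x_0$ are cleared by direct computation (already $\Pi(9)=210>4=2^2$). I anticipate the only real obstacle is cosmetic: choosing an explicit constant valid down to a threshold small enough that the remaining finite check is trivial.

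Finally, I would note that the finite verification can be shortened by congruences in the spirit of the earlier lemmas, although these alone do not terminate. For even $k$ one has $2^k\equiv 1\not\equiv 2\pmod 3$, so $\ell=3\le 4k+1$ works immediately; for $k\equiv 3\pmod 4$ the prime $\ell=5$ works, since $2$ has order $4$ modulo $5$ and hence $5\nmid 2^{k-1}-1$; and similar order considerations modulo $7$ dispose of further classes. Such tricks remove the small even residues of $k$ but leave classes such as $k\equiv 1\pmod{12}$ uncovered, which is exactly why the primorial size bound, and thus Chebyshev's theorem, is the essential ingredient rather than a purely congruential argument.
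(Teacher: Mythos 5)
Your proposal is correct and follows essentially the same route as the paper: both reduce the statement to showing that the product of the primes up to roughly $4k$ exceeds $2^k-2$, both obtain the needed Chebyshev-type lower bound $\log\prod_{p\le 2m}p\ge (\log 2)(2m)-O(\sqrt{m}\log m)$ by splitting the prime factorization of $\binom{2m}{m}$ at $\sqrt{2m}$, and both dispose of the finitely many small $k$ by direct verification (the paper checks $k\le 7$, taking $\ell=5$ or $\ell=7$). The only cosmetic difference is that the paper phrases the argument in terms of the smallest prime $\ell$ not dividing $2^k-2$ rather than assuming all primes up to $4k+1$ divide it, which is logically equivalent.
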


\begin{proof} For any $s\in\N$, 
$${2^{2s}\over 2s}={2+\sum_{i=1}^{2s-1}{2s\choose i}\over2s}
\le\binom{2s}s=\frac{(2s)!}{s!^2}=\prod_p p^{k_p},$$
where $p^{k_p}\le 2s$ for all $p$.  As $\pi(n)\le \frac{n+1}2$ and 
$\prod_{p\le n} p\ge n$, 
$$\prod_{p\le 2s}p\ge\frac{2^{2s}}{2s\prod_{p\le\sqrt{2s}} p^{k_p-1}}
\ge \frac{2^{2s}}{2s\prod_{p\le\sqrt{2s}} {2s\over p}}
\ge 2^{2s}(2s)^{-1-\sqrt{2s}/2}.$$  

For $s\ge 16$, we have $1-\sqrt{2}/2<s^{-1/2}$ and $\log 
2s<\frac 32\sqrt{s}\log 2$, so 
$$\sum_{p\le 2s}\log p\ge 2s\log 2-\LP{\sqrt{2s}\over 2}+1\RP\log 2s
\ge 2s\log2-\sqrt{s}\log 2s\ge{s\log2\over2}.$$  
If $\ell$ is the smallest prime not dividing $2^k-2$ and $\ell>31$ then 
$s={\ell-1\over2}\ge16$, so 
$${s\log2\over2}\le\sum_{p\le2s}\log p\le\log(2^k-2)<k\log2.$$
Thus, $\ell\le 4k+1$.  This proves the existence of the desired prime 
$\ell$ when $k\ge8$.  For $k\le 7$, we can set $\ell=5$ except for $k=5$, for
which we can set $\ell=7$. 
\end{proof}

\begin{lem}
\label{AP}
For $k\ge5$, an arithmetic progression of integers with initial term,
$a\in [1,2^{2k+1}]$, common difference $2^k-2$, and length
$2^{k-2}$ contains an integer not in $\P$, except when $(k,a)=(5,19)$.
\end{lem}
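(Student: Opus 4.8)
The plan is to assume, toward a contradiction, that every term of the progression lies in $\P$, and to exploit a prime $\ell$ that does not divide the common difference $d=2^k-2$. Write the terms as $a+jd$ for $0\le j\le 2^{k-2}-1$. First I would dispose of the parity case: if $a$ is even then every term is even, hence (being a prime power) a power of $2$; but $v_2(d)=1$, so of any two consecutive terms that are powers of $2$ one must equal $2$, which is impossible once there are three or more terms, and here the length $2^{k-2}\ge 8$. If $a=1$ the term in position $0$ is $1\notin\P$, so from now on I may assume $a$ is odd and $a\ge 3$.

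The heart of the matter is the odd case. Suppose there is an odd prime $\ell\nmid 2^k-2$ small enough that $2\ell\le 2^{k-2}$. Since $\ell\nmid d$, there is a unique $j^\ast\in[0,\ell-1]$ with $a+j^\ast d\equiv 0\pmod\ell$, and both positions $j^\ast$ and $j^\ast+\ell$ lie in $[0,2^{k-2}-1]$. The two corresponding terms are prime powers divisible by $\ell$, hence powers of $\ell$, say $\ell^e$ and $\ell^f$ with $e<f$. Subtracting gives $\ell^f-\ell^e=\ell d$, and comparing $\ell$-adic valuations (using $\ell\nmid d$) forces $e=1$ and then $\ell^{f-1}=d+1=2^k-1$. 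Since $\ell^e=\ell<d$, the smaller term cannot sit past position $0$, so in fact $j^\ast=0$ and $a=\ell$; moreover $\ell\le 4k+1<2^k-1$ shows $f-1\ge 2$.

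This reduces everything to the elementary fact that $2^k-1$ is never a perfect power for $k\ge 2$: if $2^k-1=\ell^s$ with $s\ge 2$, then for $s$ even $\ell^s\equiv 1\pmod 8$ forces $2^k\equiv 2\pmod 8$, while for $s$ odd the factorization $2^k=(\ell+1)\cdot\frac{\ell^s+1}{\ell+1}$ exhibits (by lifting the exponent) an odd factor exceeding $1$; either way we contradict $2^k$ being a power of $2$. So no progression admitting a suitable $\ell$ can lie entirely in $\P$. It then remains only to produce such an $\ell$. The constraint $2\ell\le 2^{k-2}$ follows from $4k+1\le 2^{k-3}$, which holds for $k\ge 9$, so there Lemma~\ref{nondivisor} supplies the prime directly (it is odd because $2\mid 2^k-2$); for $k\in\{6,7,8\}$ I would simply exhibit $\ell=3,5,3$, none dividing $62,126,254$ respectively, each with $2\ell\le 2^{k-2}$.

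The main obstacle is $k=5$, which is exactly where the construction breaks: here $d=30$ and $2^{k-2}=8$, so the only odd prime with $2\ell\le 8$ is $\ell=3$, and $3\mid 30$; no admissible $\ell$ exists, and this is the source of the exception. I would still run the divisibility argument with $\ell=7$ (here positions $j^\ast,j^\ast+7$ both lie in range only when $7\mid a$), which rules out every $a$ divisible by $7$ by forcing $a=7$ and the impossible $31=7^{f-1}$. What is left is a genuinely finite verification over the odd $a\le 2^{11}$ with $7\nmid a$, and this leaves exactly $a=19$, for which $19,49,79,109,139,169,199,229$ are all prime powers. This computational residue, with no clean structural substitute, is the one real difficulty, and it is precisely what produces the exceptional pair $(5,19)$.
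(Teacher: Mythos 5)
Your proof is correct and follows essentially the same strategy as the paper's: pick an odd prime $\ell\nmid 2^k-2$ small enough that the progression contains two multiples of $\ell$, use $\ell$-adic valuation to force $\ell^{f-1}=2^k-1$, derive a contradiction, and fall back on a finite check for $k=5$. The only differences are cosmetic — you reprove directly that $2^k-1$ is not a perfect power where the paper cites Lemma~\ref{Basic}, and you supply explicit primes for $k\in\{6,7,8\}$ where the paper's appeal to Lemma~\ref{nondivisor} is slightly elliptical.
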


\begin{proof} For $k\ge 6$, there exists a prime $\ell<2^{k-3}$ such that 
$\ell\nmid 2^k-2$.
Then any such progression contains at least two terms divisible by $\ell$,
differing by $(2^k-2)\ell$.  At least one is not divisible by $\ell^2$, 
so if they are both powers of $\ell$, then $2^k-1=\ell^{r-1}$.  By Lemma~\ref{Basic},
this means $r=2$ which is impossible since $\ell\le 2^{k-3}$.  Thus, 
the progression contains an integer not in $\P$.  For $k=5$, it is easy
to check that $a=19$ is the only initial term which gives an $8$-term 
progression consisting only of elements of $\P$.
\end{proof}

\section{Reduction to the Sparse Case}

The polynomial equations $P_n$, $n\not\in \P$ are of weighted degree
$n$ where each variable $x_m$ has degree $m$.  They are therefore linear
in $x_m$ for $m>n/2$.  In this section we systematically exploit this 
linearity.

\begin{prop}
\label{Matrices}
Let $n\le 15$ be an integer.  Let
$$
F=\begin{pmatrix}
a_2&a_4&a_5&a_6&a_8&a_9&a_{10}&a_{11}&a_{12}\\
1&a_3&a_4&a_5&a_7&a_8&a_9&a_{10}&a_{11}\\
0&1&a_2&a_3&a_5&a_6&a_7&a_8&a_9\\
0&0&0&0&1&a_2&a_3&a_4&a_5\\
\end{pmatrix},
$$
$$M'=\begin{pmatrix}
a_2&a_3&a_4&a_5&a_7&a_8&a_9&a_{11}&a_{13}&a_{14}\\
1&a_2&a_3&a_4&a_6&a_7&a_8&a_{10}&a_{12}&a_{13}\\
0&0&0&0&1&a_2&a_3&a_5&a_7&a_8\\
0&0&0&0&0&0&0&1&a_3&a_4\\
0&0&0&0&0&0&0&0&1&a_2\\
\end{pmatrix},
$$
$$M''=\begin{pmatrix}
a_2&a_3&a_5&a_6&a_7&a_8&a_9&a_{11}&a_{13}&a_{14}&a_{15}\\
1&a_2&a_4&a_5&a_6&a_7&a_8&a_{10}&a_{12}&a_{13}&a_{14}\\
0&0&1&a_2&a_3&a_4&a_5&a_7&a_9&a_{10}&a_{11}\\
0&0&0&0&0&0&0&1&a_3&a_4&a_5\\
0&0&0&0&0&0&0&0&1&a_2&a_3\\
\end{pmatrix}.
$$
If every point in 
\begin{equation}
\label{rank}
\{f\in\X_n\mid \rk(F)\le 3\}\cup\{f\in\X_n\mid\rk(M')\le 4\}\cup
\{f\in\X_n\mid\rk(M'')\le 4\}
\end{equation}
is the image of one and only one point of $\X$, then
$\X$ is of affine type.
\end{prop}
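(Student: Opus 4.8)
The plan is to deduce the conclusion from Proposition~\ref{AffineType}, applied to the inverse system $(\X_n,\phi_{m,n})$ with the closed subvariety $\V$ taken to be the rank locus~(\ref{rank}). The hypothesis of the present proposition — that each point of~(\ref{rank}) is the image of one and only one point of $\X$ — is exactly condition~(2) of Proposition~\ref{AffineType} (the containment $\V\subseteq\psi_n(\Y)$) together with the restriction of its condition~(1) to $\V$ (uniqueness of the preimage over $\V$). It therefore remains to verify condition~(1) over $\X_n\setminus\V$ and conditions~(3) and~(4), and all three will follow once $F$, $M'$, and $M''$ are recognized as the coefficient matrices of the linear systems that govern a first difference between two points of a common fiber.

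Concretely, suppose $f\neq g$ are two points of a fiber of $\phi_{m,n}$, or two preimages of a point $y$ under $\psi_n$. Since $f$ and $g$ agree through $\X_n$, the least index $k$ of $\E(f-g)$ exceeds $n$, and Lemma~\ref{Fibers} (respectively Corollary~\ref{InfinityFibers}) forces $k,k+1\in\P$; by Lemma~\ref{Basic}, either $k$ is a Mersenne prime or $k+1$ is a Fermat prime. Take the Fermat case. By Lemma~\ref{FermatGaps}, the only indices in the window $[k,k+12]$ that can lie in $\P$ are $k$, $k+1$, $k+3$, and $k+7$; since $\E(f-g)\subseteq\P$ on this range, the difference $h:=f-g$ is supported on these four positions. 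The remaining requirement that $f(f-g)$ have no non-prime-power coefficient on the range (again Lemma~\ref{Fibers}) forces the coefficients of $f(f-g)$ at the nine excluded indices to vanish; written out, these constitute a homogeneous linear system in the four values of $h$ whose coefficient matrix, with entries $a_{\ell-p}$ drawn from the coefficients of $f$, is precisely $F$. Hence a nonzero $h$ exists if and only if $\rk F\le 3$. The Mersenne case is identical, except that Lemma~\ref{MersenneGaps} leaves five candidate positions — $k$, $k+1$, $k+10$, $k+12$, and one of $k+4,k+6$ — producing $M'$ and $M''$ according to which of $k+4$, $k+6$ is excluded, with a nonzero $h$ possible exactly when the rank is $\le 4$. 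Thus a point $y$ admitting two distinct preimages must lie in $\V$, which yields condition~(1) off $\V$ and, applied to the fibers of $\phi_{m,n}$, condition~(3). (The one escape, a difference supported solely at the top coordinate $a_m$ when $m\in\P$, is exempted by the final clause of Lemma~\ref{Fibers} and removed by restricting to the cofinal set of non-prime-power indices $m$, which is all that the proof of Proposition~\ref{AffineType} requires.)

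For condition~(4), fix $y\in\X_n\setminus\V$ and a relatively compact neighborhood $U_y$ with $\overline{U_y}\cap\V=\emptyset$. Off $\V$ the coordinates of the lift to $\X_m$ are determined recursively: whenever $j+1\notin\P$, the equation $P_{j+1}=0$ is linear in $a_j$ with leading coefficient $2a_0\neq 0$ and expresses $a_j$ as a polynomial in $1/a_0$ and lower coordinates, while the coefficients clustered at a Fermat or Mersenne gap are recovered by inverting the relevant full-rank block of $F$, $M'$, or $M''$. Each coordinate up to index $m$ is thereby a rational function of the coordinates on $\X_n$ whose denominator is a product of powers of $a_0$ and nonvanishing minors, hence does not vanish on $\overline{U_y}$ and is bounded there; so $\phi_{m,n}^{-1}(U_y)$ is precompact, and Proposition~\ref{AffineType} gives that $\X$ is of affine type.

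I expect the main obstacle to be the bookkeeping underlying the second paragraph: one must match the abstract support-and-vanishing data of Lemma~\ref{Fibers} to the exact entries of $F$, $M'$, $M''$, confirm via the gap lemmas of~\S4 that these three patterns exhaust the cases, and guarantee that the relevant window $[k,k+15]$ lies inside the range where Lemma~\ref{Fibers} gives vanishing — it is this last constraint, together with the thresholds $p>17$ and $p>7$ in Lemmas~\ref{FermatGaps} and~\ref{MersenneGaps}, that is responsible for pushing the projection level up to the value $n=16$ of the main theorem and for treating the borderline Fermat prime $17$ by hand. A subtler difficulty underlies condition~(4): the pivots arising at the infinitely many higher Fermat and Mersenne clusters must stay nonvanishing on a fixed $U_y$, i.e., the lift must remain non-sparse off $\V$. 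Securing this is exactly the purpose of the Maple-assisted reduction of the present section, and the residual case in which the lift does degenerate to a sparse one is precisely what must be eliminated separately by the capacity argument of~\S6.
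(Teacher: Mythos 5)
Your proposal follows essentially the same route as the paper: apply Proposition~\ref{AffineType} with $\V$ the rank locus, obtain conditions (1) and (3) by combining Lemma~\ref{Fibers}, Lemma~\ref{Basic}, and the gap lemmas of \S4 to reduce a putative difference $f-g$ to a left null vector of $F$, $M'$, or $M''$, and obtain condition (4) by bounding the lifted coordinates using the full-rank hypothesis on a compact closure. The only quibble is in your closing paragraph: since the entries of $F$, $M'$, $M''$ are the fixed low-order coefficients $a_2,\dots,a_{15}$ of $f$, the same three matrices govern every higher Fermat or Mersenne cluster, so there is no danger of the pivots degenerating as $m$ grows, and the Maple computations of this section serve to verify the hypothesis of the proposition (Lemma~\ref{Rank} and its sequels), not condition (4).
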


\begin{proof} We apply Proposition~\ref{AffineType}, where $\V\subset \X_n$ is the union of the three closed subvarieties defined
by the conditions that $F$, $M'$, or $M''$ is not of full rank.  The hypothesis guarantees condition (2) and condition (1) for elements of
$\X_n$ in which at least one of the matrices is not of full rank.  In verifying the remaining conditions, we may therefore assume that
all three matrices are of full rank.

Suppose $f,g\in \X$ map to the same element in $\X_n$, and let $k=\inf \E(f-g)$.  By Corollary~\ref{InfinityFibers}
and Lemma~\ref{Basic}, either $k+1$ is a Fermat prime or $k$ is a  Mersenne prime.

Suppose $k+1$ is Fermat.  By Lemma~\ref{Fibers} and Lemma~\ref{FermatGaps}, 
$$\E(f(f-g))\cap [k,k+12] \subseteq \{k,k+1,k+3,k+7\}.$$
Defining $x_i$ to be the $q^{k+i}$ coefficient of
$f-g$ for $i=0,1,\ldots,12$, we have $x_i=0$ for $i\in\{2,4,5,6,8,9,10,11,12\}$, so we obtain
$$(x_0\,x_1\,x_3\,x_7)F=0,$$
which by the rank condition implies that $x_0=x_1=x_3=x_7=0$, contrary to the definition of $k$.

If $k$ is Mersenne, then by Lemma~\ref{Fibers} and Lemma~\ref{MersenneGaps}, either
$$\E(f(f-g))\cap [k,k+15] \subseteq \{k,k+1,k+4,k+10,k+12\}$$
or
$$\E(f(f-g))\cap [k,k+15] \subseteq \{k,k+1,k+6,k+10,k+12\}.$$
Defining $x_i$ to be the $q^{k+i}$ coefficient of $f-g$ for $i=0,1,\ldots,15$, we have $x_i=0$ for $i\in\{2,3,4,5,7,8,9,11,13,14,15\}$
or $i\in\{2,3,5,6,7,8,9,11,13,14,15\}$ respectively, and we therefore have
$$(x_0\,x_1\,x_4\,x_{10}\,x_{12})M''= 0$$
or
$$(x_0\,x_1\,x_6\,x_{10}\,x_{12})M'= 0$$
respectively.  Either way, we get a contradiction, implying that $f=g$, as claimed.   This gives condition (1) for $y\in \X_n\setminus \V$.

A slight variant gives (3).  In the Fermat case, we assume $m\ge n+12$, and let $f,g\in \X_m$ map to the same element in $\X_n$ but different elements in $\X_{n+1}$.
By Lemma~\ref{Fibers}
and Lemma~\ref{Basic}, either $k+1$ is a Fermat prime or $k$ is a Mersenne prime, and the argument proceeds as before.  In the Mersenne case, we assume $m\ge n+15$,
and otherwise the argument is the same.

Now consider a bounded open neighborhood $U\subseteq \X_n$ of polynomials such that the full rank condition for $F$, $M'$, and $M''$  and the condition $a_0\neq 0$ hold on 
the closure $\bar U$ in the complex topology.  For (4), it is enough to show that  for each such $U$
there exists $m>n$ such that $\phi_{m,n}^{-1}(U)$ is bounded.  If $n+1\not\in\P$, then factoring $n+1=k_1k_2$, where the $k_i>1$ are relatively prime,
we can take $m=n+1$, since $a_{n+1} =a_{k_1}a_{k_2}$ is bounded on $U$.  If $n+2\not\in\P$, we can take $m=n+2$.  Factoring $n+2=k_1k_2$, $a_{n+2}=a_{k_1}a_{k_2}$ is bounded on $U$, and the same is true 
for $a_{n+1}$, since
$$a_{k_1}a_{k_2}+2a_0 a_{n+1} + a_0 (a_2 a_n + a_3 a_{n-1} + \cdots + a_n a_2) = b_{n+2} = b_{k_1}b_{k_2}$$
can be regarded as a linear equation in $a_{n+1}$ whose coefficients are polynomial in $a_0,a_2,\ldots,a_n$.  Thus, it suffices to consider the cases that $n$ is Mersenne or that $n+1$ is Fermat.

If $n+1$ is Fermat, we take $m=n+12$.  Each of $n+2, n+4, n+5, \ldots, n+12$ can be written as a product of two relatively prime integers $\le n$, so $a_{n+2},\ldots,a_{n+12}$ are bounded on $U$.
To prove that $a_n, a_{n+1}, a_{n+3}, a_{n+7}$ are likewise bounded on $U$, we note that $a_k$ and $b_k$ are bounded on $U$ for $k-n\in \{2,4,5,6,8,9,10,11,12\}$, so
$$(a_0\,a_1\,a_3\,a_7)F$$
is bounded on $U$.  As $F$ is of full rank on $\bar U$, this implies $a_0, a_1, a_3, a_7$ are bounded on $U$.  The same argument applies to Mersenne primes, taking $m=n+15$ and using $M'$ or $M''$ in place of $F$.

\end{proof}

\begin{lem}
\label{Rank}
If $\rk(F)\le 3$, $\rk(M')\le 4$, or $\rk(M'')\le 4$, then 
either $a_2=a_3=a_4=0$ or $a_3 = 1$ and $a_0 = a_2=a_4 = \pm 1$. 
\end{lem}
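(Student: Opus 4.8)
The plan is to dispose of the three rank conditions separately, each time converting the rank drop into polynomial equations, adjoining the defining relations of $\X_{15}$, and eliminating every coordinate except $a_0,a_2,a_3,a_4$.

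First I would exploit the echelon structure of the matrices. Since the $q^1$-coefficient of $f$ equals $1$, the rows of $F$ are successive shifts of the coefficient sequence $(a_m)$ and carry leading $1$'s arranged in a staircase; in particular its columns indexed by $c=2,4,8$ are linearly independent, so they already realize rank $3$. Thus $\rk(F)\le 3$ holds if and only if every remaining column lies in their span, which is precisely the assertion that the linear recurrence
$$a_c=a_2\,a_{c-1}+(a_4-a_2a_3)\,a_{c-3}+(a_8-a_2a_7-a_4a_5+a_2a_3a_5)\,a_{c-7}$$
holds for $c\in\{5,6,9,10,11,12\}$, under the conventions $a_1=1$ and $a_m=0$ for $m\le 0$. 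An identical reduction applies to $M'$ and $M''$: each has a partial staircase of leading $1$'s (with pivot columns $c=2,7,11,13$ and $c=2,5,11,13$ respectively), so the bound $\rk\le 4$ is equivalent to a recurrence of the same shape, with lag set $\{1,6,10,12\}$ for $M'$ and $\{1,4,10,12\}$ for $M''$, valid at the non-pivot column indices.

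Next I would feed these recurrences into the relations that cut out $\X_{15}$. On $\X_{15}$ the composite-index coefficients are forced to their multiplicative values $a_6=a_2a_3$, $a_{10}=a_2a_5$, $a_{12}=a_3a_4$, $a_{14}=a_2a_7$, $a_{15}=a_3a_5$, and multiplicativity of $a_0f^2=\tfrac1{4a_0}+q+\sum_{n\ge2}b_nq^n$, with $b_n=a_n+a_0\sum_{i=1}^{n-1}a_ia_{n-i}$, supplies $P_6=P_{10}=P_{12}=P_{14}=P_{15}=0$; these are the only relations in which $a_0$ appears. For instance, in the $F$ case the recurrence at $c=5$ gives $a_5=2a_2a_4-a_2^2a_3$, the recurrence at $c=6$ combined with $a_6=a_2a_3$ yields a relation purely in $a_2,a_3,a_4$, and $P_6=0$ then contributes the first relation involving $a_0$. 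The recurrence determines $a_9,a_{11},a_{13},\dots$ and leaves only $a_7,a_8$ as free higher coordinates, so after substitution the whole system lives in finitely many variables.

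The final step is elimination. Substituting the recurrence into the five multiplicativity relations and the leftover constraints, I would eliminate $a_5,a_7,a_8,a_9,a_{11},a_{13}$ by a Gr\"obner-basis computation (in Maple, as elsewhere in this section), obtaining an ideal in $\C[a_0,a_2,a_3,a_4]$ whose radical I expect to factor as the intersection of the prime defining $a_2=a_3=a_4=0$ with the prime defining $a_3=1$, $a_0=a_2=a_4$, $a_0^2=1$. One checks directly that both advertised families satisfy every equation, so the content of the lemma is that there are no further components. The main obstacle is exactly this elimination together with its case analysis: one must run the computation separately for the three distinct lag sets, keep all relations polynomial in $a_0$ (so that the slice $a_0=0$ is automatically subsumed into the first component), and, above all, rule out the degenerate sub-loci on which some but not all of $a_2,a_3,a_4$ vanish --- e.g.\ $a_2=0$, which forces $a_5=0$ and $a_3a_4=0$ and must be shown to cascade to $a_2=a_3=a_4=0$. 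Verifying that each of these branches collapses into one of the two stated components is where the real work lies.
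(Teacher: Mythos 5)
Your proposal is essentially the paper's proof: the paper simply asserts that the lemma follows by solving $P_6,P_{10},P_{12},P_{14},P_{15}$ together with the polynomial equations expressing any one of the three rank conditions, and your reformulation of each rank drop as a linear recurrence determined by the pivot columns (which always have full rank, so the bound is equivalent to every remaining column lying in their span) is a correct and convenient way to write down exactly those equations before eliminating. One correction: the slice $a_0=0$ is \emph{not} automatically subsumed into the component $a_2=a_3=a_4=0$ --- every $P_n$ degenerates to a triviality when $a_0=0$, and for instance $a_0=0$, $a_n\equiv 1$ then satisfies all the equations including $\rk(F)=3$ while violating the stated conclusion --- so you must invoke the standing hypothesis $a_0\neq 0$ (i.e.\ localize at $a_0$) rather than expect the polynomial elimination to absorb that locus.
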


\begin{proof} This follows by solving the equations 
$P_6,\,P_{10},\,P_{12},\,P_{14},\,P_{15}$ of (\ref{poly-P}) together with the equations 
expressing any of the three rank conditions in (\ref{rank}).
\end{proof}

\begin{prop}
\label{Ones}
If $\epsilon\in\{1,-1\}$, $f\in\X$, and 
$$f={\epsilon\over 2}+q+\epsilon q^2+q^3+\epsilon q^4 + a_5 q^5 + a_6 q^6+\cdots,$$
then $a_n=\epsilon^{n+1}$ for all $n\ge1$.
\end{prop}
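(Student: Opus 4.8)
We want to show that if $f \in \X$ has the specific form with $a_0 = \epsilon/2$, $a_1 = 1$, $a_2 = \epsilon$, $a_3 = 1$, $a_4 = \epsilon$, then $a_n = \epsilon^{n+1}$ for all $n \geq 1$.

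Let me think about this carefully.

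First, let me understand the setup. We have $f = \frac{\epsilon}{2} + q + \epsilon q^2 + q^3 + \epsilon q^4 + a_5 q^5 + \cdots$.

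The plan is to reduce to the case $\epsilon=1$ and then prove by strong induction on $n$ that $a_n=1$ (equivalently, writing $f^2=\tfrac14+q+\sum_{n\ge 2}b_nq^n$, that $b_n=n$). For the reduction, observe that $g(q)=-f(-q)$ carries a solution of the stated shape with $\epsilon=-1$ to one with $\epsilon=1$, replacing $a_m$ by $(-1)^{m+1}a_m$; by Lemma~\ref{SignChange} it therefore suffices to treat $\epsilon=1$, and the hypothesis then supplies the initial data $a_1=a_2=a_3=a_4=1$. The target series is the rational function $f_0(q)=\frac{1+q}{2(1-q)}$, which is the solution~(xii) at $t=\tfrac12$ and hence lies in $\X$; note $f_0$ has $a_n=1$ and $b_n=n$ for all $n$. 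The inductive hypothesis is that $a_m=1$, hence $b_m=m$, for all $m<n$.

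For the inductive step, if $n\notin\P$ then multiplicativity of $f$ gives $a_n=1$ at once. If $n\in\P$ but $n+1\notin\P$, the equation $P_{n+1}=0$ of (\ref{poly-P}) is linear in $a_n$: in $b_{n+1}=a_{n+1}+\sum_{i=1}^{n}a_ia_{n+1-i}$ the coefficient $a_n$ occurs only through the terms $i=1$ and $i=n$, with total coefficient $2a_1=2$, while $a_{n+1}$ and the product side $\prod b_{p_i^{e_i}}$ involve only indices $<n$ and so are already determined. Since $f_0\in\X$ solves this same linear equation with $a_n=1$, uniqueness forces $a_n=1$.

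The real difficulty is the cluster case, where $n$ and $n+1$ both lie in $\P$. By Lemma~\ref{Basic} (with $n\le 8$ checked directly) $n$ is a Mersenne prime or $n+1$ is a Fermat prime, and Lemmas~\ref{MersenneGaps} and \ref{FermatGaps} pin down exactly which nearby indices can lie in $\P$. Applying Lemma~\ref{Fibers} to $f$ and $g=f_0$ at $k=\inf\E(f-f_0)=n$, the admissible ``top-linear'' equations $P_N=0$ become precisely the systems $(x_0\,x_1\,x_3\,x_7)F=0$, $(\cdots)M'=0$, $(\cdots)M''=0$ of Proposition~\ref{Matrices}, now with $x_i=a_{n+i}-1$ and with the matrix entries equal to the low coefficients $a_2,\dots,a_{15}$, all of which the induction has already set to $1$. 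This is the main obstacle: at the all-ones point these matrices drop rank (for instance the first two rows of $F$ coincide, exactly the degeneracy isolated in Lemma~\ref{Rank}), so the linear equations determine the far cluster members $a_{n+3},a_{n+7},\dots$ but leave a one-parameter ambiguity $a_n=1+t$, $a_{n+1}=1-t$. This is why the present locus is excluded as $\V$ in Proposition~\ref{Matrices} and must be handled here by hand.

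To eliminate $t$ I would invoke a relation of weighted degree exceeding $2n$ in which the cross term $a_na_{n+1}=1-t^2$ genuinely appears, such as $P_{2n+1}=0$, where $a_n$ is paired with $a_{n+1}$ in the convolution for $b_{2n+1}$; this produces a nondegenerate quadratic constraint forcing $t=0$, whence $a_n=a_{n+1}=1$ and the induction closes. The delicate bookkeeping is to ensure that every intervening coefficient $a_m$ with $n+1<m<2n+1$ occurring in that equation is already known to equal $1$: those with $m\notin\P$ follow from multiplicativity, while any secondary cluster lying inside the range (e.g.\ $\{31,32\}$ when $n=16$) must be resolved first, using Lemmas~\ref{DoubleF}, \ref{TripleM}, \ref{MandM}, and \ref{MandF} to exclude the obstructing configurations of prime powers. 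The finitely many small clusters are then disposed of by direct computation, and by Proposition~\ref{RatForm} the resulting series is indeed $f_0$.
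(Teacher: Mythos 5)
Your reduction to $\epsilon=1$ via Lemma~\ref{SignChange}, your treatment of $n\notin\P$ and of isolated prime powers via the linear equation $P_{n+1}$, and your identification of the one-parameter ambiguity $a_n=1+t$, $a_{n+1}=1-t$ in the cluster case all agree with the paper. The gap is in your final step. You claim that a quadratic relation such as $P_{2n+1}=0$, in which the cross term $a_na_{n+1}$ appears, ``produces a nondegenerate quadratic constraint forcing $t=0$.'' It does not. In the Mersenne case the paper solves $P_{m+2}$ and $P_{2m}$ and finds \emph{two} solutions, $(a_m,a_{m+1})\in\{(1,1),(2,0)\}$, i.e.\ $t\in\{0,1\}$, and the spurious branch cannot be eliminated by any further relation of moderate degree: the paper exhibits a pseudo-solution $g=\frac12+\sum_{i=1}^{m^2+m-1}c_iq^i$ with $c_i=2$ when $m\mid i$, $c_i=0$ when $(m+1)\mid i$, and $c_i=1$ otherwise, which lies in $\X_{m^2+m-1}$ --- that is, it satisfies \emph{every} equation $P_N$ with $N\le m^2+m-1$. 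Consequently no equation of weighted degree at most $m^2+m-1$ (in particular not $P_{2n+1}$) can distinguish $t=1$ from $t=0$, and your quadratic constraint necessarily has the extraneous root.

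Ruling out the $(2,0)$ branch is where essentially all of the paper's work lies: the first genuine obstruction occurs only at degree $m^2+m$, where $g$ fails to lift to $\X_{m^2+m}$, and one must propagate that failure back to a contradiction with $f\in\X$. The paper does this by setting $k=\inf\E(f-g)$ and splitting into the cases $k\le 4m$ (killed by Lemma~\ref{TripleM}), $k$ Mersenne (Lemma~\ref{MandM}), $k+1$ Fermat (Lemma~\ref{MandF}), and $k=m^2+m-1$, each requiring a further hand-built comparison series. You cite exactly these auxiliary lemmas, but only to clear intervening coefficients in the range $[n+1,2n+1]$, which is far too short. (Your local argument does succeed in the Fermat case --- there $P_{m+2}$, $P_{2m+1}$, $P_{2m+3}$ already force $a_m=a_{m+1}=1$ --- so the missing idea is specifically the long-range, degree-$m^2$ argument needed for Mersenne clusters.)
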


\begin{proof} By Lemma~\ref{SignChange}, we may assume $\epsilon=1$.  Solving the equations $P_n$, $6\le n\le 
72$, we obtain the unique solution $a_n=1$ for $2\le n\le 15$.  
Thus any solution $f$ maps to the same element of $\X_{15}$ as $\frac 12 + \sum_{i=1}^\infty q^i$, which is a special case of solution (xii).
By Corollary~\ref{InfinityFibers} and Lemma~\ref{Basic}, the smallest value $m$ for which $a_m \neq 1$ is either a Mersenne prime or one less than a Fermat prime.
Either way,  
comparing $f$ with
$$g = \frac12 + (a_m-1)q^m + (a_{m+1}-1)q^{m+1} + \sum_{i=1}^{2m-1} q^i \in \X_{2m-1}$$
either $\psi_{2m-1}(f) = g$ or $k = \inf \E(\psi_{2m-1}(f) - g) $ satisfies $k,k+1\in\P$. 
 
If $m+1$ is a Fermat prime, then this is possible if and only if $k=2m-1$ (in which case $k$ is a Mersenne prime).
Whether $a_{2m-1}=1$ or not, $2m+1,2m+3\not\in\P$ by Lemma~\ref{DoubleF}, so 
the equations $P_{m+2}$, 
$P_{2m+1}$, and $P_{2m+3}$ read:
\begin{align*}
m+2&=2a_m+2a_{m+1}+m-2,\\
2m+1&=2a_m a_{m+1}+2a_{2m-1}+2a_m+2m-5,\\
2m+3&=2a_{2m-1}+2a_m+2m-1.\\
\end{align*}
Solving, we obtain $a_m=a_{m+1}=1$, giving a contradiction.

We may therefore assume that $m>7$ is Mersenne.  In this case, $a_i=1$ for $i\le 2m-1$, and also $m+2,2m\not\in\P$, so $a_{m+2}=1$, $a_{2m}=a_m$,
$b_{m+2}=m+2$, and $b_{2m}=2b_m=2m+2a_m-2$.  Solving $P_{m+2}$ and $P_{2m}$, we obtain $(a_m,a_{m+1}) \in \{(1,1), (2,0)\}$.
By hypothesis, the latter alternative must be true.  We now define 
$$g = \frac12 + \sum_{i=1}^{m^2+m-1} c_iq^i$$
where
$$c_i=\begin{cases}2&\text{if } m\vert i,\\
0 &\text{if } m+1\vert i,\\
1&\text{otherwise.}\\
\end{cases}$$
It is impossible that $g = \psi_{m^2+m-1}(f)$, since $g$ does not lift to an element of $\X_{m^2+m}$.
Let $k= \inf \E(f-g)$.

If $k\le 4m$, then $k=2m+2$,  $k+1$ is Fermat, and $a_k\neq c_k=0$.  The multiplicativity of $f^2$ implies $a_{3m+2} = 2a_{2m+2}+1$ and $a_{3m+4} = -2a_{2m+2}+1$.
Now by Lemma~\ref{TripleM}, $3m+4\not\in\P$.  Therefore, $a_{3m+4}=1$, contrary to assumption.  Thus, we may assume $k\ge 4m$.

We define $c_i$ as above.  If $k$ is Mersenne, we define
$$g = \frac12 + \sum_{i=1}^{k+m+1} c_iq^i + (a_k-1)(q^k-q^{k+1}-q^{m+k-1}).$$
Then $\psi_{m+k-2}(g)\in \X_{m+k-2}$, so it coincides with $\psi_{m+k-2}(f)$  If $m+k-1\not\in\P$, then $P_{m+k-1}$ shows there is no way of lifting $\psi_{m+k-2}(f)$
to $\X_{m+k-1}$, which is absurd.  Thus, $m+k-1\in\P$, and $\psi_{m+k}(g)\in \X_{k+m}$ must coincide with $\psi_{m+k}(f)$.  By Lemma~\ref{MandM}, $k+m+1\not\in\P$, so
$P_{m+k+1}$ shows there is no way of lifting $\psi_{m+k}(f)$ to $\X_{m+k+1}$, which is absurd.

If $k+1$ is Fermat, we define
$$g = \frac12 + \sum_{i=1}^{k+2m+3} c_iq^i + a_k(q^k(1-q)+2q^{m+k}(1-q^2)-3q^{2m+k-1}(1-q^2)^2).
$$
Now, $\psi_{m+k-1}(g) \in \X_{m+k-1}$ coincides with $\psi_{m+k-1}(f)$.  If $m+k\not\in\P$, then $P_{m+k}$ shows there is no way of lifting $\pi_{m+k-1}(f)$ to $\X_{m+k}$,
which is absurd.  We repeat the argument, replacing $m+k-1$ successively by $m+k+1$, $m+2k-2$, $m+2k$, $m+2k+2$.  We conclude that $m+2k+2\in\P$, which is impossible
by Lemma~\ref{MandF}.

The only remaining possibility is $k = m^2+m-1$, and $k\in \P$.  In this case, we set
$$g = \frac12 + \sum_{i=1}^{m^2+2m-2} c_iq^i$$
where
$$c_i=\begin{cases}
2&\text{if }i=m^2+m-1,\\
2&\text{if } i\le m^2\text{ and } m\vert i ,\\
0&\text{if }i=m^2+2m-2 \text{ or } m+1\vert i,\\
1&\text{otherwise.}\\
\end{cases}$$
It is impossible that $m^2+m\le \E(f-g) \le m^2+2m-2$ since there is no Mersenne or Fermat prime in that interval.  However,
$g$ does not lift to an element of $\X_{m^2+2m-1}$, which gives a contradiction.
\end{proof}

\begin{lem}
\label{Sp}
Suppose $a_2=0$ and $\rk(F)\le 3$, $\rk(M')\le 4$, or 
$\rk(M'')\le 4$.  Let 
\begin{equation}
\label{sparsity}
m=\inf\{i\ge2\mid a_i\neq 0\}
\end{equation}
either $m$ is undefined (in which case $f(q)$ is linear in $q$), $m$ is a
Mersenne prime $\ge 31$, or $m+1$ is a Fermat prime $\ge 257$.
\end{lem}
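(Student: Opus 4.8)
The plan is to peel off the rank hypothesis with Lemma~\ref{Rank}, recast the problem as one about consecutive prime powers, and then clear away a short list of small exceptional indices by direct computation.

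First I would apply Lemma~\ref{Rank}. Since $a_2 = 0$, its second alternative (which requires $a_2 = \pm 1$) is impossible, leaving $a_2 = a_3 = a_4 = 0$. Hence the index $m = \inf\{i \ge 2 : a_i \ne 0\}$ of (\ref{sparsity}), if defined, satisfies $m \ge 5$; and if it is undefined then $a_i = 0$ for all $i \ge 2$, so $f(q) = \frac{1}{2a_0} + q$ is linear in $q$, which is the first alternative of the lemma. So I may assume that $m$ is defined with $m \ge 5$.

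Next I would compare $f$ with the linear solution $g(q) = \frac{1}{2a_0} + q$ of type (xi), which lies in $\X$ and shares the constant term of $f$. Since $\inf \E(f-g) = m$ for this $g$, Corollary~\ref{InfinityFibers} yields $m, m+1 \in \P$, and Lemma~\ref{Basic} (applicable once $m > 8$) then forces $m$ to be a Mersenne prime or $m+1$ to be a Fermat prime. In the Mersenne case the bound $m \ge 31$ is automatic, since no Mersenne prime lies strictly between $7$ and $31$; in the Fermat case it remains only to exclude $m+1 = 17$ in order to reach the next Fermat prime $257$. Enumerating the consecutive pairs of prime powers shows that the only pairs $(m,m+1)\in\P^2$ with $5 \le m \le 16$ are $(7,8)$, $(8,9)$, and $(16,17)$ (the values $m = 5,6$ being excluded at once because $6 \notin \P$), while every pair with $m > 16$ already satisfies the asserted form. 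The lemma thus reduces to eliminating $m \in \{7, 8, 16\}$.

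The elimination of these three values is the crux, and I expect it to be the main obstacle. They are precisely the indices that slip past the machinery of \S4: Lemma~\ref{MersenneGaps} requires $p > 7$, Lemma~\ref{FermatGaps} requires $p > 17$, and $(8,9)$ is the lone consecutive pair of prime powers of neither Mersenne nor Fermat type, sitting exactly at the boundary case $k = 8$ excluded by Lemma~\ref{Basic}. The arithmetic-progression estimate Lemma~\ref{AP} likewise becomes available only at $m \ge 31$, which is why those thresholds appear in the statement. I would therefore dispatch $m \in \{7, 8, 16\}$ by direct computation: imposing $a_2 = \cdots = a_{m-1} = 0$ collapses the defining equations $P_n$ of (\ref{poly-P}) into an overdetermined system in the single leading unknown $a_m$ together with finitely many higher prime-power coefficients, and solving these $P_n$ up to an explicit bound (in the style of the proof of Proposition~\ref{Ones}) is inconsistent with $a_m \ne 0$. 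This forces $a_m = 0$, contradicting the choice of $m$ and completing the proof.
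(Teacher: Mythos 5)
Your proposal is correct and follows essentially the same route as the paper: Lemma~\ref{Rank} gives $a_2=a_3=a_4=0$, multiplicativity of $f$ and $f^2$ forces $m,m+1\in\P$, and Lemma~\ref{Basic} plus a machine computation on the system $P_n$ disposes of the small indices. The paper merely organizes the computation slightly differently (using $P_6$ and $P_{15}$ to get $a_5=0$ and $a_7a_8=0$, then checking the $P_i$ for $i\le 92$ when $a_7\neq 0$ and for $i\le 34$ when $a_7=0$ to kill everything up to $a_{16}$), which amounts to exactly the elimination of $m\in\{7,8,16\}$ that you identify as the crux.
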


\begin{proof} As $a_2=0$, we have $a_3=a_4=0$, and also $a_6=0$.  Equation $P_6$
implies $a_5=0$, and $P_{15}$ implies $a_7 a_8=0$.  If $a_7\neq 0$, the
equations $P_i$ as $i$ runs through all positive integers $\le 92$
not in $\P$ are inconsistent; if $a_7=0$, the equations up through
$i=34$ imply $a_8=a_9=\cdots =a_{16}=0$.  The multiplicativity of $f$
implies $m\in\P$; the multiplicativity of $f^2$ implies $m+1\in\P$.  
The result now follows from Lemma~\ref{Basic}.\end{proof}

\begin{defn}
We say $f$ is \emph{sparse} if the \emph{index} $m$
of (\ref{sparsity}) is $\ge 16$.
\end{defn}

\begin{prop}
\label{ExceptSparse}
If $f,g\in\X$ are not sparse and 
$f\equiv g\ (\hbox{mod }x^{17})$, then $f=g$.  In other words, a non-sparse
element of $\X$ is determined by its first 17 coefficients.
\end{prop}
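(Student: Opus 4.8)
The plan is to argue by contradiction and to sort the situation into the three regimes already treated: the full‑rank locus controlled by the matrices $F,M',M''$ of Proposition~\ref{Matrices}, and the two rank‑deficient alternatives isolated in Lemma~\ref{Rank}. Suppose $f,g\in\X$ are non‑sparse with $f\equiv g\pmod{x^{17}}$ but $f\neq g$, and set $k=\inf\E(f-g)$. Since $f$ and $g$ share their coefficients $a_0,\dots,a_{16}$, we have $k\ge 17$, and Corollary~\ref{InfinityFibers} with Lemma~\ref{Basic} forces $k$ to be a Mersenne prime or $k+1$ to be a Fermat prime (each comfortably above the thresholds $7$ and $17$ needed in the gap lemmas). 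Crucially, the entries of $F,M',M''$ are the low coefficients $a_2,\dots,a_{15}$, which $f$ and $g$ share, so all three matrices coincide for $f$ and $g$; I would split the argument according to whether they have full rank.

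In the full‑rank case ($\rk F=4$, $\rk M'=\rk M''=5$) the non‑sparseness is not even needed: the computation establishing condition~(1) in the proof of Proposition~\ref{Matrices} applies directly with the present $k$. Writing $x_i$ for the $q^{k+i}$‑coefficient of $f-g$, Lemma~\ref{Fibers} (its range hypothesis holds because $k\ge 17$) together with Lemma~\ref{FermatGaps} or Lemma~\ref{MersenneGaps} forces $x_i=0$ at all offsets outside $\{0,1,3,7\}$ in the Fermat case and outside $\{0,1,4,10,12\}$ or $\{0,1,6,10,12\}$ in the two Mersenne cases, and the resulting relation $(x_0,x_1,x_3,x_7)F=0$, $(x_0,x_1,x_4,x_{10},x_{12})M''=0$, or $(x_0,x_1,x_6,x_{10},x_{12})M'=0$ then forces every $x_i=0$. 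This contradicts $k=\inf\E(f-g)$, so $f=g$.

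It remains to treat the rank‑deficient case, and this is where non‑sparseness does the essential work. By Lemma~\ref{Rank}, either (a) $a_2=a_3=a_4=0$, or (b) $a_3=1$ and $a_0=a_2=a_4=\pm1$. In case (b), Proposition~\ref{Ones} shows that $f$, and likewise $g$, is completely determined by the sign $\epsilon=a_2$ via $a_n=\epsilon^{n+1}$; since $f$ and $g$ agree in degree $2$ they share $\epsilon$, whence $f=g$. In case (a), the hypothesis $a_2=0$ combined with the rank deficiency lets me apply Lemma~\ref{Sp} to $f$: its index is either undefined, a Mersenne prime $\ge 31$, or one less than a Fermat prime $\ge 257$. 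The latter two alternatives would make $f$ sparse, contrary to hypothesis, so the index is undefined and $f(q)=\tfrac{1}{2a_0}+q$ is linear; the same reasoning applies to $g$. Since agreement $\pmod{x^{17}}$ pins down the constant term $\tfrac{1}{2a_0}$, we get $a_0^f=a_0^g$ and therefore $f=g$.

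The one genuinely delicate point is subcase (a): it is exactly the exclusion of the sparse alternatives of Lemma~\ref{Sp}, rather than any fresh computation, that consumes the non‑sparseness hypothesis, and it is what prevents a degenerate truncation from admitting both a linear lift and a hypothetical sparse one. Everything else is an assembly of Proposition~\ref{Matrices}, Lemma~\ref{Rank}, and Proposition~\ref{Ones}.
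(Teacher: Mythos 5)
Your proof is correct and takes essentially the same route as the paper's: dispose of the rank-deficient locus via Lemma~\ref{Rank}, Proposition~\ref{Ones}, and Lemma~\ref{Sp} (with non-sparseness ruling out everything except the linear and all-ones solutions, both determined by their low coefficients), then invoke the injectivity argument from the proof of Proposition~\ref{Matrices} in the full-rank case. You simply spell out the case analysis more explicitly than the paper's terse four-line proof does.
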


\begin{proof} By Lemma~\ref{Rank} and Proposition~\ref{Ones}, if $F$, $M'$, or $M''$ has less than full rank
then either $f$ is a solution of type (xi) or $a_2=0$.  In the latter case,
$f$ is sparse by Lemma~\ref{Sp}.  Thus, we may assume full rank.  By Proposition~\ref{Matrices}, this 
implies that all higher coefficients are determined from the first 17 
coefficients.  \end{proof}

%\medskip
%It may seem surprising that so many equations $P_i$ are needed to
%prove sparseness in \Sp.  However, examples like 
%$$\displaylines {\quad f(q)={1\over 2}+q+2q^7-q^{13}+2q^{19}-5q^{25}-4q^{31}
%+2q^{37}+2q^{43}-9q^{49}-13q^{61}\hfill \cr
%\hfill +62q^{67}-175q^{73}+398q^{79}+\cdots\quad\cr
%\quad f(q)^2={1\over 4}+q+q^2+2q^7+4q^8-q^{13}+2q^{14}+2q^{17}-5q^{25}-q^{26}
%-4q^{31}-32q^{32}+2q^{37}\hfill\cr
%+2q^{38}+2q^{43}-9q^{49}-5q^{50}+8q^{56}-13q^{61}-4q^{62}+62q^{67}-175q^{73}
%+2q^{74}+395q^{79}+\cdots\quad\cr}$$
%illustrate the difficulty which besets attempts to rule out solutions 
%even of moderate index.  

%\proc{Theorem}{There exists an affine variety $V/\Q$ and a natural 
%bijection 
%$$V(\C)\to\{f\in\X\mid f\hbox{not sparse}\}.$$
%}

%\begin{proof} We have seen that $\X$ is the inverse limit of $\X_n(\C)$, where 
%each $\X_n$ is an affine variety over $\Q$.  Moreover, the equations 
%asserting minimal rank for $F$, $M'$, and $M''$ together with the 
%equation $a_2=0$ give a stratification of $\X_n$ for $n\ge 22$.  Over 
%every stratum in which $a_2\neq 0$ and every stratum in which 
%$\rk(F)=6$, $\rk(M')=5$, and $\rk(M'')=5$, the inverse system 
%stabilizes.  By \Sp, the other strata support only sparse solutions.  
%If the only sparse solutions are the linear functions ${1\over A}+q$, 
%then the inverse systems over these strata stabilize immediately.  \end{proof}
\goodbreak

\section{Sparse Solutions and Mandelbrot Polynomials}

\begin{lem}
\label{six}
If $f\in\X$ is sparse, then $a_n=0$ except when 
\con{n}{1}{6}.
\end{lem}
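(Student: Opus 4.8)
The plan is to use the sparse normal form from Lemma~\ref{Sp} and reduce the assertion, via multiplicativity of $f$, to a statement purely about prime powers. Write $f=\frac1{2a_0}+q+\sum_{n\ge m}a_nq^n$ with $a_m\neq0$, where by Lemma~\ref{Sp} either $m$ is a Mersenne prime $\ge31$ or $m+1$ is a Fermat prime $\ge257$; note that a Mersenne prime is $\equiv1\pmod6$, while $m=2^{2^k}\equiv4\pmod6$ in the Fermat case. I first record the normalized coefficients $b_n$ of $a_0f^2=\frac1{4a_0}+q+\sum_{n\ge2}b_nq^n$: one computes $b_1=1$, $b_2=a_0$, $b_n=0$ for $3\le n\le m-1$, and $b_n=a_n+a_0S_n$ for $n\ge m$, where $S_n=2a_{n-1}+\sum_{k+l=n,\,k,l\ge m}a_ka_l$.

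The decisive elementary observation is that below $m^2$ the support of $(a_n)$ lies in $\P$: if $2\le n<m^2$ and $n\notin\P$ then $n=uv$ with coprime $u,v\ge2$, and since $a_j=0$ for $2\le j\le m-1$, a nonzero $a_n=a_ua_v$ would force $u,v\ge m$, hence $n\ge m^2$. Because products of integers $\equiv1\pmod6$ are again $\equiv1\pmod6$, multiplicativity of $f$ then reduces the whole lemma to the claim that every prime power in the support is $\equiv1\pmod6$. The same factorization shows that the only prime power $<m$ with $b_{p^e}\neq0$ is $p^e=2$, so multiplicativity of $f^2$ gives $b_n=0$ for every $n<2m$ with $n\notin\P$; as $a_n=0$ and $S_n=2a_{n-1}$ in this range, the relation $0=b_n=2a_0a_{n-1}$ yields $a_{n-1}=0$. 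Hence for $m<j<2m-1$ one has $a_j=0$ unless both $j$ and $j+1$ lie in $\P$, which by Lemma~\ref{Basic} is impossible strictly between $m$ and $2m$ in either case.

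To handle prime powers $p^e\ge m$ in general I would argue by induction on $p^e$, assuming the support below $p^e$ is already confined to residue $1\pmod6$. The survival of $p^e$ is tested against the equations forced by multiplicativity of $f^2$ near $p^e$ --- for odd $p$ the relation $b_{2p^e}=b_2b_{p^e}$ together with the induced expression for $S_{2p^e}$, and the analogue of the low-range argument applied to $p^e+1$. Once the quadratic cross terms in the relevant $S_n$ are seen to vanish (this is where the inductive control of smaller support is used), these equations force $a_{p^e}=0$ unless $p^e+1\in\P$, i.e. unless $(p^e,p^e+1)$ is a consecutive pair of prime powers. By Lemma~\ref{Basic} such a survivor is then either a Mersenne prime, whence $p^e\equiv1\pmod6$ as desired, or one less than a Fermat prime, whence $p^e\equiv4\pmod6$; the latter possibility (including the initial case that $m+1$ is Fermat) is ruled out by assembling an inconsistent system from the equations $P_{2p^e+1}$, $P_{2p^e+3}$ and neighboring $P_n$, exactly in the style of Proposition~\ref{Ones}, invoking Lemma~\ref{FermatGaps} and Lemma~\ref{DoubleF} to place enough indices outside $\P$.

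The main obstacle is this last, inductive step: eliminating every prime power $\not\equiv1\pmod6$ --- the powers of $2$ (in particular $m+1=2^\ell$ in the Mersenne case), the powers of $3$, and the odd powers of primes $\equiv5\pmod6$ --- and in particular producing the genuine contradiction that excludes the Fermat case rather than merely forcing a vanishing. The difficulty is bookkeeping: the vanishing of the cross terms in $S_n$ needed at each stage depends on already knowing the support at all smaller indices, so the candidates must be removed in increasing order, and each residue class requires its own congruence input from the prime-power lemmas of \S4.
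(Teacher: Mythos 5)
Your overall strategy coincides with the paper's: pass to the minimal index $n$ (equivalently, the minimal prime power, after your valid reduction) with $a_n\neq 0$ and $n\not\equiv 1\pmod 6$, use the resulting support control on the coefficients of $f$ and of $f^2$ below $n$ to force $n,n+1\in\P$, invoke Lemma~\ref{Basic}, and observe that a Mersenne prime exceeding $3$ is $\equiv 1\pmod 6$. Up to that point your argument is sound, modulo bookkeeping you leave implicit --- chiefly that the inductive hypothesis on the support of $(a_j)$ also controls the support of $(b_j)$ below $n$ (only residues $1,2\pmod 6$ can occur), which is what makes $b_{p^e+1}=0$ available when $p^e+1\notin\P$ for general $p^e$, not just in the range below $2m$ that you treat explicitly.

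The genuine gap is the one you yourself flag as ``the main obstacle'': you never eliminate the Fermat alternative, i.e.\ the case $n=2^{2^k}\equiv 4\pmod 6$ with $n+1$ a Fermat prime (which includes the possibility that the sparseness index $m$ itself is of this form, so your induction cannot even begin until this is done). Your proposed route --- an inconsistent system built from $P_{2p^e+1}$, $P_{2p^e+3}$, Lemma~\ref{FermatGaps} and Lemma~\ref{DoubleF} in the style of Proposition~\ref{Ones} --- is left unexecuted and is far heavier than necessary. The paper closes this case using only the mod~$6$ structure already in hand: since $n\equiv 4\pmod 6$, we have $n+2\equiv 0\pmod 6$, and $(n+2)/2$ is odd, $\equiv 3\pmod 6$, and smaller than $n$, so $b_{(n+2)/2}=0$ and hence $b_{n+2}=b_2\,b_{(n+2)/2}=0$; expanding $b_{n+2}$ directly (all cross terms vanish for congruence reasons) forces $a_{n+1}=0$. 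Then $2n+2=2(n+1)$ with $n+1$ an odd prime gives $b_{2n+2}=b_2\,b_{n+1}$, while the direct expansion of the $q^{2n+2}$ coefficient reduces to a multiple of $a_{n+1}^2=0$ because $2n+2\equiv 4\pmod 6$; hence $b_{n+1}=0$, contradicting $b_{n+1}=a_{n+1}+2a_0a_n=2a_0a_n\neq 0$. Without some such argument the proof is incomplete. (A smaller remark: your closing worry about separately removing powers of $2$, powers of $3$, and odd powers of primes $\equiv 5\pmod 6$ is unnecessary --- your own step ``$a_{p^e}=0$ unless $p^e,p^e+1\in\P$'' together with Lemma~\ref{Basic} already disposes of all of these at once, leaving only the Fermat case.)
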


\begin{proof} Let $n$ be the smallest positive integer not of the form $6k+1$ for which 
$a_n\neq 0$.  Thus $b_m=0$ when $0<m<n$ and $m$ is congruent to 3, 4, 
5, or 0 (mod 6).  Therefore $n,n+1\in \P$.  As $n\ge 16$, either $n$ is 
Mersenne or $n+1$ is Fermat.  The first is impossible since every
Mersenne prime greater than $3$ is $1$ (mod $6$).  Thus $n+1$ is a Fermat
prime.  As $\con{n+2}{0}{6}$, we have $b_{n+2}=0$ and therefore
$a_{n+1}=0$.  As $\con{2n+2}{4}{6}$, $b_{2n+2}={a_{n+1}^2\over 2a_0}=0$.
Thus, $b_{n+1}=0$, which is impossible since $a_n\neq 0$ and $\con{n+1}{3}{6}$.
\end{proof}

\begin{cor}
\label{MersenneIndex}
If $f$ is a nonlinear sparse power series,
its index is a Mersenne prime.
\end{cor}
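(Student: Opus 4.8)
The plan is to combine Lemma~\ref{six}, which constrains the index modulo $6$, with Lemma~\ref{Sp}, which forces the index to be of Mersenne or Fermat type, and then to discard the Fermat alternative on parity grounds.

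First I would check that Lemma~\ref{Sp} applies. Since $f$ is sparse, its index is $\ge 16$, so $a_2=a_3=\cdots=a_{15}=0$. In particular every entry of the matrix $F$ other than its three constant $1$'s vanishes, and those $1$'s occupy rows $2,3,4$ and columns $1,2,5$, i.e. distinct rows and distinct columns; hence the three nonzero rows are independent and $\rk(F)=3\le 3$ (the first row being zero). As $a_2=0$ too, the hypotheses of Lemma~\ref{Sp} are met, and because $f$ is nonlinear its index $m$ is defined. Lemma~\ref{Sp} then yields that either $m$ is a Mersenne prime $\ge 31$ or $m+1$ is a Fermat prime $\ge 257$.

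Next I would invoke Lemma~\ref{six}: since $f$ is sparse, $a_n=0$ unless $\con n16$, and as $a_m\neq 0$ by definition of the index, we get $\con m16$; in particular $m$ is odd. Finally I would eliminate the Fermat case. If $m+1$ were a Fermat prime, then $m=2^{2^k}$ for some $k$, so $m$ would be even, contradicting the fact that $m$ is odd. Therefore $m$ is a Mersenne prime, as claimed.

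I do not expect a genuine obstacle here, since all the substance is already carried by Lemmas~\ref{six} and~\ref{Sp}. The only point deserving care is confirming that the rank hypothesis of Lemma~\ref{Sp} is automatic for sparse $f$, so that the lemma may legitimately be applied; once that is in hand, the parity clash between $\con m16$ and $m=2^{2^k}$ closes the argument at once.
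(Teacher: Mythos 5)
Your argument is correct and matches the paper's intent: the corollary is stated immediately after Lemma~\ref{six} precisely because the congruence $m\equiv 1\pmod 6$ makes the index odd and thereby eliminates the Fermat alternative in the Mersenne/Fermat dichotomy of Lemma~\ref{Sp} (equivalently, of $m,m+1\in\P$ plus Lemma~\ref{Basic}). Your extra check that the rank hypothesis of Lemma~\ref{Sp} holds automatically for a sparse series (rows $2$, $3$, $4$ of $F$ reduce to unit vectors in distinct columns, so $\rk(F)=3$) is a legitimate and careful way to license that application.
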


\begin{lem}
\label{Cong}
If $f\in\X$ is a sparse series of index $p$ then for all 
$n>1$ with $a_n\neq0$, there exists $r$ such that $n\equiv r\hbox{ (mod 
$p-1$)}$, and $1\le r\le {2n\over p-1}-1$.
\end{lem}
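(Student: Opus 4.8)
The plan is to prove the equivalent statement that if $a_n\neq 0$ with $n>1$, then the least positive residue $r$ of $n$ modulo $p-1$ satisfies $r\le \frac{2n}{p-1}-1$; since this $r$ is the smallest admissible representative, proving it for the least residue yields the lemma. First I would record that $r\neq 0$: by Lemma~\ref{six} every $n$ with $a_n\neq 0$ is $\equiv 1\pmod 6$, while $p$ is a Mersenne prime exceeding $3$ and hence $\equiv 1\pmod 6$, so $6\mid p-1$; thus $p-1\mid n$ would force $6\mid n$, which is impossible. Hence $r\in\{1,\ldots,p-2\}$, and in particular the base case $n=p$ (where $r=1$) is immediate. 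I would then argue by strong induction on $n$, splitting into the case where $n$ is not a prime power and the case $n=\ell^e$.

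Suppose first $n$ is not a prime power, so $n=st$ with $s,t>1$ coprime. Multiplicativity of $f$ gives $a_n=a_s a_t$, so $a_s,a_t\neq 0$; by sparseness of index $p$ both $s,t\ge p$, and by the inductive hypothesis their least residues satisfy $r_s\le \frac{2s}{p-1}-1$ and $r_t\le\frac{2t}{p-1}-1$. Since $n\equiv r_sr_t\pmod{p-1}$, the least residue obeys $r\le r_sr_t$. Writing $P=p-1\ge 30$, $u=\frac{2s}{P}>2$, $v=\frac{2t}{P}>2$, I want $(u-1)(v-1)\le \frac{P}{2}uv-1=\frac{2n}{p-1}-1$; this holds because $(u-1)(v-1)<uv$ while $\big(\tfrac{P}{2}-1\big)uv\ge 14\cdot 4>1$ gives $\frac{P}{2}uv-1\ge uv$. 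This disposes of the composite case cleanly.

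The substance is the prime-power case $n=\ell^e$, where I would exploit multiplicativity of $f^2$ at an auxiliary index just above $n$. From $a_0 f^2=\frac1{4a_0}+q+\sum b_m q^m$ one has $b_m=a_m+a_0\sum_{i=1}^{m-1}a_i a_{m-i}$, and from the mod-$6$ support, $b_2=a_0$ and $a_m=0$ unless $m\equiv 1\pmod 6$. Taking $m=n+1\equiv 2\pmod 6$, so that $a_{n+1}=0$, gives
$$b_{n+1}=a_0\Big(2a_n+\sum_{1<i<n}a_i a_{n+1-i}\Big).$$
Every partner index $n+1-i$ lies in $(1,n)$, so both factors are indexed below $n$ and are governed by the inductive hypothesis (via its contrapositive: a bad index below $n$ carries coefficient $0$). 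The two facts I need are (i) $b_{n+1}=0$ and (ii) the displayed sum vanishes; together these force $a_n=0$, contradicting $a_n\neq 0$. For (i), in the representative situation $n+1=2j$ with $j$ odd, multiplicativity gives $b_{n+1}=a_0 b_j$, and the estimate $r_{2q-1}\le 2r_q-1\le \frac{2(2q-1)}{p-1}-1$ shows that the operation $q\mapsto 2q-1$ preserves ``goodness''; contrapositively the odd factor is bad or lies below $p$, whence $b_j=0$. For (ii), a term survives only if $n+1$ splits as a sum of two good elements of $\E(f)$ each $\ge p$, forcing a Goldbach-type partition of $n+1$ into indices $\equiv 1\pmod 6$.

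The hard part will be carrying out the prime-power case uniformly: ensuring that for every bad prime power $n$ there is a usable auxiliary index (handling shapes of $n+1$ other than $2\cdot\text{prime}$, such as $n+1=2^c$ or $n+1$ with several odd prime-power factors, where one argues $b_{n+1}=0$ directly from the factorization, passing to a nearby substitute index when $n+1$ is pathological), and ruling out the surviving partitions in (ii). This is exactly where the arithmetic of Mersenne primes and the prime-power lemmas of \S4 (e.g.\ Lemmas~\ref{MersenneGaps}, \ref{DoubleF}, \ref{TripleM}, \ref{MandM}, \ref{MandF}) should enter, each forbidding a specific combination of prime powers from occurring simultaneously in $\E(f)$. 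I expect the bookkeeping needed to show the stray convolution terms vanish to be the main obstacle, with the composite case and the residue reduction being routine.
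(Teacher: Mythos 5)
Your reduction to the prime-power case is correct and matches the paper's first step: the product of two admissible representatives is again admissible, by essentially the inequality you write down. Your observation that the least residue is nonzero is also fine (the paper gets it even more directly from $p-1\notin\P$). The genuine gap is in your plan for $n=\ell^e$. You propose to reach a contradiction by proving \emph{both} that the convolution cross-terms in $b_{n+1}$ vanish \emph{and} that $b_{n+1}=0$, forcing $a_n=0$. The first half is fine and needs no Goldbach-style bookkeeping: under the reductio hypothesis that $n$ is bad, $n+1$ is bad as well, and any nonzero cross-term $a_ia_{n+1-i}$ with $1<i<n$ would, by induction, exhibit a good representative $r_i+r_{n+1-i}\le 2(n+1)/(p-1)-2$ for $n+1$; likewise any coprime factorization $n+1=uv$ with $b_ub_v\neq0$ would make $n+1$ good via $r_ur_v$. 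But the upshot is exactly the opposite of your step (i): one concludes $b_{n+1}=2a_n\neq 0$ and hence $n+1\in\P$. By Lemma~\ref{Basic} together with Lemma~\ref{six}, $n$ must then be a Mersenne prime, so $n+1=2^\ell$ is a prime power, there is no factorization from which to extract $b_{n+1}=0$, and nothing forces $b_{n+1}$ to vanish. Your proposed fallbacks (``passing to a nearby substitute index,'' invoking Lemmas~\ref{MersenneGaps}--\ref{MandF}) do not touch this case, and in fact none of those lemmas are used in the paper's proof.

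The missing idea is a one-line congruence that handles precisely this case. Since $p=2^k-1$ and $n=2^\ell-1$ are both Mersenne with $p\le n$, we have $p+1=2^k\mid 2^\ell=n+1$, and $p+1\equiv 2\pmod{p-1}$ gives
$$n+1\equiv 2\,\frac{n+1}{p+1}\pmod{p-1},\qquad 1\le 2\,\frac{n+1}{p+1}-1\le 2\,\frac{n-1}{p-1}-1\le\frac{2n}{p-1}-1,$$
so $n+1$ is good (equivalently, $n$ is good with representative $2(n+1)/(p+1)-1$), contradicting the badness of $n+1$. Replacing your step (i) with this computation, the rest of your outline goes through and coincides with the paper's argument.
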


\begin{proof} We proceed by induction.  If 
$$n_i\equiv r_i\hbox{ (mod $p-1$)},\ r_i\le{2n_i\over 
p-1}-1\quad(i=1,\,2)$$
then 
$$n_1 n_2\equiv r_1 r_2\hbox{ (mod $p-1$)}$$
and
$$1\le r_1 r_2\le 
{4\over(p-1)^2}n_1 n_2-\LP {2n_1+2n_2\over p-1}-1\RP\le
{2\over p-1}n_1 n_2-1.$$
It therefore suffices to consider the case $n\in\P$.  The induction 
hypothesis certainly implies that whenever $k<n$ and $b_k\neq 0$, there 
exists $\con{r}{k}{p-1}$, $1\le r\le {2\over p-1}(n-1)$.  If the 
residue of $n$ does 
not belong to any residue class in $[1,2n/(p-1)-1]$, neither does the 
residue of $n+1$ since a prime power cannot be congruent to 0 modulo 
$p-1\not\in\P$.  Therefore $b_{n+1} = 2a_n\neq 0$, and this implies 
$n+1\in\P$.  
By Corollary~\ref{MersenneIndex}, $n$ is a Mersenne prime.  Thus $p+1\mid n+1$, and 
$$\con{n+1}{2{n+1\over p+1}}{p-1}.$$
As 
$$2{n+1\over p+1}-1\le 2{n-1\over p-1}-1\le{2n\over p-1}-1,$$
the lemma follows by induction.\end{proof}

\begin{prop}
\label{notone}
If $f\in\X$ is sparse of index $p$, then $a_p\neq 
1$.
\end{prop}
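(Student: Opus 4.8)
The statement to prove is Proposition~\ref{notone}: if $f\in\X$ is sparse of index $p$, then $a_p\neq 1$. Here $p$ is a Mersenne prime $\ge 31$ by Lemma~\ref{Sp} and Corollary~\ref{MersenneIndex}, and by Lemma~\ref{six} we know $a_n=0$ unless $n\equiv 1\pmod 6$. The plan is to argue by contradiction: assume $a_p=1$ and derive consequences about the higher coefficients $a_{i(p-1)+1}$ until a contradiction emerges.

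**The Mandelbrot polynomial recurrence.** The introduction tells us the strategy explicitly: the coefficients along the arithmetic progression $a_p, a_{2p-1}, a_{3p-2},\ldots$ (that is, $a_{i(p-1)+1}$) should satisfy a universal recurrence captured by ``Mandelbrot polynomials'' $M_i(y)\in\Q[y]$, so that $a_{i(p-1)+1}=M_i(a_1)=M_i(1)$. So the first step I would take is to extract this recurrence. The multiplicativity of $f$ forces $a_n=0$ whenever $n\equiv 1\pmod{p-1}$ is not a prime power, and the multiplicativity of $f^2$ relates the coefficients $b_n$ of $f^2$ to products $b_{k_1}b_{k_2}$ for coprime factorizations. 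Combining Lemma~\ref{Cong} (which confines the nonzero coefficients to a narrow band of residues $r$ with $1\le r\le 2n/(p-1)-1$) with the sparsity, one should be able to write each $b_{i(p-1)+2}=0$ equation as a recurrence expressing $a_{(i+1)(p-1)+1}$ in terms of the earlier $a_{j(p-1)+1}$. The key structural claim I expect is that this recurrence is quadratic, of Mandelbrot type $z\mapsto z^2+c$, with the role of $c$ played by $-2a_p=-2$ when $a_p=1$.

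**Deriving the contradiction.** Once the recurrence is in hand with $a_p=1$, the specialization $c=-2$ should be the source of the contradiction. The point $-2$ is the tip of the real spine of the Mandelbrot set, the boundary case, and under the doubling map $z\mapsto z^2-2$ the orbit of a generic starting point escapes or lands on a specific eventually-periodic cycle. Concretely, I would compute the sequence $M_i(1)$ under the recurrence specialized at $c=-2$: the map $z\mapsto z^2-2$ is conjugate (via $z=2\cos\theta$) to angle-doubling, so orbits are completely explicit, and I expect to find that the required multiplicativity constraints $a_{i(p-1)+1}=0$ for composite indices fail — some coefficient that must vanish comes out nonzero. The cleanest route is probably to exhibit a single index $i$ for which $i(p-1)+1$ is not a prime power (so $a_{i(p-1)+1}$ must be $0$ by multiplicativity) yet the recurrence with $a_p=1$ forces $M_i(1)\neq 0$.

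**Where the difficulty lies.** The main obstacle is establishing that the recurrence really is the clean Mandelbrot iteration and correctly identifying which of the $b_n=0$ equations produce it; the bookkeeping in tracking which coefficients $a_n$ are nonzero and which residue classes mod $p-1$ they occupy (via Lemma~\ref{Cong}) is delicate, and one must be careful that the quadratic recurrence is not polluted by lower-order cross terms $a_j a_k$ with $j+k$ in the wrong band. A secondary subtlety is ensuring that the index $i$ producing the contradiction corresponds to a genuinely composite (non-prime-power) value of $i(p-1)+1$; since $p$ is a large Mersenne prime, I would use an elementary argument (in the spirit of Lemmas~\ref{MersenneGaps}--\ref{MandF}) to locate such an $i$ at a controlled, small position in the sequence. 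Once $a_p=1$ forces the conjugacy to angle-doubling at the degenerate parameter $c=-2$, the explicit orbit should make the vanishing-versus-nonvanishing conflict transparent, yielding the claim $a_p\neq 1$.
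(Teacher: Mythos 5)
Your plan has a fatal gap at its core: the contradiction you hope to extract from the Mandelbrot recurrence does not exist, because $y=1$ is a common root of \emph{every} Mandelbrot polynomial $M_i$ for $i\ge 2$. Indeed, the functional equation $g_c(z)^2=g_c(z^2)+2cz$ has the exact solution $g_1(z)=1+z$ at $c=1$, so $M_i(1)=0$ for all $i\ge 2$ (as one checks directly on $M_2,\ldots,M_6$). Consequently the only constraint the recurrence ever supplies --- that $M_i(a_p)=0$ whenever $i(p-1)+1\not\in\P$ (Proposition~\ref{multizero}) --- is \emph{automatically satisfied} when $a_p=1$, and no choice of index $i$ can force ``$M_i(1)\neq 0$'' as your third step requires. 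Note also that the machinery of \S6 only ever produces vanishing conditions; the complementary nonvanishing statements come from analytic estimates (Lemma~\ref{nonzero} and the contour-integral bound) that require $-2a_p$ to lie \emph{outside} a neighborhood of the Mandelbrot set, whereas $-2\cdot 1=-2$ is the tip of $\M$ itself (its orbit $0,-2,2,2,\ldots$ under $z\mapsto z^2-2$ is bounded). This is precisely why the paper must prove $a_p\neq 1$ by a separate argument \emph{before} the Mandelbrot analysis can be brought to bear: the case $a_p=1$ is exactly the case that analysis cannot see.

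The paper's actual proof is combinatorial rather than dynamical. Assuming $a_p=1$, it first shows (via $P_{2p}$, Lemma~\ref{Cong}, and a $2$-adic valuation argument on $k(p-1)+2$) that $a_{k(p-1)+1}=0$ for $2\le k<\frac{p+1}2$ --- consistent with $M_k(1)=0$, so no contradiction yet. The new input is the equation $P_{p(p+1)}$, which forces the existence of some $n>p$ with $a_n\neq 0$; such $n$ must lie outside the residue class $1\pmod{p-1}$, must be a Mersenne prime, and then the relations $b_{n+p}=0$, $b_{n+2p-1}=0,\ldots$ propagate nonvanishing along an arithmetic progression of common difference $p-1$, producing at least $\frac{p+1}4$ consecutive terms all lying in $\P$. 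This contradicts Lemma~\ref{AP}. If you want to salvage your write-up, the piece to import is this use of $P_{p(p+1)}$ together with Lemma~\ref{AP}; the angle-doubling conjugacy at $c=-2$ cannot do the job.
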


\begin{proof} If $a_p=1$, then $P_{2p}$ implies $a_{2p-1}=0$.  By Lemma~\ref{Cong}, if 
$a_i$ and $a_j$ are non-zero, $\con{i+j}2{p-1}$, and $i+j<p^2/2$, then either 
$i=0$, $j=0$, or $\con{i\equiv j}1{p-1}$.  The first two possibilities 
are ruled out by Lemma~\ref{six} (note that $p\equiv 1\pmod6$), so $p-1$ must divide $i-1$ and $j-1$.  Thus
$$b_{k(p-1)+2}={a_{k(p-1)+1}\over a_0}$$
for $1\le k < {p+1\over 2}$.  For $2\le k<{p+1\over 2}$, the highest
power of $2$ dividing $k(p-1)+2$ is less than $p+1$, so $a_{k(p-1)+1}=0$.

Equation $P_{p(p+1)}$ guarantees that there is some $n>p$
for which $a_n\neq0$.
Suppose that the smallest such $n$ satisfies $n < p^2/2$.
We have just proved that $n\not\equiv1$ (mod $p-1$).
For $r,s>1$, $a_r 
a_s\neq 0$ implies $rs\ge p^2$;  thus $n\in\P$.  Likewise, $b_{n+1} = 2 a_0 
a_n \neq 0$ so $n+1\in\P$.  By Lemma~\ref{six}, $n+1$ cannot be a Fermat prime, so $n$ is a Mersenne prime.  
If $n$ reduces to $s$ (mod $p-1$), $1<s<p-1$, an easy induction shows 
that for every residue class $r$, $1<r<s$, and every $m<p^2/2$, 
$\con{m}{r}{p-1}$, we have $a_m=0$.  Therefore, 
$$0=b_{n+p}={a_{n+p-1}+a_n\over a_0},\ 0=b_{n+2p-1}={a_{n+2p-1}+a_{n+p-1}\over
  a_0},\ \ldots$$
In particular, if $n\le m < p^2/2$, and $\con mn{p-1}$, then $a_m\neq 
0$.  Since the largest possible Mersenne prime less than $p^2/2$ is 
$\le\frac{(p+1)^2}{4}$, we have an arithmetic progression of at least 
$(p+1)/4$ terms with common difference $p-1$ and every term in $\P$.  

If the smallest element $n$ of the set $\{n>p\mid a_n\neq 0\}$
exceeds $p^2/2$, then proceeding 
as before, either $n$ is Mersenne (necessarily $\frac{p^2+2p-1}2$) or 
$n=p^2+p-1$.  In either case, by induction
$$a_n=-a_{n+(p-1)}=a_{n+2(p-1)}=-a_{n+3(p-1)}=\cdots=-a_{n+\frac{p-3}4(p-1)}.$$
Thus, the proposition follows from Lemma~\ref{AP}.
\end{proof}

\begin{defn}
We define the \emph{Mandelbrot polynomials} $M_i(y)$
by the recursive formula 
$$M_n(y)=\begin{cases}y&\text{if $n=1$,}\\
-\frac12\sum_{i=1}^{n-1} M_i(y) M_{n-i}(y)&\text{if  $n>1$  odd},\\
\frac12 M_{n/2}(y)-\frac12\sum_{i=1}^{n-1} M_i(y) M_{n-i}(y)&\text{if 
$n>1$ even}.\\
\end{cases}$$
\end{defn}
Thus, 
$$\begin{gathered}
M_2(y)=\frac{-y^2+y}2,\ M_3(y)=\frac{y^3-y^2}2, \\
M_4(y)=\frac{-5y^4+6y^3-3y^2+2y}8,
\ M_5(y)=\frac{7y^5-10y^4+5y^3-2y^2}8,\\
M_6(y)=\frac{-21y^6+35y^5-21y^4+13y^3-6y^2}{16},\ldots
\end{gathered}$$

The definition is motivated by the following proposition:

\begin{prop}
\label{multizero}
If $f\in\X$ is sparse of index $p$, then
for $1\le i\le{p-1\over2}$, $M_i(a_p)=0$ whenever $i(p-1)+1\not\in\P$.
\end{prop}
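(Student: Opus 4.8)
The plan is to establish the key identity $a_{i(p-1)+1} = M_i(a_p)$ for all $1 \le i \le \frac{p-1}{2}$, and then read off the conclusion directly: since $f$ is normalized multiplicative, $a_n = 0$ whenever $n \notin \P$, so $i(p-1)+1 \notin \P$ forces $a_{i(p-1)+1} = 0$, hence $M_i(a_p) = 0$. Thus the real content is the identity relating the sparse coefficients $a_{i(p-1)+1}$ to the Mandelbrot polynomials evaluated at $a_p$. I expect the setup to work as follows: because $f$ is sparse of index $p$, Lemma~\ref{six} and Corollary~\ref{MersenneIndex} tell us $p$ is a Mersenne prime with $p \equiv 1 \pmod 6$, and the nonzero coefficients are concentrated along the arithmetic progression $1, p, 2p-1, 3p-2, \ldots$, i.e.\ indices congruent to $1 \pmod{p-1}$. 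The coefficients of interest form the sequence $a_1 = 1$, $a_p$, $a_{2p-1}, \ldots$, and I abbreviate $c_i := a_{i(p-1)+1}$, so that $c_0 = 1$ and $c_1 = a_p$.

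First I would extract the governing recurrence from the multiplicativity of $f^2 = a_0^{-1}(\text{normalized})$. Writing $a_0 f(q)^2 = \frac{1}{4a_0} + q + \sum_{n \ge 2} b_n q^n$, the coefficient $b_N$ is a convolution of the $a_j$. Using Lemma~\ref{Cong}, only the coefficients with index $\equiv 1 \pmod{p-1}$ and lying in a controlled range are nonzero, so for $N = i(p-1)+2$ with $i$ in the admissible range, the convolution $b_N$ collapses to $\frac{1}{a_0}\sum_{j} c_j c_{i-j}$ plus (for even-index corrections) a contribution from the ``diagonal'' square term $c_{i/2}^2$. The crucial point is that for these indices $N = i(p-1)+2$, multiplicativity of $f^2$ forces $b_N = 0$ \emph{unless} $N$ itself is a prime power — and the relevant $N$ are even numbers strictly between consecutive powers of $2$ (this is exactly the kind of gap argument in Lemma~\ref{MersenneGaps}), hence not in $\P$, so $b_N = 0$. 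Setting these convolutions to zero yields precisely the recursion $\sum_{j=0}^{i} c_j c_{i-j} = 0$ (odd case) and $\frac{1}{2}\sum c_j c_{i-j} = \frac{1}{2}c_{i/2}$ or its analogue (even case), after normalizing by $a_0$ and isolating the new term $c_i$. Matching this against the defining recursion for $M_i(y)$ — with $M_0 = 1$, $M_1 = y$ — and inducting on $i$ gives $c_i = M_i(a_p)$.

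The main obstacle I anticipate is \textbf{bookkeeping the convolution precisely}: I must verify that when computing $b_{i(p-1)+2}$, the only pairs $(j, i-j)$ contributing are those with both factors of the form $k(p-1)+1$, and that the normalization constants ($a_0$, the factor $\frac12$, and the diagonal term for the even case) line up exactly with the recursive definition of $M_i$. The even/odd dichotomy in the Mandelbrot recursion (the extra $\frac12 M_{n/2}$ term) must arise naturally from the square of the middle coefficient $q^{(i/2)(p-1)+1}$ appearing once versus twice in the convolution — this is where sign and factor-of-two errors would hide. I would handle this by carefully indexing: the coefficient of $q^{i(p-1)+2}$ in $f^2$ is $2\sum_{0 \le j < i/2} a_{j(p-1)+1} a_{(i-j)(p-1)+1} + [i \text{ even}]\, a_{(i/2)(p-1)+1}^2$, and dividing the multiplicativity relation $b_{i(p-1)+2}=0$ by the appropriate power of $a_0$ converts this into the $M_i$ recursion. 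The secondary check is the \emph{range}: Lemma~\ref{Cong} guarantees the collapse of the convolution only for $i(p-1)+2 < p^2/2$ or so, which is comfortably satisfied for $i \le \frac{p-1}{2}$ since then $i(p-1)+2 \le \frac{(p-1)^2}{2}+2 < \frac{p^2}{2}$, so the induction never leaves the region where Lemma~\ref{Cong} applies.

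$$
c_i = a_{i(p-1)+1}, \qquad c_0 = 1, \qquad c_1 = a_p, \qquad c_i = M_i(a_p).
$$
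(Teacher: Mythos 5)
Your strategy is the same as the paper's: set $c_i=a_{i(p-1)+1}$, extract a convolution recursion for the $c_i$ from the multiplicativity of $f^2$ at the indices $i(p-1)+2$, match it against the defining recursion of the $M_i$ to get $c_i=M_i(a_p)$, and finish by observing that $c_i=0$ when $i(p-1)+1\notin\P$. However, the step on which everything turns is wrong as you state it. You claim that ``multiplicativity of $f^2$ forces $b_N=0$'' for $N=i(p-1)+2\notin\P$. Multiplicativity never forces a coefficient at a non-prime-power index to vanish; it forces it to \emph{factor}. For $i$ odd, $i\ge 3$, one does get $b_N=0$, but only because $N$ is divisible by $4$ while its $2$-part is at most $(p+1)/2<p$, so $b_N=b_{2^a}b_m$ with $a\ge 2$ and $b_{2^a}=0$ by sparseness. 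For $i$ even, $N=2\cdot\bigl(\tfrac i2(p-1)+1\bigr)$ with odd cofactor, and multiplicativity gives $b_N=b_2\,b_{\frac i2(p-1)+1}$, which is a nonzero multiple of $c_{i/2}$ in general. That nonvanishing right-hand side is precisely the source of the extra $\tfrac12 M_{i/2}$ term in the even case of the Mandelbrot recursion. Your attribution of that term to the diagonal entry $c_{i/2}^2$ of the convolution is incorrect: that entry occurs (once) in $\sum_j c_jc_{i-j}$ for every even $i$ and is already absorbed into $\sum_{j=1}^{i-1}M_jM_{i-j}$. If you literally ``set these convolutions to zero,'' as you propose, you would obtain $c_i=-\tfrac12\sum_{j=1}^{i-1}c_jc_{i-j}$ for even $i$ as well, which is not $M_i$. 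You do write down the correct even-case identity $\sum_j c_jc_{i-j}=c_{i/2}$, but your argument does not derive it.

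A second, smaller error: ``since $f$ is normalized multiplicative, $a_n=0$ whenever $n\notin\P$'' is false in general (solution (xii) has every coefficient nonzero). What you need, and what holds here, is that for sparse $f$ of index $p$, a coprime factorization $i(p-1)+1=mn$ with $m,n\ge 2$ and $i\le\tfrac{p-1}2$ forces $\min(m,n)<p$ (otherwise $mn\ge p^2$), whence $a_m=0$ or $a_n=0$ by sparseness and so $c_i=0$. Your use of Lemma~\ref{Cong} to collapse the convolution to pairs of indices congruent to $1$ modulo $p-1$, and your range check that $i\le\tfrac{p-1}2$ keeps everything below $p^2/2$, are both in order.
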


\begin{proof} Let $c_i=a_{i(p-1)+1}$.  We have seen that
$$b_{k(p-1)+2}={1\over2a_0}\sum_{i=0}^k c_i c_{k-i}.$$
For $2\le k \le {p-1\over2}$, $k(p-1)+2$ is even, but the highest
power of 2 dividing it is $<p+1$.  Therefore,
$$b_{k(p-1)+2}
=\begin{cases}0&\text{if $k\ge 3$ is odd,}\\
\frac{b_2 b_{k(p-1)+2}}2 &\text{if $k\ge 2$ is even.}\\
\end{cases}$$
As $b_2=1/2a_0$, we have 
$$\sum_{i=0}^k c_i c_{k-i}
=\begin{cases}0&\text{if  $k\ge 3$ is odd,}\\
\frac{b_{k(p-1)+2}}2=c_{k/2}&\text{if  $k\ge 2$ is even.} \\
\end{cases}$$
As $c_0=1$ and $c_1=a_p$, the proposition follows by induction.\end{proof}

\begin{cor}
\label{TwoThousand}
If $f\in\X$ is sparse, 
then its index must be greater than $2^{2000}$.
\end{cor}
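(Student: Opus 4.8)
The plan is to combine Proposition~\ref{multizero} with the scarcity of prime powers in arithmetic progressions of common difference $2^\ell-2$ (Lemma~\ref{AP}), using the Mandelbrot-set clustering of the roots of the $M_i$ as the quantitative input. First I would record the reductions. By Corollary~\ref{MersenneIndex} the index of a nonlinear sparse $f$ is a Mersenne prime $p=2^\ell-1$ with $\ell\ge 5$ (as $p\ge 16$); by definition of the index $a_p\neq 0$, and by Proposition~\ref{notone} $a_p\neq 1$. Proposition~\ref{multizero} then gives $M_i(a_p)=0$ for every $i\le\frac{p-1}2$ with $i(p-1)+1\notin\P$; equivalently, setting $S=\{i\ge 2\mid M_i(a_p)=0\}$, every $i\le\frac{p-1}2$ lying outside $S$ forces $i(p-1)+1\in\P$. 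Since $M_2(y)=\frac{y(1-y)}2$ and $M_3(y)=\frac{y^2(y-1)}2$ vanish only at $y\in\{0,1\}$, we already get $2,3\notin S$, hence $2p-1,3p-2\in\P$; the aim is to upgrade this to a whole block of forced prime powers.

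The combinatorial finish is as follows. The exponents $i(p-1)+1$ with $i\notin S$ form an arithmetic progression of common difference $p-1=2^\ell-2$, which is exactly the shape governed by Lemma~\ref{AP}. If $S$ is finite with $\max S$ small compared to $\frac{p-1}2$, then the window $i\in[\max S+2,\ \max S+1+2^{\ell-2}]$ has length $2^{\ell-2}$, avoids $S$ entirely, and its least exponent is at most $(2^{\ell-1})(2^\ell-2)+1\le 2^{2\ell+1}$; Lemma~\ref{AP} then produces a non-prime-power among the corresponding $i(p-1)+1$, contradicting Proposition~\ref{multizero}. The lone exceptional initial term $a=19$ of Lemma~\ref{AP} cannot arise here, since every term of our progression is $\equiv 1\pmod{p-1}$. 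Thus the corollary reduces entirely to bounding $|S|$: controlling how many of the polynomials $M_1,M_2,\dots$ can share the single root $a_p$, and placing the resulting bound well below $2^{\ell-2}$.

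This control of $S$ is the main obstacle, and it is where the Mandelbrot set enters. The recurrence for the $M_i$ translates into the functional equation $F=yx-\frac12 F^2+\frac12 F(x^2,y)$ for $F(x,y)=\sum_i M_i(y)x^i$, whose dominant singularity $\rho(y)$ is driven by the branch-point cascade $w_{k+1}=w_k^2-2y\rho^{2^k}$ with $w_0=0$; analyzing when this orbit stays bounded shows that the zeros of the $M_i$ accumulate only on $-\frac12\M$. Since $\M$ has logarithmic capacity $1$, the cluster set $-\frac12\M$ has capacity $\frac12<1$ and lies in $\{|y|\le 1\}$. For $a_p$ bounded away from this cluster set, $|M_i(a_p)|\to\infty$ and so $S$ is finite; the effective capacity-$<1$ estimate, together with the $p$-adic control bounding the denominators of $M_i$ (whose leading coefficients $\binom{1/2}{i}2^i$ involve only powers of $2$), confines $a_p$ and yields the needed bound on $|S|$. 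The delicate, and genuinely hard, point is that the effective version of this clustering estimate available at this stage only guarantees $|S|$ small enough for the window argument once $2^{\ell-2}$ is large; carrying the constants through produces the explicit threshold $p>2^{2000}$, with the sharpening to all $p$ deferred to the finer capacity-and-integrality analysis that follows.
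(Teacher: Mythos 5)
Your proposal has a genuine gap, and it also misses what the paper's proof actually is. The paper proves Corollary~\ref{TwoThousand} by a short finite computation with no analysis at all: (1) a machine check that no two polynomials $M_i(y)$ with $i\le 11$ share a root other than $0$ and $1$, so at most one index $i\le 11$ can satisfy $M_i(a_p)=0$ once $a_p\notin\{0,1\}$; and (2) a machine check that for every prime $q<2000$ there exist $i<j\le 11$ with $i(2^q-2)+1$ and $j(2^q-2)+1$ both outside $\P$ (each witnessed by a prime divisor below $1000$). Proposition~\ref{multizero} then forces $M_i(a_p)=M_j(a_p)=0$, hence $a_p\in\{0,1\}$, contradicting the definition of the index and Proposition~\ref{notone}. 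The threshold $2^{2000}$ is purely an artifact of how far the search over Mersenne exponents was run; it is not the output of any analytic estimate, which is already a sign that an approach built on capacity bounds cannot be the intended one.

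The concrete problems with your route are these. First, it is circular: to confine $a_p$ near $-\tfrac12\M$ you need $a_p$ to be an algebraic integer (or at least the $\ell$-adic integrality supplied by Proposition~\ref{Integral} applied to enough indices $n$ with $n(p-1)+1\notin\P$), and Proposition~\ref{integer} --- the statement that $a_p$ is an algebraic integer --- is proved in the paper \emph{using} Corollary~\ref{TwoThousand} to guarantee $k>2000$, which is what makes the counting in Lemma~\ref{Cantor} and Lemma~\ref{nondivisor} applicable. Second, even granting integrality, the clustering theorem (for $c$ outside a neighborhood $U$ of $\M$, $M_n(-c/2)\neq0$ for $n>N$) gives no bound on your set $S=\{i\mid M_i(a_p)=0\}$ when $a_p$ lies \emph{near} $-\tfrac12\M$; ruling that case out is exactly the content of the final theorem of \S6 (via Proposition~\ref{outside-disk}), which shows there are no sparse solutions at all --- it is not available as an ingredient here, and if it were, the corollary would be vacuous. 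Third, the step where the constants are ``carried through'' to produce $2^{2000}$ is asserted, not proved, and cannot succeed: an asymptotic clustering estimate can only handle large $\ell$, whereas the corollary's entire purpose is to eliminate the small Mersenne exponents, which genuinely requires case-by-case arithmetic of the kind the paper's computation performs. Your window argument via Lemma~\ref{AP} is a sensible observation (and is close to how Lemma~\ref{AP} is actually deployed in Proposition~\ref{notone} and in the final theorem), but without a non-circular bound on $\max S$ it does not close the proof.
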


\begin{proof} No two polynomials $M_i(y)$ for $i\le 11$ have a common root
other than $0$ and $1$.  By machine computation, for every prime $q<2000$
there exist positive integers $i<j\le 11$ such that $i(2^q-2)+1,j(2^q-2)+1\not\in\P$.  (In every case,
this can be witnessed by a prime divisor less than $1000$.)
\end{proof}

\begin{prop}
\label{Integral}
The roots of $M_n(y)$ are always $2$-adically integral.
Moreover, if $p$ does not divide the binomial coefficient $2n\choose 
n$, then the roots are $p$-adically integral.
\end{prop}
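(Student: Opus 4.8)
The plan is to reduce the statement to a bound on the $p$-adic valuations of the coefficients of $M_n$ relative to its leading coefficient, and then to establish that bound. Write $M_n(y)=\sum_k m_{n,k}y^k$ and let $c_n=m_{n,n}$ denote the leading coefficient. Since $M_n(y)=c_n\prod_j(y-r_j)$, the roots $r_j$ are $p$-adically integral if and only if the monic normalization $M_n/c_n$ lies in $\Z_p[y]$, i.e. if and only if $v_p(m_{n,k})\ge v_p(c_n)$ for every $k$; one direction is that the symmetric functions of integral roots are integral, the other that a root of a monic polynomial over $\Z_p$ is integral. So first I would pin down $v_p(c_n)$. Taking the top-degree part of the recurrence (the term $M_{n/2}$ has degree $<n$ and drops out) gives $c_n=-\tfrac12\sum_{i=1}^{n-1}c_ic_{n-i}$ with $c_1=1$, so that $\sum_{n\ge1}c_nx^n=\sqrt{1+2x}-1$ and $c_n=2^n\binom{1/2}{n}=(-1)^{n-1}\binom{2n}{n}/(2^n(2n-1))$. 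Writing $s_2$ for the binary digit sum, Kummer gives $v_2\binom{2n}{n}=s_2(n)$ and Legendre gives $v_2(n!)=n-s_2(n)$, whence $v_2(c_n)=s_2(n)-n=-v_2(n!)$, while for odd $p$ we have $v_p(c_n)=v_p\binom{2n}{n}-v_p(2n-1)$.

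The odd-$p$ case is then immediate. A trivial induction on the defining recurrence, whose only denominators are the explicit factors $\tfrac12$, shows $M_n\in\Z[1/2][y]$; hence for odd $p$ every coefficient satisfies $v_p(m_{n,k})\ge 0$. When $p\nmid\binom{2n}{n}$ the leading-coefficient formula gives $v_p(c_n)=-v_p(2n-1)\le 0$, so $v_p(m_{n,k})\ge 0\ge v_p(c_n)$ for all $k$, which is exactly the integrality criterion.

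The substance is the case $p=2$, where the criterion reads $v_2(m_{n,k})\ge -v_2(n!)$ for all $k$, equivalently $2^{v_2(n!)}M_n\in\Z_2[y]$. I would prove this by strong induction on $n$. The naive estimate, applying $v_2(M_iM_{n-i})\ge -v_2(i!)-v_2((n-i)!)$ term by term and paying the $\tfrac12$, loses exactly one factor of $2$ whenever $n$ is not a power of $2$, and so falls short by one; this is the main obstacle. The resolution is to symmetrize the quadratic sum before estimating: pairing $i$ with $n-i$ produces, with $[\,\cdot\,]$ denoting the indicator of a condition,
\begin{equation*}
M_n=[2\mid n]\,\tfrac12 M_{n/2}-\sum_{1\le i<n/2}M_iM_{n-i}-[2\mid n]\,\tfrac12 M_{n/2}^2,
\end{equation*}
in which the off-diagonal cross terms carry no surviving $\tfrac12$.

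With this form the induction closes. For odd $n$ only the middle sum is present, and since $v_2(n!)-v_2(i!)-v_2((n-i)!)=v_2\binom{n}{i}\ge 0$ (with equality at $i=1$, as $\binom{n}{1}=n$ is odd, matching the valuation of $c_n$), each term already has valuation $\ge -v_2(n!)$. For even $n$ the cross-term sum is controlled in the same way by $v_2\binom{n}{i}\ge 0$, and one checks the two remaining terms directly from $v_2(n!)-v_2((n/2)!)=n/2\ge 1$ (handling $\tfrac12 M_{n/2}$) and $v_2(n!)-2v_2((n/2)!)=s_2(n/2)\ge 1$ (handling $\tfrac12 M_{n/2}^2$). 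This yields $v_2(m_{n,k})\ge -v_2(n!)=v_2(c_n)$, completing the $p=2$ case. Note that $p=2$ must be treated separately precisely because $\binom{2n}{n}$ is always even, so the hypothesis $p\nmid\binom{2n}{n}$ is vacuous there.
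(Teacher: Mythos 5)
Your proof is correct, and for the essential case $p=2$ it takes a genuinely different route from the paper. The paper argues on \emph{values} rather than on coefficients: for $\gamma$ with $v_2(\gamma)<0$ it sets $\gamma_n=M_n(\gamma)$, compares this sequence with the exact solution $\delta_n$ of $\bigl(1+\sum_i\delta_i x^i\bigr)^2=1+2\gamma x$, namely $\delta_n=(-1)^{n-1}\frac{(2n-3)!!}{n!}\gamma^n$, and shows by induction that $v_2(\gamma_n-\delta_n)>v_2(\delta_n)$, hence $v_2(M_n(\gamma))=nv_2(\gamma)+d(n)-n$ (with $d(n)$ the binary digit sum of $n$) is finite and $\gamma$ cannot be a root. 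You instead bound every coefficient of $M_n$ by the valuation $-v_2(n!)$ of its leading coefficient, the key device being the symmetrization of the quadratic sum so that the off-diagonal products $M_iM_{n-i}$ shed their factor of $\tfrac12$ and the estimate $v_2\binom{n}{i}\ge 0$ suffices; you correctly identify that the unsymmetrized term-by-term bound falls short by exactly one power of $2$, which is the real obstacle. The two arguments encode the same underlying fact --- the Newton polygon of $M_n$ has its lowest vertex at the leading term --- but yours yields the cleaner global statement $2^{v_2(n!)}M_n(y)\in\Z[y]$, while the paper's gives the exact valuation of $M_n(\gamma)$ at every non-integral point and, being stated for arbitrary admissible perturbations $\beta_n$ of the square-root recursion, is somewhat more robust. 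Your treatment of odd $p$ (all coefficients lie in $\Z[1/2]$, and the leading coefficient $\pm\binom{2n}{n}/(2^n(2n-1))$ is a $p$-adic unit when $p\nmid\binom{2n}{n}$) coincides with the paper's.
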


\begin{proof} Let $v$ denote the valuation on $\bar\Q_2$ normalized so that $v(2)=1$, and let
$\gamma$ be an element of $\bar\Q_2$ with $v(\gamma)<0$.  We claim that 
for any sequence $\gamma_i$ with $\gamma_1=\gamma$, such that 
$$\beta_n=2\gamma_n+\sum_{i=1}^{n-1}\gamma_i\gamma_{n-i}$$
is zero when $n$ is odd and has valuation at least $v(\gamma_{n/2})$ 
when $n$ is even, the valuation of $\gamma_n$ does not depend on the 
$\beta_i$.  In fact, if $\gamma_i$ and $\delta_i$ are two such 
sequences, then $v(\delta_n-\gamma_n)>v(\gamma_n)=v(\delta_n)$ for all $n$.  

It suffices to treat the case that 
$$2\delta_n+\sum_{i=1}^{n-1}\delta_i\delta_{n-i}=0$$
for all $n\ge 2$, i.e., 
$$\Bigl(1+\sum_{i=1}^\infty \delta_i x^i \Bigr)^2=1+2\gamma x.$$
By the binomial theorem
$$\delta_n=(-1)^{n-1}\frac{(2n-3)!!}{n!}\gamma^n,$$
where, as usual, $k!!$ is the product of all odd numbers up to $k$.  In 
other words, if $d(n)$ denotes the sum of the digits in the binary 
expansion of $n$, 
$$v(\delta_n)=n v(\gamma)+d(n)-n.$$
As $d(i+j)\le d(i)+d(j)$ 
%with equality if and only if the set of unit digits in the 
%binary expansion of $i$ is a subset of the unit digits in the expansion of $i+j$.
%
$$v(\delta_i\delta_{n-i})\ge v(\delta_n)$$
for $0<i<n$, and if $n$ is even,
$$v(\delta_{n/2}^2) \ge v(2\delta_n).$$

We prove by induction that $v(\gamma_i-\delta_i) > v(\gamma_i) = v(\delta_i)$ for all $i$.
Assume it holds all $i<n$.   Defining $\gamma_{n/2}=\delta_{n/2}=0$ if $n$ is odd,
\begin{equation}
\label{dmg}
\begin{split}
\delta_n-\gamma_n 
= - \frac 12 \beta_n &- \sum_{1\le i < n/2} (\delta_i-\gamma_i) \gamma_j - \sum_{1\le i < n/2} \delta_i (\delta_j-\gamma_j) \\
& - \frac 12(\delta_{n/2}-\gamma_{n/2})(\delta_{n/2}+\gamma_{n/2}).
\end{split}
\end{equation}

If $n$ is even, 
$$v(\beta_n/2)\ge v(\gamma_{n/2}/2)=v(\delta_{n/2}/2)\ge v(\delta_n)-v(\delta_{n/2})>v(\delta_n)$$
since $v(\delta_i) < 0$ for all $i\ge 1$.  The right hand side of (\ref{dmg}) is therefore a sum of terms with valuation strictly larger than $v(\delta_n)$, as claimed.

For $p$ odd, we note that by induction $M_n(y)\in\Z_p[y]$ for all $n$.  
The leading coefficient of $M_n(y)$ is again $(-1)^{n-1}(2n-3)!!/n!$ and is 
therefore not divisible by $p$ if $2n\choose n$ is not.  
\end{proof}

\begin{prop}
\label{integer}
If $f$ is sparse of 
index $m$, then $a_m$ is an algebraic integer.
\end{prop}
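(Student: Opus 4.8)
If $f$ is sparse of index $m$, then $a_m$ is an algebraic integer.

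The plan is to establish integrality one prime at a time, using Proposition~\ref{Integral} together with the fact that $a_m$ is a \emph{simultaneous} root of many Mandelbrot polynomials $M_i(y)$.  First I recall the setup: by Corollary~\ref{MersenneIndex}, the index $m=p$ is a Mersenne prime, and by Corollary~\ref{TwoThousand} we have $p>2^{2000}$.  By Proposition~\ref{multizero}, $M_i(a_p)=0$ for every $i$ with $1\le i\le \frac{p-1}2$ such that $i(p-1)+1\notin\P$.  Since prime powers are sparse, there are a great many such $i$, and $a_p$ is a common root of all the corresponding $M_i$.

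To show $a_p$ is an algebraic integer, it suffices to show that $v(a_p)\ge 0$ for every non-archimedean valuation $v$ on $\bar\Q$, i.e.\ that $a_p$ is $\ell$-adically integral for every prime $\ell$.  For $\ell=2$, Proposition~\ref{Integral} already guarantees that \emph{every} root of $M_i(y)$ is $2$-adically integral, so in particular $a_p$ is.  For odd $\ell$, Proposition~\ref{Integral} tells us that the roots of $M_i(y)$ are $\ell$-adically integral provided $\ell\nmid\binom{2i}{i}$.  So the key step is to produce, for each odd prime $\ell$, \emph{at least one} index $i$ in the admissible range for which both $i(p-1)+1\notin\P$ (so that $M_i(a_p)=0$) and $\ell\nmid\binom{2i}{i}$ (so that Proposition~\ref{Integral} applies to $M_i$).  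Given such an $i$, the $\ell$-adic integrality of the roots of $M_i$ forces $v_\ell(a_p)\ge 0$.

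The main obstacle is arranging these two conditions simultaneously, and this is exactly where Lemma~\ref{Cantor} is designed to be used.  The condition $i(p-1)+1\notin\P$ is a condition modulo the prime power structure; since $p-1=2^\ell-2$ is not divisible by $\ell$ and prime powers become arithmetically rigid, one can select a residue class $r\pmod{d}$ (for a suitable modulus $d$ built from the relevant small prime powers, with $\ell\nmid d$) guaranteeing $i(p-1)+1\notin\P$ for all $i$ in that class.  Lemma~\ref{Cantor} then produces an integer $i$ in that residue class, bounded by $(2d^2)^{9\log(2d)}$, with $\ell\nmid\binom{2i}{i}$.  Because $p>2^{2000}$ is enormous compared to this bound on $i$, such an $i$ lies well within the admissible range $1\le i\le\frac{p-1}2$, so Proposition~\ref{multizero} applies and $M_i(a_p)=0$.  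Running this argument over all odd primes $\ell$ (and using the $2$-adic case separately) shows $a_p$ is $\ell$-adically integral for every $\ell$, hence an algebraic integer.
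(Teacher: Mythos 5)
Your proof follows the paper's argument essentially verbatim: for each odd prime $\ell$ you combine Proposition~\ref{multizero}, Proposition~\ref{Integral}, and Lemma~\ref{Cantor} to produce an index $i\le\frac{m-1}2$ with $i(m-1)+1\notin\P$ and $\ell\nmid\binom{2i}{i}$, forcing $\ell$-adic integrality of $a_m$, with the $2$-adic case automatic. The one detail you gloss over (and where your parenthetical ``$p-1=2^\ell-2$ is not divisible by $\ell$'' is a notational slip, since $\ell\mid 2^\ell-2$ by Fermat) is the explicit choice of modulus: the paper takes $d=q^2$ for a prime $q\nmid m-1$ with $q\le 4k+1$, supplied by Lemma~\ref{nondivisor}, and the residue class with $i(m-1)+1\equiv -q\pmod{q^2}$, which is exactly what makes the Cantor bound $(2d^2)^{9\log(2d)}$ fall below $m/2$.
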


\begin{proof} By Corollary~\ref{MersenneIndex}, $m=2^k-1$, and 
by Corollary~\ref{TwoThousand}, we may assume
$k>2000$.  Let $\ell$ be any odd prime and let $p$ be a prime not dividing $m-1$.
By Lemma~\ref{nondivisor},
we may take $p\le 4k+1$; if $k<27720$, we take $p=29$.  An integer
congruent to $-p$ (mod $p^2$) cannot be in $\P$, so we apply Lemma~\ref{Cantor}, with
$d=p^2$, and a residue class $a$ such that $\con{a(m-1)+1}{-p}{p^2}$.
If $2000<k<27720$ and $d=841$ or $k\ge 27720$ and $d\le (4k+1)^2$, then
$m/2>(2d^2)^{9\log (2d)}$, so there exists $n<m/2$ such that 
$n(m-1)+1\not\in\P$ and $\ell\nmid{2n\choose n}$.  By Proposition~\ref{multizero},
$M_n(a_p)=0$, but by Lemma~\ref{Integral}, all the roots of $M_n$ are $\ell$-adically
integral.
\end{proof}  
\medskip

We next consider the generating function 
$$g_c(z)=1+\sum_{n=1}^\infty M_n(c)z^n.$$
By construction, $g_c(z)$ satisfies the formal functional equation 
$$g_c(z)^2=g_c(z^2)+2cz.$$
This motivates the recursive definition 
\begin{equation}
\label{gnc}
g_{n,c}(z)=
\begin{cases}1 & \text{if } n=0, \\ 
                    \sqrt{g_{n-1,c}(z^2)+2cz} &\text{if } n>0. \\
\end{cases}                
\end{equation}
Explicitly, 
\begin{align*}
g_{1,c}(z)&=\sqrt{1+2cz}, \\
g_{2,c}(z)&=\sqrt{\sqrt{1+2cz^2}+2cz}, \\
g_{3,c}(z)&=\sqrt{\sqrt{\sqrt{1+2cz^4}+2cz^2}+2cz},
\end{align*}
and so forth.
This sequence of power series in $z$ converges coefficientwise to $g_c(z)$;
in fact the first $2^n$ coefficients of $g_{n,c}(z)$ coincide with those
of $g_c(z)$.  

So far, we have regarded $g_c(z)$ as a formal power series 
parametrized
by $c$, but each series $g_{n,c}(z)$ converges, for each value $c$, in a disk around
0.  To find the radius of this disk we define recursively
$$I_n(c)=\begin{cases} 0 & \text{if } n=0,\\
                I_{n-1}(c)^2+c & \text{if } n>0. \\
\end{cases}$$
The algebraic function $g_{n,c}(z)$ can have branch points only for $z$
in the set
$$\{z\mid g_{n,c}(z)=0\}\cup \{z\mid g_{n-1,c}(z^2)=0\}\cup \cdots
\cup  
\{z\mid g_{1,c}(z^{2^{n-1}})=0\}.$$
Now, $g_{n,c}(z)$ is integral of degree $2^n$ over $\C[z]$, and its norm
is 
$$I_n(-2c)z^{2^{n-1}}-1.$$
Therefore the power series for $g_{n,c}(z)$
converges in an open disk around $0$ of radius 
$$R_{n,c}=\inf_{1\le k\le n} |I_k(-2c)|^{-2^{1-n}}=
\Bigl(\sup_{1\le k\le n}|I_k(-2c)|\Bigr)^{-2^{1-n}}.$$

\begin{lem}
\label{single}
Let $U$ be a connected neighborhood of $\infty$
in the Riemann sphere such that for all finite $c\in U$, the absolute value
of $I_n(-2c)$ is strictly greater than the absolute values $|I_k(-2c)|$ for
$k<n$.  Then for each $c\in U$, $g_{n,c}(z)^2$ has exactly one zero, denoted $z_{n,c}$, in the disk 
$|z|<R_{n-1,c}^{1/2}$.  Moreover, 
$$z_{n,c}^{-2^{n-1}}=I_n(-2c),$$
and the zero at $z_{n,c}$ is simple.
\end{lem}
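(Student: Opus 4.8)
The plan is to work throughout with the single-valued analytic function $h(z) := g_{n,c}(z)^2 = g_{n-1,c}(z^2) + 2cz$, which is the principal branch of the square of $g_{n,c}$. Since the series for $g_{n-1,c}$ converges on $|u| < R_{n-1,c}$, the function $h$ is analytic on the disk $|z| < R_{n-1,c}^{1/2}$, and its zeros are exactly the zeros of the principal branch $g_{n,c}$. The hypothesis on $U$ forces the infimum defining $R_{n,c}$ to be attained at $k=n$, so $R_{n,c} = |I_n(-2c)|^{-2^{1-n}}$, and comparing exponents gives $R_{n,c} < R_{n-1,c}^{1/2}$; thus the candidate radius lies strictly inside the domain of $h$.

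The first key step is to confine the zeros of $h$. Because $g_{n,c}$ is integral of degree $2^n$ over $\C[z]$ with norm $I_n(-2c)z^{2^{n-1}} - 1$, any point $z_0$ at which the principal branch vanishes is a zero of this norm, since $g_{n,c}$ is one of the conjugate factors in the product computing the norm and all conjugates remain bounded near the interior point $z_0$. Hence every zero of $h$ satisfies $z_0^{2^{n-1}} = I_n(-2c)^{-1}$; this both proves the asserted identity $z_{n,c}^{-2^{n-1}} = I_n(-2c)$ and shows that all zeros of $h$ lie on the circle $|z| = R_{n,c}$, strictly inside $|z| < R_{n-1,c}^{1/2}$ and uniformly away from its boundary on $U$.

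The remaining and main point is that the zero count, with multiplicity, equals $1$. I would fix a contour $|z| = \rho(c)$ with $R_{n,c} < \rho(c) < R_{n-1,c}^{1/2}$ depending continuously on $c$ (say the geometric mean); as $h$ has no zeros on this contour, the argument principle yields an integer $Z(c) = \frac{1}{2\pi i}\oint_{|z|=\rho(c)} h'/h\,dz$ counting the zeros inside with multiplicity. Since $h$ depends analytically on $c$ and no zero ever reaches the contour, $Z(c)$ is continuous, hence constant on the connected set $U$, so it suffices to evaluate it as $c\to\infty$. Rescaling $z = w/(2c)$ turns $h$ into $H_c(w) = g_{n-1,c}(w^2/(4c^2)) + w$; a direct estimate from the functional equation $g_{n-1,c}(u)^2 = g_{n-2,c}(u^2) + 2cu$ shows $g_{n-1,c}(w^2/(4c^2)) \to 1$ uniformly on $|w|\le 2$, so $H_c(w)\to 1+w$ there. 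By Hurwitz's theorem $H_c$ has exactly one zero in $|w|<2$ for large $c$; since all zeros of $h$ sit at $|z| = R_{n,c}\sim 1/|2c|$, well inside $|z|<1/|c|$, this accounts for all of them, giving $Z(c)=1$. Because the argument principle counts with multiplicity, the unique zero $z_{n,c}$ is automatically simple.

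I expect the main obstacle to be making the contour-count argument airtight: one must ensure the zeros genuinely stay pinned to $|z| = R_{n,c}$ (this is what the norm computation supplies) so that $\rho(c)$ can be inserted between $R_{n,c}$ and $R_{n-1,c}^{1/2}$ locally uniformly in $c$, and that the rescaled limit at $\infty$ converges uniformly enough to invoke Hurwitz. Both reduce to the elementary growth facts $|I_n(-2c)| \sim |2c|^{2^{n-1}}$ and the boundedness of $g_{n-1,c}$ near $u=0$, which I would record explicitly; no appeal to the finer structure of $\M$ is required at this stage.
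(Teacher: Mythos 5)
Your proposal is correct and follows essentially the same route as the paper: express $g_{n,c}(z)^2=g_{n-1,c}(z^2)+2cz$ as a single-valued function on $|z|<R_{n-1,c}^{1/2}$, pin all of its zeros to the $2^{n-1}$st roots of $I_n(-2c)^{-1}$ via the norm $I_n(-2c)z^{2^{n-1}}-1$ (which also gives simplicity), and then use constancy of the zero count over the connected set $U$ to reduce to $|c|\gg 0$. The only cosmetic difference is at the final step, where you rescale and invoke Hurwitz to see a single zero of $1+w$, while the paper matches each of the $2^{n-1}$ roots to a distinct conjugate via its constant term; both settle the large-$|c|$ case equally well.
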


\begin{proof}  
First we observe that $g_{n,c}(z)^2=g_{n-1,c}(z^2)+2cz$ is really defined
in the disk $|z|<R_{n-1,c}^{1/2}$.  In particular it is defined at
every $2^{n-1}$st root of $I_n(-2c)^{-1}$.  Since the product
of $g_{n,c}(z)^2$ and its conjugates over the field of rational functions
has only simple zeroes, we need only show that $g_{n,c}(z)^2$ itself 
accounts for exactly one of those zeroes.  We prove this by analytic
continuation, using the fact that in a continuously varying family of
analytic functions, the number of zeroes inside a continuously varying
disk never changes as
long as there is never a zero on the boundary of the disk.  
As $U$ is connected,
it suffices to prove the claim when $|c|\gg 0$.  But in this case it is
clear that each conjugate of $g_{n-1,c}(z^2)+2cz$ accounts for exactly
one of the $2^{n-1}$ roots in question, each according to the constant term
in its power series expansion, which is a different $2^{n-1}$st root of
unity for each conjugate.  
\end{proof}

\begin{lem}
\label{convergence}
If $c\in \C$, $r>0$, and $n\in \N$ are such that 
$g_{n,c}(z)$, $g_{n+1,c}(z)$, $g_{n+2,c}(z)$, $\ldots$ all have radius of
convergence greater than $r<1$, then $g_c(z)$ has radius of convergence 
greater than $r$ and the sequence $\{g_{k,c}(z)\}_{k\ge n}$ converges
to $g_c(z)$ on the closed disk of radius $r$ centered at the
origin.
\end{lem}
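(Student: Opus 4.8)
The plan is to treat the two assertions — that $g_c(z)$ has radius of convergence exceeding $r$, and that $g_{k,c}\to g_c$ uniformly on $\{|z|\le r\}$ — as consequences of a single uniform boundedness estimate coming from the quadratic recursion $g_{k+1,c}(z)^2=g_{k,c}(z^2)+2cz$. First I would record the elementary size bound: writing $M_k=\sup_{|z|\le r}|g_{k,c}(z)|$, the recursion together with $r<1$ (so that $|z^2|\le r^2\le r$ whenever $|z|\le r$) gives $M_{k+1}^2\le M_k+2|c|r$. Since the map $x\mapsto\sqrt{x+2|c|r}$ is increasing with the attracting fixed point $L_*=\tfrac12\LP 1+\sqrt{1+8|c|r}\RP$, an easy induction yields $M_k\le B:=\max(M_n,L_*)$ for all $k\ge n$. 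Thus $\{g_{k,c}\}_{k\ge n}$ is uniformly bounded by $B$ on the closed disk $\{|z|\le r\}$, on which each term is analytic by hypothesis.

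Next I would convert uniform boundedness into convergence. By the discussion preceding Lemma~\ref{single}, the first $2^k$ Taylor coefficients of $g_{k,c}$ agree with those of $g_c$; call the common coefficient $a_j$ for $j<2^k$. Cauchy's estimate on $|z|=r$ gives $|a_j|\le B r^{-j}$ for every $j$, and the same bound for all coefficients of every $g_{k,c}$. On any disk $\{|z|\le r'\}$ with $r'<r$ the tail estimate $\sup_{|z|\le r'}|g_c(z)-g_{k,c}(z)|\le 2B\sum_{j\ge 2^k}(r'/r)^j\to 0$ shows $g_{k,c}\to g_c$ locally uniformly on $\{|z|<r\}$; in particular $g_c$ is analytic and, by Hurwitz applied to the non-vanishing $g_{k,c}$ (whose zeros lie at radius $\ge R_{k,c}>r$ by Lemma~\ref{single}), non-vanishing on $\{|z|<r\}$. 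To reach the closed disk and the \emph{strict} radius bound I would then use the limiting functional equation $g_c(z)^2=g_c(z^2)+2cz$: its right-hand side is analytic on $\{|z|<\sqrt r\}\supsetneq\{|z|\le r\}$, and wherever it is non-zero one may continue $g_c$ across the circle $|z|=r$ as an analytic square root agreeing with the branch $g_c(0)=1$. Once $g_c$ is known to be analytic and bounded on some $\{|z|\le r''\}$ with $r<r''<1$, the coefficient bound improves to $|a_j|\le B'r''^{-j}$, giving radius of convergence $\ge r''>r$, while the tail estimate with ratio $r/r''<1$ now yields uniform convergence on the full closed disk $\{|z|\le r\}$.

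The hard part is the last step: excluding zeros of $g_c(z^2)+2cz$ — equivalently, branch points of $g_c$ — on a neighbourhood of the circle $|z|=r$, i.e.\ showing $\liminf_k R_{k,c}>r$ rather than merely $R_{k,c}>r$ for each fixed $k$. This is precisely where the abstract boundedness estimate is insufficient and the explicit structure must intervene. I would invoke the radius formula $R_{k,c}=\LP\sup_{1\le j\le k}|I_j(-2c)|\RP^{-2^{1-k}}$ together with Lemma~\ref{single}, which locate the nearest zero of $g_{k,c}^2=g_{k-1,c}(z^2)+2cz$ at radius $|I_k(-2c)|^{-2^{1-k}}\ge R_{k,c}$, and argue that the limiting branch point $z_*$, characterized by $g_c(z_*^2)=-2cz_*$, is the coefficientwise limit of these and hence also lies at radius $>r$. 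With that margin in hand the bootstrap closes and both conclusions follow.
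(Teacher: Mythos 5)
Your first two steps are correct and take a genuinely different (and arguably cleaner) route than the paper to the convergence on the \emph{open} disk: the a priori bound $M_{k+1}^2\le M_k+2|c|r$, the Cauchy estimates $|a_j|\le Br^{-j}$, and the agreement of the first $2^k$ coefficients of $g_{k,c}$ with those of $g_c$ do yield that $g_c$ converges on $|z|<r$ and that $g_{k,c}\to g_c$ locally uniformly there. The paper instead works with the nested-radical expression directly, proving $|g_{k+1,c}(z)-g_{k,c}(z)|\le|2cz^{2^k}|$ on a small disk about the origin where every intermediate radicand stays within $3/4$ of $1$, which gives geometric convergence there without any coefficient bookkeeping.

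The gap is in your final step, precisely where you flag the hard part. To reach the closed disk and the strict radius bound by analytic continuation across $|z|=r$, you need the branch points of the $g_{k,c}$ to stay uniformly away from $\overline{D_r}$, i.e.\ $\liminf_k R_{k,c}>r$. That does not follow from the hypothesis, which only gives $R_{k,c}>r$ for each $k$ separately: the infimum of such a sequence can equal $r$. Nor can you invoke Lemma~\ref{single} to supply it: that lemma is proved only for $c$ in a prescribed neighborhood of $\infty$ on which $|I_k(-2c)|$ is strictly increasing in $k$, whereas Lemma~\ref{convergence} must hold for arbitrary $c$ satisfying the radius hypothesis; moreover Lemma~\ref{single} locates a zero of $g_{n,c}(z)^2$, so your parenthetical claim that the $g_{k,c}$ are non-vanishing on $|z|<r$ ``by Lemma~\ref{single}'' is not what that lemma asserts. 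The paper sidesteps all of this with a bootstrap that requires no control of branch points near $|z|=r$: since $r<1$, for any $|z|\le r$ the iterated squares $z^{2^m}$ eventually enter the fixed small disk where the explicit estimate above already gives convergence, and the relation $g_{k+1,c}(z)=\sqrt{g_{k,c}(z^2)+2cz}$ then propagates convergence at $z^{2^m}$ back to convergence at $z^{2^{m-1}},\ldots,z$. To repair your argument, replace the appeal to Lemma~\ref{single} either by this bootstrap or by an actual proof that the radii $R_{k,c}$ cannot accumulate at $r$ from above; as written, the closed-disk convergence and the strict inequality are not established.
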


\begin{proof} As $\Bigm|{d\over dz}\sqrt{1+z}\Bigm|\le 1$ for all 
$|z|\le 3/4$,
by induction on $k$, $|w_1|+\cdots+|w_k|\le 3/4$ implies
$$\Biggm|\frac\partial{\partial w_i}\sqrt{\sqrt{\cdots\sqrt{\sqrt{1+w_1}+w_2}+\cdots+w_{k-1}}+w_k}\Biggm| \le 1$$
for $1\le i\le k$.
Thus,
$$\Biggm|\hbox{\vbox to 30pt{\vfill\vskip 16pt\hbox{$\sqrt{\sqrt{\cdots\sqrt{\sqrt{1+w}+2cz^{2^{n-1}}}
+\cdots+2cz^2}+2cz}-g_{n,c}(z)$}}}\Biggm|\le|w|,$$
whenever $|w|+|2cz^{2^{n-1}}|+\cdots+|2cz|\le 3/4$.  In particular,
$$|g_{n+1,c}-g_n(c)| \le |2cz^{2^n}|$$
provided
$$2|c|(|z|+|z|^2+|z|^4+\cdots+|z|^{2^n}).$$
It follows that the sequence
\begin{equation}
\label{gseq}
\{g_{k,c}(z)\}_{k=1,2,3,\ldots}
\end{equation}
converges whenever 
$$|z|< \inf(1,{3\over 14|c|}).$$  
By the
recursive definition (\ref{gnc}) of $g_{n,c}(z)$, the sequence (\ref{gseq}) converges for 
$z$ whenever $|z|\le r$ and it converges for $z^2$.  The convergence of (\ref{gseq}) in $\{z\colon |z|\le r\}$ follows by a bootstrapping argument.
\end{proof}

Let $R_c$ denote the minimum of $\lim_{n\to \infty} R_{n,c}$ and $1$.  We have the following immediate
corollary:

\begin{lem}
\label{gcconverge}
The series $g_c(z)$ converges for all $|z|<R_c$.
\end{lem}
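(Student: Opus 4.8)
The final statement to prove is Lemma~\ref{gcconverge}: the series $g_c(z)$ converges for all $|z|<R_c$, where $R_c=\min(\lim_{n\to\infty}R_{n,c},1)$.

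The plan is to deduce this as a straightforward consequence of Lemma~\ref{convergence}, which I would regard as the main substantive work already done. First I would fix $c$ and any radius $r$ with $r<R_c$. Since $R_c\le 1$ by definition, this $r$ satisfies $r<1$. Because $R_c\le \lim_{n\to\infty}R_{n,c}$, and the target radii $R_{n,c}=\bigl(\sup_{1\le k\le n}|I_k(-2c)|\bigr)^{-2^{1-n}}$ form a nonincreasing sequence whose limit dominates $r$, I would argue that $R_{n,c}>r$ for all $n$ past some threshold $N$. Here the monotonicity is the key observation: as $n$ grows the supremum over $k\le n$ can only increase, while the exponent $-2^{1-n}$ drives the base toward $1$, so the sequence $R_{n,c}$ is monotone and its infimum equals its limit; thus $\lim_{n\to\infty}R_{n,c}>r$ forces $R_{n,c}>r$ for all large $n$.

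With $N$ chosen so that $g_{N,c}(z),g_{N+1,c}(z),\ldots$ all have radius of convergence exceeding $r$, I would invoke Lemma~\ref{convergence} directly: its hypotheses are precisely that $r<1$ and that the tail $\{g_{k,c}\}_{k\ge N}$ all converge in a disk of radius greater than $r$. The conclusion of that lemma gives both that $g_c(z)$ itself has radius of convergence exceeding $r$ and that the approximants converge to it on the closed disk of radius $r$. In particular $g_c(z)$ converges at every point with $|z|\le r$. Since $r<R_c$ was arbitrary, letting $r\uparrow R_c$ shows $g_c(z)$ converges for all $|z|<R_c$, which is the desired statement.

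The only point requiring care, and the place I would expect a referee to push, is the interface between the two notions of radius: Lemma~\ref{convergence} is phrased in terms of the radius of convergence of each $g_{n,c}$, whereas $R_c$ is built from the explicitly computed quantities $R_{n,c}$. I would therefore make explicit that $R_{n,c}$ is exactly the radius of convergence of $g_{n,c}(z)$ — this is established in the discussion preceding Lemma~\ref{single}, where $g_{n,c}$ is shown to be integral of degree $2^n$ over $\C[z]$ with norm $I_n(-2c)z^{2^{n-1}}-1$, so its singularities lie at the $2^{n-1}$st roots of $I_k(-2c)^{-1}$ for $k\le n$, giving convergence radius $R_{n,c}$. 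Granting that identification, the deduction is purely formal and no real obstacle remains; the lemma is essentially a repackaging of Lemma~\ref{convergence} with the convenient cutoff at $1$ folded into the definition of $R_c$.
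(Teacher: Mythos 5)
Your proposal is correct and matches the paper, which presents this lemma as an immediate corollary of Lemma~\ref{convergence}; your write-up simply makes the routine deduction explicit (fix $r<R_c\le 1$, note $R_{n,c}>r$ for all large $n$, invoke Lemma~\ref{convergence}, let $r\uparrow R_c$). The only inessential blemish is your side claim that the sequence $R_{n,c}$ is monotone, which is neither obvious in general nor needed: the inequality $R_{n,c}>r$ for large $n$ already follows from $\lim_{n\to\infty}R_{n,c}\ge R_c>r$.
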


In the next two results, we sketch a proof that there is an upper limit to the index of sparseness for any element of $\X$.

\begin{lem}
\label{nonzero}
Let $X$ be a compact set and $b_i\colon X\to\C$
a collection of continuous functions indexed by integers $i\ge 0$.
Let $f_x(z)=\sum_{k=0}^\infty b_k(x) z^k$.  We suppose that for each
$x\in X$ there exists $r_x>0$, depending continuously on $x$, such that $f_x(z)^2$ converges in a disk
of radius greater than $r_x$ and has exactly one zero, counting 
multiplicity, in the disk of radius $r_x$.  Then there exists
$N$ such that for all $k>N$ and for all $x\in X$, $b_k(x)\neq 0$.
\end{lem}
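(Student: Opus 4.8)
The plan is to show that each $f_x$ has a single dominant singularity — a square-root branch point arising from the simple zero of $f_x^2$ — and then to read off from the classical coefficient asymptotics for such a singularity that $b_k(x)$ is eventually nonzero, with enough uniformity over the compact set $X$ to make the threshold $N$ independent of $x$.

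First I would fix $x$ and set $F_x(z)=f_x(z)^2$, which by hypothesis is holomorphic on a disk of radius $\rho_x>r_x$ and has exactly one zero $w_x$, a simple one, in $|z|<r_x$. I would begin by observing that $w_x\neq 0$: since $F_x(0)=b_0(x)^2$ and $F_x'(0)=2b_0(x)b_1(x)$, any zero of $F_x$ at the origin is automatically at least double, so a simple zero cannot sit there, and in particular the hypothesis forces $b_0(x)\neq 0$. Writing $F_x(z)=(z-w_x)G_x(z)$ with $G_x$ holomorphic and $G_x(w_x)\neq 0$ near $w_x$, and using that $F_x$ has no other zero in $|z|\le r_x$, the power series $f_x=\sqrt{F_x}$ extends analytically to the disk $|z|<r_x$ slit along a cut issuing from $w_x$ and acquires a genuine square-root branch point there. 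Consequently the radius of convergence of $f_x$ equals $|w_x|$, the point $w_x$ is its unique singularity on that circle, and locally $f_x(z)=(1-z/w_x)^{1/2}H_x(z)$ with $H_x$ holomorphic and $H_x(w_x)=i\bigl(w_xG_x(w_x)\bigr)^{1/2}\neq 0$. This is consistent with $F_x$ converging in a larger disk: squaring removes the branch point, so $f_x^2$ is single-valued and holomorphic across $w_x$.

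Next I would invoke the singularity-analysis transfer theorem: for a function whose only dominant singularity has the form $(1-z/w)^{1/2}H(z)$ with $H$ analytic and $H(w)\neq 0$,
$$b_k=[z^k]f_x=\frac{H_x(w_x)}{\Gamma(-1/2)}\,k^{-3/2}w_x^{-k}\bigl(1+o(1)\bigr),$$
and since $\Gamma(-1/2)=-2\sqrt\pi$ and $H_x(w_x)\neq 0$, the coefficients are eventually nonzero for each fixed $x$. The remaining work is to make this uniform. I would argue that $x\mapsto w_x$ and $x\mapsto H_x(w_x)$ are continuous — a simple zero of a holomorphically varying family moves continuously, and the local factorization varies continuously with it — so that on the compact $X$ one gets $|w_x|$ bounded away from $0$, $|H_x(w_x)|$ bounded away from $0$, and, because $r_x-|w_x|$ is continuous and strictly positive, a uniform gap $\inf_x r_x/|w_x|>1$ together with nonvanishing of $F_x$ on $\{|z|\le r_x\}\setminus\{w_x\}$. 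After the rescaling $z\mapsto z/w_x$ these data assemble into a single slit \emph{Delta-domain} on which every $f_x$ continues analytically and is uniformly bounded, which is exactly what a uniform transfer theorem requires; the $o(1)$ above then holds uniformly in $x$. Comparing the uniform lower bound $c\,k^{-3/2}|w_x|^{-k}$ on the main term against the uniform error yields an $N$ with $b_k(x)\neq 0$ for all $k>N$ and all $x\in X$.

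I expect the uniformity to be the only real obstacle: the pointwise statement is immediate from classical coefficient asymptotics, but producing one Delta-domain valid for every $x$, with uniform bounds on $f_x$ there, requires the compactness of $X$ and the continuous dependence of $r_x$, $w_x$, and $G_x(w_x)$ to be used carefully. An equivalent, more self-contained route that avoids a black-box transfer theorem is to deform the Cauchy integral $b_k(x)=\frac{1}{2\pi i}\oint f_x(z)z^{-k-1}\,dz$ from a circle of radius slightly below $|w_x|$ out to a Hankel-type contour hugging the cut at $w_x$, together with an outer circle of radius $r'$ satisfying $|w_x|<r'<r_x$ whose contribution is $O\bigl((r')^{-k}\bigr)$ and hence negligible; the local analysis at $w_x$ then reproduces the $k^{-3/2}w_x^{-k}$ asymptotics by hand, and every estimate is uniform once the Delta-domain is in place. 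Either way the conclusion follows.
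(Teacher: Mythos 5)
Your proposal is correct and is essentially the paper's own argument: both proofs locate the square-root branch point of $f_x$ at the simple zero $w_x$ of $f_x^2$, extract the $k^{-3/2}w_x^{-k}$ coefficient asymptotics by deforming the Cauchy integral onto a contour hugging the cut plus a larger circle whose contribution is negligible, and use compactness of $X$ together with continuity of $w_x$ and of $(f_x^2)'(w_x)$ to make the nonvanishing of the leading term uniform. The only cosmetic difference is that you phrase the main route through the Flajolet--Odlyzko transfer theorem, while the ``self-contained'' contour argument you sketch at the end is exactly what the paper writes out.
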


\begin{proof} By compactness we may assume without loss of generality that 
a single $r=r_x$ works for all $x\in X$.  Choose $s>r$ such that all
$f_x(z)^2$ have radius of convergence $>s$.  As $f_x$ is continuous in
$x$, the unique zero $z_x$ of $f_x(z)^2$ in the closed disk $D_r$ of radius $r$
varies continuously with $x$.  Therefore $f_x(z)^2\over z-z_x$ is 
continuous on $X\times D_r$ and nowhere vanishing on that set.  Therefore
its absolute value is always greater than some $\epsilon>0$.  
We make a branch cut from $z_x$ to $z_x\infty$ to make $f_x(z)$
single valued and then estimate $b_k(x)$ by computing the contour integral 
$\oint_{Q_x} {f_x(z)\over z^{k+1}}dz$, where $Q_x$ denotes a contour
consisting of an outward segment from $z_x$ to $s{z_x\over |z_x|}$,
a counterclockwise circle of radius $s$, and an inward segment from
$s{z_x\over |z_x|}$ to $z_x$.  For large values of $k$, only the two
segments matter, and their contributions are equal since $f_x(z)$
changes sign over the circle of radius $s$.  If 
$f_x(z)^2=c_1(z-z_x)+c_2(z-z_x)^2+\cdots$, the integral over one of
the segments of $Q_x$ is 
$$\Gamma(3/2)c_1^{1/2}z_x^{3/2}k^{-3/2}z_x^{-k}+O(k^{-5/2}z_x^{-k}).$$
Since $|c_1|>\epsilon$ and the implicit constant above is uniform in
$X$, $b_k(x)\neq 0$ for all $k\gg 0$ uniformly in $X$.
\end{proof}

\begin{thm}
For all open neighborhoods $U$ of the Mandelbrot set $\M$
there exists an integer $N$ such that for all $n>N$ and for all $c\not\in U$,
$M_n(-c/2)\neq 0$.
\end{thm}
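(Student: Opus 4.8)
The goal is to show that for any open neighborhood $U$ of the Mandelbrot set $\M$, the polynomials $M_n(-c/2)$ eventually have no zeros outside $U$. The natural strategy is to relate the zeros of $M_n(-c/2)$ to the convergence behavior of the generating function $g_c(z)=1+\sum_n M_n(c)z^n$, exploiting the machinery built up in Lemmas~\ref{single}--\ref{nonzero}. Observe that $M_n(-c/2)$ is, up to substitution, the $n$th coefficient of $g_{-c/2}(z)$, so the vanishing of $M_n(-c/2)$ is exactly the vanishing of a power-series coefficient. The escape iteration $I_n(-2c)$ that governs the radius $R_{n,c}$ is, under the substitution $c\mapsto -c/2$, precisely the Mandelbrot iteration $I_n(c)=I_{n-1}(c)^2+c$ evaluated at the parameter $c$. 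Thus the complement of $\M$ is exactly where $|I_n(c)|\to\infty$, which is where the relevant radii of convergence stay bounded away from zero.

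\textbf{Key steps.} First I would fix a closed set $K=\C\setminus U$; since $\M$ is compact, I may restrict attention to $K$ intersected with a large closed disk, as the region $|c|\gg 0$ can be handled separately (there the escape is immediate and uniform). On the resulting compact set $X$, every $c\in X$ lies outside $\M$, so the iterates $I_n(c)$ escape to infinity; in particular there is some index $n(c)$ realizing the supremum defining $R_{n,c}$, and standard properties of the escape dynamics give a connected neighborhood of $\infty$ (in the parameter variable) of the type required by Lemma~\ref{single}. The second step is to invoke Lemma~\ref{single} to conclude that, for the appropriate $n$, $g_{n,c}(z)^2$ has exactly one zero in the disk $|z|<R_{n-1,c}^{1/2}$, and that this persists uniformly. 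Combining Lemma~\ref{convergence} and Lemma~\ref{gcconverge}, the full series $g_c(z)$ then inherits the ``exactly one zero'' property in a disk of radius bounded below uniformly on $X$, since the capacity bound (radius of $\M$ strictly less than $2$, equivalently $R_c$ bounded below) forces $R_c>r$ for a single $r<1$ working on all of $X$. The third step is to apply Lemma~\ref{nonzero} with this compact $X$, the continuous coefficient functions $b_k(c)=M_k(-c/2)$, and the uniform radius $r$: the hypothesis ``$f_c(z)^2$ has exactly one zero in the disk of radius $r$'' is exactly what the preceding paragraph supplies, so the lemma yields an $N$ with $M_k(-c/2)=b_k(c)\neq 0$ for all $k>N$ and all $c\in X$, which is the assertion.

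\textbf{Main obstacle.} The delicate point is verifying the hypotheses of Lemma~\ref{nonzero} uniformly over $X$: one needs a single radius $r<1$ and continuous $r_c$ such that $g_c(z)^2$ genuinely has a unique zero in the disk of radius $r_c$ for every $c\in X$, not merely for $c$ near $\infty$. This is where the capacity bound enters essentially. Because $\M$ has logarithmic capacity $1$ and is contained in a disk of radius $<2$, one controls $\sup_k|I_k(-2c)|$ and hence $R_c$ from below on $X$; but the uniqueness of the zero requires the ``single dominant iterate'' condition of Lemma~\ref{single}, which may fail at parameters where two iterates $|I_j(c)|,|I_k(c)|$ are comparable. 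I expect the crux to be showing that any such coincidence locus can be absorbed into $U$ or handled by continuity/analytic-continuation of the zero $z_{n,c}$ across regions where the dominant index $n(c)$ jumps, so that the count of zeros inside the continuously varying disk still never changes. Stitching together the local pictures on $X\setminus U$ into one uniform statement, while tracking that no zero ever crosses the boundary circle, is the technical heart of the argument.
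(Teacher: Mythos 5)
Your overall architecture matches the paper's: use Lemma~\ref{single} to show each $g_{n,c}(z)^2$ has a unique simple zero in an explicit disk, pass to the limit $g_c$, and feed a compact family into Lemma~\ref{nonzero} to extract a uniform $N$ beyond which the coefficients $M_k(-c/2)$ cannot vanish. You also correctly flag uniformity of the hypotheses of Lemma~\ref{nonzero} as the delicate point, though your specific worry about the dominant index ``jumping'' dissolves: on a compact set disjoint from $\M$ the orbits $I_k$ escape at a rate bounded below, so there is a single $C$ such that for all $n>C$ the last iterate strictly dominates all earlier ones, and no stitching across coincidence loci is required.

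The genuine gap is your treatment of the unbounded part of $\C\setminus U$. You intersect with a large disk and defer $|c|\gg 0$ to a ``separate'' argument on the grounds that ``the escape is immediate and uniform,'' but escape is not the issue there: what is needed is a single $N$ with $M_n(-c/2)\neq 0$ for all $n>N$ and all large $|c|$ simultaneously, i.e.\ a uniform bound on the roots of all the $M_n$, and nothing in your sketch supplies this. Worse, the tool you want to use fails on an unbounded region: the radius of convergence of $g_c$ decays like $1/|c|$ (cf.\ Corollary~\ref{Rc-bound}), so the radii $r_x$ required by Lemma~\ref{nonzero} are not bounded below and the lemma cannot be applied as stated. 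The paper's fix is precisely the device you omit: work on a compact subset of the Riemann sphere containing $c=\infty$ and renormalize the variable, replacing $g_{n,c}(z)$ by $g_{n,c}(z/c)$, so that the family extends continuously to $c=\infty$ with limit a function of the form $\sqrt{1+\alpha z}$, which has a single simple zero and no vanishing Taylor coefficients; Lemma~\ref{nonzero} then applies in one stroke to the whole compactified family. (The paper also places the finite-stage approximants, indexed by the one-point compactification of $\Z^{\ge C}$, into the same compact parameter space, which packages the Hurwitz-type passage from the unique zero of $g_{n,c}^2$ to that of $g_c^2$ that you invoke only informally.) Since rescaling $z$ does not affect whether a given Taylor coefficient vanishes, this yields the conclusion for all $c\notin U$ at once.
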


\begin{proof} Making $U$ smaller if necessary, we may assume that it is bounded.
Let $U_1$ and $U_2$ be disjoint open sets in $\C\P^1$ such that $U_1$ contains
the complement of $U$ and $U_2$ contains $\M$.  
By construction the set $-{1\over 2} U_1$ satisfies the hypotheses of 
Lemma~\ref{single}
for all $n$ greater than some fixed $C$.  Let $K$ denote a compact
subset of $U_1$ containing the complement of $U$, and let $X$ denote the 
product of the one-point compactification $\Z^{\ge C}\cup\{\infty\}$ and 
$-{1\over 2}K$.  We define 
$$f_{n,c}(z)=\begin{cases} \sqrt{1-z}&\text{if } c=\infty,\\
                     g_c(z/c) &\text{if } n=\infty,\\
                     g_{n,c}(z/c) &\text{otherwise.}\\
                     \end{cases}
$$
By Lemma~\ref{convergence}, $f_x(z)$ is continuous in $x$ and is analytic in a
neighborhood of $0$ for each fixed $x$.  (Note that we have renormalized
the $g_{n,c}$ and $g_c$ to prevent the radius of convergence from going
to zero as $c\to \infty$.)  The conclusion of Lemma~\ref{single} implies that 
$f_x(z)$ satisfies the hypotheses of Lemma~\ref{nonzero}, and the theorem follows.
\end{proof}

By Proposition~\ref{integer}, if $f$ is 
sparse of index $m$, then $a_m$ is an algebraic 
integer.  On the other hand, $\X$ is rational over $\Q$, so all conjugates
of $a_m$ must also give rise to sparse solutions.  In particular, if $m\gg 0$,
$a_m$ and its conjugates all lie in any specified open set containing
$-{1\over 2}\M$.  Since $\M$ is a closed 
subset of the disk of radius $2$ meeting the boundary of the disk only at
the point $-2$, this open set can be taken to have capacity
less than $1$, and therefore to contain finitely many conjugacy classes of
algebraic integers \cite{Fekete}.  In fact, it is easy to see that it can be chosen
small enough that $0$ and $1$ are the only possible values for $a_m$.
The first is ruled out by definition, the second by Proposition~\ref{notone}.

However, to prove the main theorem, it is necessary to make the above
estimates effective.  We do this by choosing a particular open neighborhood
of $-{1\over 2}\M$, namely the disk of radius $7/8$ centered at $1/4$.
We begin by finding the orbits of algebraic integers belonging to this disk.

\begin{prop}
\label{outside-disk}
If $\alpha$ is an algebraic integer all of whose conjugates
satisfy $|z-1/4|\le 7/8$, then $\alpha$ is $0$ or $1$.
\end{prop}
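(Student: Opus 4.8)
The plan is to argue by contradiction, combining the integrality of norms and discriminants with the fact that the disk $D=\{z\mid |z-1/4|\le 7/8\}$ has logarithmic capacity $\mathrm{cap}(D)=7/8<1$; this is the effective form of Fekete's finiteness theorem invoked in the surrounding discussion. Suppose $\alpha$ is an algebraic integer of degree $d$ with minimal polynomial $P$ and conjugates $\alpha_1,\dots,\alpha_d\in D$, and suppose $\alpha\notin\{0,1\}$. Since $0$ and $1$ are rational, no conjugate $\alpha_i$ can equal $0$ or $1$, so $N(\alpha)=\prod_i\alpha_i$ and $N(\alpha-1)=\prod_i(\alpha_i-1)$ are nonzero rational integers and hence $|N(\alpha)|,|N(\alpha-1)|\ge 1$; likewise $\mathrm{disc}(P)=\prod_{i<j}(\alpha_i-\alpha_j)^2$ is a nonzero integer, so $\prod_{i<j}|\alpha_i-\alpha_j|\ge 1$.

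The idea is then to feed all three lower bounds into a single Vandermonde product by adjoining the admissible points. Consider the $(d+2)$-point set $S=\{0,1,\alpha_1,\dots,\alpha_d\}\subset D$. Grouping the pairwise distances according to which points they involve, the Vandermonde product factors as
\[
\prod_{\{x,y\}\subset S}|x-y| = |0-1|\cdot|N(\alpha)|\cdot|N(\alpha-1)|\cdot \prod_{i<j}|\alpha_i-\alpha_j|\ \ge\ 1 .
\]
For the upper bound I would use the transfinite diameter of a disk: among any $n$ points of a closed disk of radius $R$, the Vandermonde product is maximized at the (rotated) $n$-th roots of unity on the boundary, with maximal value $R^{\binom n2}n^{n/2}$. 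Applying this with $n=d+2$ and $R=7/8$ gives
\[
1\ \le\ (7/8)^{\binom{d+2}{2}}(d+2)^{(d+2)/2},
\]
which after taking logarithms and dividing by $(d+2)/2$ reduces to $\log(d+2)\ge (d+1)\log(8/7)$. As $\log(8/7)>0$, this inequality fails once $d$ is large; a direct check shows it forces $d\le 23$.

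With the degree bounded and $|\alpha_i|\le 1/4+7/8=9/8$, every coefficient of $P$ — being an elementary symmetric function of the $\alpha_i$ — is bounded by an explicit constant, so the finitely many candidate minimal polynomials are integer polynomials of degree at most $23$ with controlled coefficients. I would finish by checking directly that none of these, apart from $z$ and $z-1$, has all of its roots in $D$. The degree-one case already isolates $D\cap\Z=\{0,1\}$, consistent with the claim.

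The main obstacle is this final finite verification: although the capacity estimate provably bounds the degree, the resulting search over integer polynomials up to degree $23$ is large and is most naturally carried out by machine. A conceptually cleaner route would avoid the search entirely by exhibiting a single $g\in\Z[z]$ with $\|g\|_{D}<1$ whose only roots in $D$ are $0$ and $1$: then $\prod_i|g(\alpha_i)|=|N(g(\alpha))|$ is a nonnegative integer strictly less than $1$, so it must vanish, forcing $g(\alpha)=0$ and hence $\alpha\in\{0,1\}$ in one stroke. Producing such a $g$ is, however, exactly an integer Chebyshev (integer transfinite diameter) problem for $D$; because the disk is centered at the non-integer $1/4$ one cannot simply take a power of $(z-1/4)$, and locating a good integer polynomial is itself the delicate computational point.
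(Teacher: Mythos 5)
Your first step coincides with the paper's: bound the Vandermonde product of $n$ points of a closed disk of radius $r$ by $n^{n/2}r^{\binom{n}{2}}$ (the paper states this as $\prod_{i\neq j}|\alpha_i-\alpha_j|\le n^nr^{n^2-n}$, proved via the maximum principle and the concavity of $\log|1-e^{i\theta}|$) and play it off against the integrality of the discriminant. Your refinement of adjoining the points $0$ and $1$ and using $|N(\alpha)|,|N(\alpha-1)|\ge 1$ is a nice touch that tightens the degree bound slightly ($d\le 23$ versus the paper's $n\le 25$); up to that point the argument is sound.

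The gap is in eliminating degrees $2\le d\le 23$. You propose enumerating all monic integer polynomials of degree $\le 23$ whose coefficients obey $|e_k|\le\binom{d}{k}(9/8)^k$ and checking their roots; but these bounds permit middle coefficients of size roughly $5\times 10^6$, so the search space has on the order of $10^{80}$ candidates. This ``finite verification'' cannot be carried out by machine or otherwise, and since you yourself flag it as the main obstacle, the proof is not complete. The paper closes exactly this gap with an arithmetic input absent from your proposal: lower bounds for discriminants of number fields. For $6\le n\le 25$ it checks that $n^n(7/8)^{n^2-n}$ already falls below the Minkowski bound $n^{2n}\pi^n/((n!)^2 4^n)$; for $3\le n\le 5$ it invokes the tabulated (Odlyzko-type) minimal discriminants from the Poitou Bourbaki report; and for $n=2$, where even the minimal discriminant $3$ lies below $4\cdot(7/8)^2$, it classifies the quadratic integers whose conjugate pairs fit in a disk of radius $7/8$ (they are $m+e^{\pm 2\pi i/3}$, $m\in\Z$) and observes that none fit in the disk centered at $1/4$. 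Some input of this kind --- a discriminant lower bound or an explicit auxiliary integer polynomial --- is needed to replace the infeasible search; the integer Chebyshev route you sketch at the end is a legitimate alternative in principle, but as you acknowledge, you have not produced the required polynomial, so it does not yet constitute a proof.
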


\begin{proof} According to the maximum principle, for elements $\alpha_1,\ldots,\alpha_n$ 
of a closed disk of radius $r$,
the product $\prod_{i\neq j}|\alpha_i-\alpha_j|$ can 
achieve its maximum only if all $\alpha_i$ lie on the boundary of the disk.
By the concavity of $\log |1-e^{i\theta}|$, the product is achieved when the 
$\alpha_i$ form the vertices of an inscribed regular $n$-gon.  In this case,
the product is 
$$\LP \prod_{i=1}^{n-1} \bigm|r-r\zeta_n^i\bigm|\RP^n=r^{n^2-n}\Bigm|1+x+
x^2+\cdots+x^{n-1}\vert_{x=1}\Bigm|^n=n^nr^{n^2-n}.$$
For $r=7/8$, this expression is $<1$ for
$n\ge 26$.  For $6\le n\le 25$, we still have that $g(n)$ is less than the 
Minkowski bound ${n^{2n} \pi^n\over (n!)^2 4^n}$.  For $3\le n\le 5$,
$g(n)$ remains less than the smallest actual discriminant absolute value,
as tabulated in \cite{Bourbaki}.  Finally, for $n=2$, two conjugate algebraic
integers lie in the same disk of radius $7/8$ if and only if the 
integers are of the form $n+e^{\pm 2\pi i\over 3}$ for some $n\in \Z$.
In particular, no such pair lie in a disk centered at $1/4$.
\end{proof}

\begin{prop}
If $n\ge 2$ and $|d+1/2|>7/4$, then 
\begin{equation}
\label{final-ineq}
(|d+1/2|-1/4)^{2^{n-1}}<|I_n(d)|<(|d+1/2|+1/4)^{2^{n-1}}.
\end{equation}
\end{prop}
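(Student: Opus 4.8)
The plan is to induct on $n$, beginning with the base case $n=2$. Writing $w=d+1/2$, a direct computation gives $I_2(d)=d^2+d=(w-1/2)(w+1/2)=w^2-1/4$, so $|I_2(d)|=|w^2-1/4|$ lies between $|w|^2-1/4$ and $|w|^2+1/4$; since $|w|>7/4$, the triangle inequality yields both desired bounds $(|w|-1/4)^2<|I_2|<(|w|+1/4)^2$ at once. For the inductive step I would use the defining relation $I_n=I_{n-1}^2+d$ in the form $\bigl||I_n|-|I_{n-1}|^2\bigr|\le|d|$. The difficulty is that this relation loses an additive $|d|$ at every stage, whereas squaring the naive bounds $(|w|\pm1/4)^{2^{n-2}}$ reproduces exactly $(|w|\pm1/4)^{2^{n-1}}$ with no room to absorb that loss. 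The induction must therefore be run with explicit slack that squaring amplifies faster than $|d|$ accumulates.

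Set $A=|w|-1/4>3/2$ and $B=|w|+1/4>2$. For the \emph{upper} bound I would strengthen the hypothesis to $|I_n|\le B^{2^{n-1}}-1/2$. The step then reads $|I_n|\le(B^{2^{n-2}}-1/2)^2+|d|=B^{2^{n-1}}-B^{2^{n-2}}+1/4+|d|$, and since $B^{2^{n-2}}\ge B^2$ for $n\ge3$, this is at most $B^{2^{n-1}}-1/2$ as soon as $|d|\le B^2-3/4$. That condition reduces to $|w|^2\ge|w|/2+19/16$, a clean consequence of $|w|>7/4$; the base case $n=2$ needs only $|w|\ge11/8$. Thus the upper bound closes with a constant slack and presents no real obstacle.

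The \emph{lower} bound is the crux, and it is where $|w|>7/4$ is used sharply. Here I would track the excess $\tau_n:=|I_n|-A^{2^{n-1}}$ and prove $\tau_n\ge\tau_2>0$ for all $n\ge2$, noting $\tau_2=|w^2-1/4|-A^2\ge|w|/2-5/16>0$. From $|I_n|\ge|I_{n-1}|^2-|d|$ one obtains $\tau_n\ge 2A^{2^{n-2}}\tau_{n-1}+\tau_{n-1}^2-|d|$, and—crucially \emph{retaining} the quadratic term—the hypothesis $\tau_{n-1}\ge\tau_2$ gives $\tau_n\ge 2A^2\tau_2+\tau_2^2-|d|$ for $n\ge3$, which is $\ge\tau_2$ precisely when $\tau_2(2A^2-1+\tau_2)\ge|d|$. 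This last inequality is the main obstacle. I expect to verify it by fixing $\rho=|w|$ and checking the worst configuration on the circle $|w|=\rho$, which is $w=-\rho$ (simultaneously minimizing $\tau_2$ and maximizing $|d|=\rho+1/2$); there it becomes the one-variable polynomial inequality $(\rho/2-5/16)(2\rho^2-\rho/2-19/16)\ge\rho+1/2$ for $\rho\ge7/4$.

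What makes this delicate, and worth flagging, is that the final inequality is an equality in the limit at $\rho=7/4$, with the left side growing strictly faster thereafter. This simultaneously confirms that the constant $7/4$ is essentially forced by the method and isolates the lower bound as the only tight part of the argument; in particular, dropping the quadratic term $\tau_2^2$ would break the estimate exactly at $\rho=7/4$, so its retention is essential rather than cosmetic.
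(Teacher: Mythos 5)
Your argument is correct and is essentially the paper's proof: both run an induction on $n$ from the relation $\bigl|\,|I_n(d)|-|I_{n-1}(d)|^2\bigr|\le|d|\le|d+1/2|+1/2$, strengthening the inductive hypothesis with explicit slack whose cross term under squaring absorbs the additive loss $|d|$ --- the paper's slack is $\frac34(|d+\frac12|\mp\frac14)^{1-2^{n-1}}$, chosen so that this cross term is exactly the constant $\frac32(|d+\frac12|\mp\frac14)$, whereas yours is constant in $n$. One small correction to your closing remark: at $\rho=7/4$ your final inequality reads $\frac9{16}\cdot\frac{65}{16}=\frac{585}{256}$ versus $\rho+\frac12=\frac{576}{256}$, a strict inequality with a small margin rather than an equality in the limit, though your broader point that $7/4$ is essentially the threshold for this method is borne out by the paper's normalization, where the critical step $\frac32(r-\frac14)\ge r+\frac12$ is tight precisely at $r=7/4$.
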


\begin{proof} As $I_2(d)=d^2+d=(d+1/2)^2-1/4$, setting $r=|d+1/2|>7/4$, we have
\begin{align*}
(r-\frac 14)^{2}+\frac 34(r-\frac 14)^{-1}&< r^2-\frac 14\le |I_2(d)|\\ 
&\le r^2+\frac 14 < (r+\frac 14)^{2}-\frac 34(r+\frac 14)^{-1}
\end{align*}
We prove by induction that for all $n\ge 2$, we have
\begin{equation}
\label{induction-hyp}
\begin{split}
(r-\frac 14)^{2^{n-1}}+\frac 34(r-\frac 14)^{1-2^{n-1}}&< r^2-\frac 14\le |I_n(d)| \\
&\le r^2+\frac 14 
< (r+\frac 14)^{2^{n-1}}-\frac 34(r+\frac 14)^{1-2^{n-1}}
\end{split}
\end{equation}
For the induction step, we apply 
$$|w|^2-r-\frac 12\le |w^2+d|\le |w|^2+r+\frac 12,$$
to $w = I_n(d)$ in \eqref{induction-hyp}, using
the inequalities
$$\frac 32(r\pm\frac 14) > r+\frac 14,\ \frac 34(r\pm \frac 14) > 1.$$
The inequalities \eqref{final-ineq} follow immediately.
\end{proof}

\begin{cor}
\label{Rc-bound}
If $n\ge 2$, $|c-1/4|>7/8$, then
$$\frac 1{2 |c-1/4|+1/4}<R_{n,c}<\frac 1{2 |c-1/4|-1/4} < \frac 23.$$
In particular, 
$${5\over 16|c|}\le R_c\le {3\over 4|c|}.$$
\end{cor}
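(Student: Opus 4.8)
The plan is to combine the two-sided estimate \eqref{final-ineq} just proved with the closed form $R_{n,c}=\bigl(\sup_{1\le k\le n}|I_k(-2c)|\bigr)^{-2^{1-n}}$ recorded above. I write $d=-2c$ and $s=|c-1/4|$, so that $|d+1/2|=|{-2c}+1/2|=2s=:\rho$, and the hypothesis $|c-1/4|>7/8$ is precisely $\rho>7/4$. The preceding proposition then applies and gives $(\rho-1/4)^{2^{k-1}}<|I_k(-2c)|<(\rho+1/4)^{2^{k-1}}$ for every $k$ with $2\le k\le n$.

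The one substantive step is to show that $\sup_{1\le k\le n}|I_k(-2c)|$ is attained at $k=n$. For $2\le k\le n-1$ the upper bound yields $|I_k(-2c)|<(\rho+1/4)^{2^{n-2}}$, while the lower bound yields $|I_n(-2c)|>(\rho-1/4)^{2^{n-1}}=\bigl((\rho-1/4)^2\bigr)^{2^{n-2}}$; since $\rho>7/4$ makes $(\rho-1/4)^2>\rho+1/4$, taking $2^{n-2}$-th powers shows $|I_k(-2c)|<|I_n(-2c)|$. The index $k=1$ is not covered by \eqref{final-ineq} and must be treated directly: here $I_1(-2c)=-2c$, so $|I_1(-2c)|=2|c|\le 2(s+1/4)=\rho+1/2$, and $\rho>7/4$ gives $(\rho-1/4)^2>\rho+1/2$, so again $|I_1(-2c)|<|I_n(-2c)|$. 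Hence the supremum equals $|I_n(-2c)|$ and $R_{n,c}=|I_n(-2c)|^{-2^{1-n}}$. Raising the bounds on $|I_n(-2c)|$ to the negative power $-2^{1-n}$ reverses the inequalities, and since $2^{n-1}\cdot 2^{1-n}=1$ the exponents collapse, giving $(\rho+1/4)^{-1}<R_{n,c}<(\rho-1/4)^{-1}$, which is the first display after substituting $\rho=2s$; the final bound $R_{n,c}<2/3$ is immediate from $\rho-1/4>3/2$.

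For the ``in particular'' claim I note that these bounds are uniform in $n$, hence survive passage to the limit: $(\rho+1/4)^{-1}\le \lim_{n\to\infty}R_{n,c}\le (\rho-1/4)^{-1}$. The limit exists because $I_{n+1}=I_n^2+d$ forces $|I_{n+1}(-2c)|/|I_n(-2c)|^2\to 1$, so the normalized quantities $|I_n(-2c)|^{2^{-n}}$ converge (alternatively one argues with $\liminf$ and $\limsup$, which suffices). Since $(\rho-1/4)^{-1}<2/3<1$, the minimum defining $R_c$ is the limit itself, so $(2s+1/4)^{-1}\le R_c\le (2s-1/4)^{-1}$.

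Finally I would pass from $s=|c-1/4|$ to $|c|$ using the triangle inequalities $s-1/4\le|c|\le s+1/4$. The upper estimate $R_c\le(2s-1/4)^{-1}\le 3/(4|c|)$ reduces, via $|c|\le s+1/4$, to $4s+1\le 3(2s-1/4)$, and the lower estimate $R_c\ge(2s+1/4)^{-1}\ge 5/(16|c|)$ reduces, via $|c|\ge s-1/4$, to $16(s-1/4)\ge 5(2s+1/4)$; each is equivalent to $s\ge 7/8$ and therefore holds under the hypothesis. The only place requiring genuine care is the identification of the supremum with the top term $k=n$, especially the $k=1$ contribution omitted from \eqref{final-ineq}; once that is settled the corollary is routine bookkeeping, so I anticipate no real obstacle.
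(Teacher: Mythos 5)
Your proof is correct and follows essentially the same route as the paper: use the two-sided bounds of the preceding proposition to identify $\sup_{1\le k\le n}|I_k(-2c)|$ with $|I_n(-2c)|$, so that $R_{n,c}=|I_n(-2c)|^{-2^{1-n}}$, then raise the bounds to the power $-2^{1-n}$ and convert from $|c-1/4|$ to $|c|$ by the triangle inequality. The only difference is that you spell out the $k=1$ term and the convergence of $R_{n,c}$, details the paper's one-line proof leaves implicit.
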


\begin{proof} The proposition implies that $|I_n(-2c)|$ is monotonically increasing
for $n\ge 2$.
Thus $R_{n,c}=|I_n(-2c)|^{2^{1-n}}$.  The first claim follows immediately,
the second from the inequality
$${5\over 16} < {|c|\over |2|c-1/4|\pm 1/4|} < {3\over 4},$$
which holds for $|c-1/4| > 7/8$.
\end{proof}

\begin{cor}
\label{zc}
The sequence $z_{1,c},z_{2,c},\ldots$ converges to $z_c$, and $|z_c| = R_c$.
\end{cor}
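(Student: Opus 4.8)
The plan is to deduce everything from the relation $z_{n,c}^{-2^{n-1}}=I_n(-2c)$ of Lemma~\ref{single} together with the uniform convergence $g_{n,c}\to g_c$ furnished by Lemma~\ref{convergence}, applied not to $g_{n,c}$ itself but to the honest analytic functions $h_{n,c}(z):=g_{n,c}(z)^2=g_{n-1,c}(z^2)+2cz$. First I would record the modulus statement: taking absolute values in $z_{n,c}^{-2^{n-1}}=I_n(-2c)$ and using the monotonicity of $|I_n(-2c)|$ established in the proof of Corollary~\ref{Rc-bound} gives $|z_{n,c}|=R_{n,c}$. Since $R_{n,c}<\tfrac23$ there, the quantity $R_c=\min(\lim_n R_{n,c},1)$ equals $\lim_n R_{n,c}$ and satisfies $R_c<1$. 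Thus once convergence $z_{n,c}\to z_c$ is established, $|z_c|=\lim_n R_{n,c}=R_c$ is automatic, and the real content is the convergence.

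The key observation, which removes the apparent obstacle, is that $R_c<1$ forces $R_c<R_c^{1/2}$, so the zeros $z_{n,c}$, which cluster on the circle $|z|=R_c$, lie strictly \emph{inside} the disk $|z|<R_c^{1/2}$ on which the limiting object is analytic. Concretely, $h_c(z):=g_c(z^2)+2cz=g_c(z)^2$ is analytic on $|z|<R_c^{1/2}$ by Lemma~\ref{gcconverge}, and for any $\rho<R_c^{1/2}$ (so that $\rho^2<R_c$) Lemma~\ref{convergence} gives $g_{k,c}(z^2)\to g_c(z^2)$ uniformly on $|z|\le\rho$, whence $h_{k,c}\to h_c$ uniformly there, together with uniform convergence of the derivatives by Cauchy's estimates. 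This is exactly what fails if one tries to work with $g_{n,c}$ directly on the disk of radius $R_c$, whose boundary the zeros approach.

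I would then fix radii $R_c<\rho_1<\rho_2<R_c^{1/2}$, choosing $\rho_1$ so that the (isolated) zeros of the nonconstant function $h_c$, normalized by $h_c(0)=1$, avoid the circle $|z|=\rho_1$. For $k$ large, $|z_{k,c}|=R_{k,c}<\rho_1$ while $\rho_2<R_{k-1,c}^{1/2}$, so by Lemma~\ref{single} the point $z_{k,c}$ is the unique zero of $h_{k,c}$ in $|z|<\rho_1$ and $h_{k,c}$ is nonvanishing on $|z|=\rho_1$. Uniform convergence of $h_{k,c}$ and $h_{k,c}'$ on this circle then yields, via the residue formula for the unique enclosed zero,

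\begin{equation*}
z_{k,c}=\frac1{2\pi i}\oint_{|z|=\rho_1} z\,\frac{h_{k,c}'(z)}{h_{k,c}(z)}\,dz\ \longrightarrow\ \frac1{2\pi i}\oint_{|z|=\rho_1} z\,\frac{h_c'(z)}{h_c(z)}\,dz=:z_c .
\end{equation*}

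The plain argument principle applied in the same way shows $\frac1{2\pi i}\oint_{|z|=\rho_1}h_c'/h_c=\lim_k 1=1$, so $h_c$ has exactly one (simple) zero in $|z|<\rho_1$, namely $z_c$; hence $z_c$ is well defined and $z_{k,c}\to z_c$, with $|z_c|=\lim_k R_{k,c}=R_c$.

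I expect the only genuine subtlety to be the boundary issue just described: a naive Hurwitz or argument-principle argument on the natural disk $|z|<R_c$ is invalid because the zeros accumulate on its boundary, where no uniform convergence is available. Recognizing that passing to $h_{n,c}=g_{n,c}^2$ pushes the relevant domain out to radius $R_c^{1/2}>R_c$—so that the zeros sit in the interior and honest uniform convergence holds—is the crux; the remaining verifications (that $\rho_1$ can be chosen off the zero set of $h_c$, and that $h_{k,c}$ has no competing zeros in $|z|\le\rho_1$) are routine.
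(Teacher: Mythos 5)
Your proposal is correct and follows essentially the same route as the paper: both establish $|z_{n,c}|=R_{n,c}$ from Lemma~\ref{single} and Corollary~\ref{Rc-bound}, then exploit uniform convergence of $g_{n,c}(z)^2$ on a disk of radius strictly between $\sup_n R_{n,c}$ and $\inf_n R_{n-1,c}^{1/2}$ containing all the zeros in its interior, concluding via the uniqueness and simplicity of the zero. You merely make explicit the Hurwitz/argument-principle step that the paper leaves implicit.
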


\begin{proof}
By Lemma~\ref{single} and Corollary~\ref{Rc-bound}, $|z_{n,c}| = R_{n,c}$ for all $n\ge 1$.
As $g_{n,c}^2(z)$ converges for $|z| < R_{n-1,c}^{1/2}$, choosing
$$r\in ((2|c-1/4|-1/4)^{-1},(2|c-1/4|+1/4)^{-1/2}),$$
the sequence $g_{n,c}^2(z)$ converges to $g_c(z)^2$ uniformly on the disc $|z|\le r$, and $z_{n,c}$ is the unique zero of $g_{n,c}^2(z)$
in that disk and is moreover simple.  It follows that $z_c = \lim_{n\to \infty} z_c$.

\end{proof}

\begin{thm}
The only sparse elements in $\X$ are the linear solutions
(xi).
\end{thm}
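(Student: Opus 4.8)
The plan is to argue by contradiction. Suppose $f\in\X$ is sparse but not of type (xi); then $f$ is nonlinear, so by Corollary~\ref{MersenneIndex} its index $m$ is a Mersenne prime, and by Corollary~\ref{TwoThousand} we may take $m=2^k-1$ with $k$ as large as we please. Three facts will be played against one another: (a) $a_m$ is an algebraic integer (Proposition~\ref{integer}); (b) $a_m$ is a common root of the Mandelbrot polynomials $M_i$ for a great many indices $i$ (Proposition~\ref{multizero}); and (c) this vanishing passes to every Galois conjugate of $a_m$, since each $M_i$ has rational coefficients (equivalently, $\X$ is $\Aut(\C)$-stable). The idea is to use (b) and (c) to trap $a_m$ together with all of its conjugates in a small disk, and then to invoke an arithmetic constraint on algebraic integers to force $a_m\in\{0,1\}$, which is impossible since $a_m\neq 0$ by the definition of the index and $a_m\neq 1$ by Proposition~\ref{notone}.

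First I would locate $a_m$ and its conjugates. Let $D$ denote the closed disk of radius $7/8$ about $1/4$, which contains $-\tfrac12\M$. Proposition~\ref{multizero} gives $M_i(a_m)=0$ whenever $1\le i\le\tfrac{m-1}2$ and $i(m-1)+1\notin\P$; as $i\mapsto i(m-1)+1$ runs through an arithmetic progression of common difference $2^k-2$, Lemma~\ref{AP} guarantees such $i$ in every window of length $2^{k-2}$, so in particular there are such $i$ exceeding any prescribed bound once $m$ is large. Since each $M_i$ has rational coefficients, every conjugate $\beta$ of $a_m$ is likewise a root of all of these $M_i$. Feeding one large such index into the separation theorem proved above — in the effective form that Corollaries~\ref{Rc-bound} and \ref{zc} provide for the region $|c-1/4|>7/8$ — I conclude that any root of $M_i$ with $i$ large lies in $D$. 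Hence $a_m$ and all of its conjugates lie in $D$.

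Now Proposition~\ref{outside-disk} applies directly: an algebraic integer all of whose conjugates lie in $D$ is either $0$ or $1$. As noted, neither value is possible for $a_m$, so no nonlinear sparse solution exists, and the sparse elements of $\X$ are exactly the linear solutions (xi).

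The crux is the effective separation in the second paragraph: one needs the zeros of $M_i$, for all large $i$ simultaneously, to be confined to the \emph{specific} closed disk $D$ rather than to some unspecified neighborhood of $-\tfrac12\M$. This is precisely the role of the disk of radius $7/8$ about $1/4$ and of Corollaries~\ref{Rc-bound} and \ref{zc}: they pin down the radius of convergence $R_{n,c}$ and the unique simple zero $z_{n,c}$ of $g_{n,c}(z)^2$ for every $c$ with $|c-1/4|>7/8$ — including, after the renormalization $g_c(z/c)$, the limiting behavior as $c\to\infty$ — so that the compactness argument of Lemma~\ref{nonzero} furnishes a single threshold $N$ past which $M_i(c)$ cannot vanish outside $D$. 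By comparison, the number-theoretic input that enough indices $i\le\tfrac{m-1}2$ have $i(m-1)+1\notin\P$ to reach beyond $N$ is routine, given Lemma~\ref{AP} and the vast size of $m$.
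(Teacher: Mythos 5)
Your architecture is sound and is essentially the paper's argument turned inside out: the paper replaces $f$ by a Galois conjugate so that $c=a_p$ lies outside the disk $D$ and then derives $M_k(c)\neq 0$ for all $k\ge 2773$, contradicting Proposition~\ref{multizero}; you instead trap all conjugates inside $D$ and invoke Proposition~\ref{outside-disk} at the end. Up to that reordering the two proofs are the same, and your uses of Propositions~\ref{integer}, \ref{multizero}, \ref{notone} and Lemma~\ref{AP} are all correct.

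The gap is the sentence in which you conclude that ``any root of $M_i$ with $i$ large lies in $D$'' from ``the effective form that Corollaries~\ref{Rc-bound} and \ref{zc} provide.'' Those corollaries only locate the unique simple zero $z_c$ of $h_c(z)=g_c(z)^2$ and bound the radius of convergence $R_c$; they do not by themselves produce a threshold $N$ beyond which the Taylor coefficients $M_k(c)$ of $g_c(z)$ are nonzero, let alone an explicit one. The qualitative theorem in the paper does give such an $N$, but only via the compactness argument of Lemma~\ref{nonzero}, hence ineffectively --- and an ineffective $N$ does not suffice here. The index $m$ is a fixed (if enormous) Mersenne prime, and to contradict Proposition~\ref{multizero} you must exhibit a \emph{specific} $i\le\frac{m-1}{2}$ with $i\ge N$ and $i(m-1)+1\notin\P$; if $N$ is unknown, you are left with an undetermined finite set of Mersenne indices that cannot be checked. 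Converting ``unique simple zero'' into an explicit $N$ is the technical heart of the paper's proof: it requires the lower bound $|h_c'(z_c)|\ge |c|/9$, the bound $|h_c''(z)|\le 1800/R_c^2$ near $z_c$, and the contour-integral comparison showing that the branch-cut segment dominates the circle once $k\ge 2773$. None of that appears in your proposal, so the key step is asserted rather than proved. (With the explicit $2773$ in hand your route does go through; indeed, since Corollary~\ref{TwoThousand} makes $\frac{m-1}{2}$ vastly larger than $2773$, you would not even need the paper's separate treatment of $p=31$ and $p=127$.)
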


\begin{proof} 
Suppose $f\in \X$ is sparse of index $p$.  Setting $c = a_p$, we have $c\neq 0$ by definition and $c\neq 1$ by Proposition~\ref{notone}.
Therefore, by Proposition~\ref{integer} and Proposition~\ref{outside-disk}, we have $|c - 1/2| > 7/4$..

We want to estimate the constant $N$ of Lemma~\ref{nonzero}.  
Our first task is to estimate the derivative of $h_c(z)=g_c(z)^2$
at its unique zero $z_c$ in the disk
$|z|<\lim_{n\to \infty} R_{n-1,c}^{1/2}$.  This is the same as the limit
of the derivative of $g_{n,c}(z)^2$ at its unique zero $z_{n,c}$ satisfying
$|z_{n,c}|\le R_{n-1,c}^{1/2}$.  
By induction on $k$, we have
$$g_{n-k,c}(z_{n,c}^{2^k}) = I_k(-2c)z_{n,c}^{2^{k-1}}$$
for $0\le k\le n$.  Differentiating \eqref{gnc}, we obtain
$$g'_{n-k,c}(z) =  \frac{c+z g_{n-k-1,c}'(z^2)}{g_{n-k,c}(z)}$$
for all $i\ge 1$, and substituting $z=z_{n,c}^{2^k}$, we get
$$g'_{n-k,c}(z_{n,c}^{2^k}) = \frac c{I_k(-2c)z_{n,c}^{2^{k-1}}} + \frac{z_{n,c}^{2^{k-1}}g'_{n-k-1,c}(z_{n,c}^{2^{k+1}})}{I_k(-2c)}.$$
Therefore, the value of the derivative of $g_{n,c}(z)^2 = 2cz+ g_{n-1,c}(z^2)$ at $z_{n,c}$ is
\begin{align*}
2c+2z_{n,c}g'_{n-1,c}(z_{n,c}^2)
&=2c+\frac{2c}{I_1(-2c)}
+\frac{2z_{n,c}^2g'_{n-2,c}(z_{n,c}^{4})}{I_1(-2c)} \\
&=2c+\frac{2c}{I_1(-2c)}+\frac{2c}{I_1(-2c)I_2(-2c)}+\frac{2z_{n,c}^4g'_{n-3,c}(z_{n,c}^{8})}{I_2(-2c)} \\
&=\cdots.
\end{align*}
Expanding completely (and using the fact that $g'_{0,c}$ is identically zero),
we obtain
$$2c\LP 1+{1\over I_1(-2c)}+{1\over I_1(-2c)I_2(-2c)}+\cdots+{1\over I_1(-2c)
  I_2(-2c)\cdots I_{n-1}(-2c)}\RP.$$
As $I_1(-2c)=-2c$ lies on a circle of radius $7/4$ centered at $-1/2$, its
inverse lies on the circle with diameter the real interval $[-4/9,4/5]$.
It follows that $|1+I_1(-2c)^{-1}|\ge 5/9$.  On the other hand,
$|I_1(-2c)|\ge 5/4$, and by  \eqref{final-ineq},
$|I_2(-2c)|\ge 9/4$, and $|I_n(-2c)|\ge 5$ for $n\ge 3$, 
so 
$$\Bigm|1+{1\over I_1(-2c)}+\cdots+{1\over I_1(-2c)
  \cdots I_{n-1}(-2c)}\Bigm|
  \ge 5/9-{1+5^{-1}+5^{-2}+\cdots\over |I_1(-2c) I_2(-2c)|}\ge {1\over 9}.$$
Thus, 
\begin{equation}
\label{hcprime}
|h'_c(z_c)|\ge {|c|\over 9}>{1\over 30R_c}.
\end{equation}

Next, we need to estimate the second derivative of $h_c(z)$ near $z=z_c$.
By Cauchy's integral formula for derivatives,
$$|f''(z)|\le 2\frac{\sup_{\theta}|f(z+re^{i\theta})|} {r^2}.$$
By Lemma~\ref{gcconverge}, $h_c(z)$ converges for $|z|<\sqrt{R_c}$ and therefore, by Corollary~\ref{Rc-bound},
for $|z|<1.2R_c$.  For $|z|<1.1R_c$, we may take $r=R_c/10$ and still have
$|c(z+re^{i\theta})|<1$ by Corollary~\ref{Rc-bound}.  As $|c|>1$, the inequality $|\sqrt{1+z}|\le 1+|z|/2$ implies
\begin{equation}
\label{g-bound}
|g_{n,c}(z+re^{i\theta})|\le 1+|c(z+re^{i\theta})|+\frac{|c(z+re^{i\theta})^2|}2+\frac{|c(z+re^{i\theta})^4|}4+\cdots\le 3.
\end{equation}
Thus, $|z| < 1.1 R_c$ implies
$$|h_c''(z)|\le {1800\over R_c^2}.$$
By \eqref{hcprime}, $|z-z_c|\le R_c/120$ implies
$$\biggm|{h'_c(z)\over h'_c(z_c)}-1\biggm|\le {1\over 2},$$
so integrating $h'_c(z)$ along the directed line segment from $z_c$ to $z$, we obtain
$$\biggm|\frac{h_c(z)}{h'_c(z_c)(z-z_c)}-1\biggm|
= \biggm|\frac{\int_{z_c}^z h'_c(w)dw}{h'_c(z_c)(z-z_c)}-1\biggm|
\le {1\over 2}.$$
As $|(x+iy)^2-1| \le 1/2$ implies
$$(x^2+y^2)^2 +2y^2 + 1 - 2x^2 = (x^2-y^2-1)^2 + (2xy)^2 \le \frac 14,$$
it follows that 
$$\Re \sqrt\frac{h_c(z)}{h'_c(z_c) (z-z_c)} > \frac 12$$
in the ball $|z-z_c|\le R_c/120$.

We integrate $\sqrt{h_c(z)}z^{-k-1}$ over the contour consisting of a
straight line from $z_c$ to ${121\over 120}z_c$, a counterclockwise circle
$C$ of radius $\frac{121R_c}{120}$, and a straight line returning to $z_c$.
As $\sqrt{h_c(z)}$ changes sign over the contour, the integral is twice
the original segment plus the circle.  We will show that the integral is 
non-zero by showing that 
\begin{equation}
\label{Re}
\Re\LP h'_c(z_c)^{-1/2}z_c^{k+1/2}\int_{z_c}^{121z_c\over 120}{\sqrt{h_c(z)}\over z^{k+1}}dz\RP>
  \biggm| h'_c(z_c)^{-1/2}z_c^{k+1/2}\int_C {\sqrt{h_c(z)}\over z^{k+1}}dz\biggm|.
\end{equation}
The left hand side of \eqref{Re} is the integral of 
\begin{equation}
\label{integrand}\Bigl(\frac{z_c}{z}\Bigr)^k \sqrt{\frac{z_c(z-z_c)}{z^2}}\Re \sqrt{\frac{h_c(z)}{h'_c(z_c)(z-z_c)}}
> \frac 12 \Bigl(\frac{z_c}{z}\Bigr)^k \sqrt{\frac{z_c(z-z_c)}{z^2}}.
\end{equation}
It is therefore 
greater than the integral of the right hand side of \eqref{integrand} from $z_c(1+1/480)$ to $z_c(1+1/240)$,
and so is at least $\frac{\sqrt{480}(1+1/240)^{-k}R_c}{2\cdot 480 \cdot 481}$.  The right hand side of \eqref{Re}
is no larger than 
$$2\pi \frac{121R_c}{120}\frac 1{\sqrt{|h'_c(z_c)R_c|}}\frac{\sup_{z\in C}\sqrt{|h_c(z)|}}{(1+1/120)^{k+1}}
\le 2\pi \frac{121R_c}{120}\frac 1{\sqrt30}\frac 3{(1+1/120)^{k+1}},$$
by \eqref{hcprime} and \eqref{g-bound}.
For $k \ge 2773$, we have
$$\Big(\frac{242}{241}\Bigr)^k > 48\cdot 481\pi,$$
which implies \eqref{Re}.

If $p\ge 2^{13}-1$, then by Lemma~\ref{AP}, there exists $k$ satisfying $p-1 \ge k \ge (p-1)/2 \ge 2773$ such that $k(p-1)+1\not\in \P$ and
the $z^k$ coefficient of $g_c(z)$, i.e., $M_k(c) = M_k(a_p)$ is non-zero.  This contradicts Proposition~\ref{multizero}, and we are done.

This leaves two cases: $p=31$ and $p=127$.  For the former, $3\cdot 30+1\not\in \P$ and for the latter, $2\cdot 126+1\not\in\P$.
Now, either $M_2(c) = 0$ or $M_3(c) = 0$ implies $c\in \{0,1\}$, which is impossible.  This again contradicts Proposition~\ref{multizero}, which proves the theorem.

\end{proof}

\section{Some Variants}

In this section, we consider some variants of the problem of classifying 
normalized multiplicative power series whose squares are multiplicative.

We begin by proving Theorem~\ref{four}, or more precisely:

\begin{prop}
The set of normalized multiplicative power series $f(q)$ such that $f(q)^2$ and $f(q)^4$ are
both multiplicative is as follows:
\begin{equation}
\label{solns}
\{\vartheta_{\Z}(q),\vartheta_{\Z[i]}(q),\vartheta_{\Z[\zeta_3]}(q), -\vartheta_{\Z}(-q),-\vartheta_{\Z[i]}(-q),-\vartheta_{\Z[\zeta_3]}(-q)\}.
\end{equation}
\end{prop}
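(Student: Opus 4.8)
The plan is to exploit the extra hypothesis that $f^4$ is multiplicative---the feature that separates this problem from the (still open) determination of all of $\X$---in order to reduce to a finite computation. Since $f$ and $f^2$ are multiplicative, $f\in\X$; moreover its normalized square $a_0 f^2$ again lies in $\X$, since $(a_0 f^2)^2 = a_0^2 f^4$ is multiplicative. The first step is to discard the sparse solutions. Because the only sparse elements of $\X$ are the linear solutions (xi), it is enough to observe that no $f=\frac1{2a_0}+q$ has multiplicative fourth power: for such $f$, $f^4$ is a polynomial of degree $4$, so the normalized $q^6$ coefficient $d_6$ vanishes while $d_2$ and $d_3$ do not, violating $d_6=d_2d_3$. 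Hence $f$ is non-sparse, and by Proposition~\ref{ExceptSparse} it is determined by its first $17$ coefficients.

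The core of the argument is then a finite elimination. Writing $b_n=a_n+a_0\sum_{i=1}^{n-1}a_i a_{n-i}$ for the normalized coefficients of $f^2$ and $d_n=b_n+2a_0\sum_{i=1}^{n-1}b_i b_{n-i}$ for those of $f^4$ (with $a_1=b_1=1$), the three multiplicativity hypotheses become the polynomial relations $a_{mn}=a_ma_n$, $b_{mn}=b_mb_n$, and $d_{mn}=d_md_n$ for coprime $m,n$. These are polynomials in the prime-power unknowns $a_0,a_2,a_3,a_4,a_5,a_7,a_8,a_9,a_{11},a_{13},a_{16}$. I would feed the relations arising from composite indices up to a bound comfortably beyond $16$ into a Gr\"obner-basis computation and check that the solution variety consists of exactly six points, the first-$17$-coefficient truncations of the six theta series in the statement. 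The key point is that the equations $d_{mn}=d_md_n$ render the system over-determined, so that---unlike the relations cutting out $\X$ itself---it is genuinely solvable; in particular the distinguishing data $(a_0,a_2,a_3)$ are forced into the six values $(\pm1,0,0)$, $(2,1,0)$, $(-2,-1,0)$, and $(\pm3,0,1)$.

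To conclude, I would appeal to uniqueness. Each of the six theta series belongs to $\X$---being the $t=0$ specialization of one of the families (v), (vi), (vii) of Proposition~\ref{KnownSolutions}, together with its image under Lemma~\ref{SignChange}---and each is non-sparse. By Proposition~\ref{ExceptSparse} such a series is the unique non-sparse element of $\X$ with its first $17$ coefficients, so, having matched first $17$ coefficients in the previous step, $f$ must coincide with one of them. The reverse inclusion---that all six genuinely satisfy the hypotheses---is classical: the multiplicativity of $r_2(n)/4$ and $r_4(n)/8$ (Fermat and Jacobi) covers $\vt_{\Z}$ and $\vt_{\Z[i]}$, the analogous facts for the hexagonal lattice cover $\vt_{\Z[\zeta_3]}$, and the sign-changed forms are handled by Lemma~\ref{SignChange}.

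The step I expect to be the main obstacle is the elimination in the second paragraph: it must be arranged so that the Gr\"obner-basis computation provably terminates and returns precisely these six points. A prudent hedge is to reduce the number of unknowns by hand before invoking the machine---for instance, by extracting from the lowest relations, such as the $q^6$ identity $2a_5+2a_2a_4+a_3^2=2a_2^2+a_3+2a_0a_2$ and its $f^4$ analogue, strong constraints on $a_2$ and $a_0$ (one expects $a_2\in\{0,\pm1\}$), and then carrying out the elimination separately in each resulting case.
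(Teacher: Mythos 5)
Your proposal is correct and follows essentially the same route as the paper: reduce to the finite truncated polynomial system coming from the multiplicativity of $f$, $f^2$, and $f^4$ (the paper uses the equations $P_n$ and $Q_n$ for $n\in[6,20]\cap\Z\setminus\P$ in the variables $a_0,a_2,\ldots,a_{19}$), verify by a computer-algebra elimination that it has exactly the six expected solutions, and conclude uniqueness from Proposition~\ref{ExceptSparse}. The only real differences are cosmetic: the paper disposes of the sparse case implicitly through that same computation rather than by quoting the sparse-solution theorem, and it spells out a hand-guided Maple elimination order of precisely the kind you anticipate needing in your final paragraph.
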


\begin{proof}
First, we claim that each series $f(q)$ in (\ref{solns}) is a solution.  It suffices to prove that $f(q)$ and some multiple of $f(q)^2$ lie in $\X$, and by Lemma~\ref{SignChange}, it suffices to prove
this for $\vartheta_{\Z}(q)$, $\vartheta_{\Z[i]}(q)$, and $\vartheta_{\Z[\zeta_3]}(q)$.
By Proposition~\ref{KnownSolutions}, these are of type (vii), (v), and (vi) respectively.  As $\vartheta_{\Z}(q)^2 = \vt_{\Z[i]}(q)$
and $2\vt_{\Z[i]}^2(q) = \vt_{H}(q)+2\vt_{H}(q^2)$, the squares of the theta series, suitably normalized, are elements of $\X$ of type (v), (iii), and (ii) respectively.

Let the polynomials $P_n$ be defined as in (\ref{poly-P}).  We define polynomials $Q_n$ which play the role for $f^4$ which the $P_n$ play for $f^2$; namely, if $2a_0^3 f(q)^4 = \sum_n d_nq^n$, and $D_n$ denotes the polynomial expression
in $a_0,a_2,\ldots,$ for the coefficient $d_n$, we set
$$Q_{p_1^{e_1}\cdots p_k^{e_k}} = D_{p_1^{e_1}\cdots p_k^{e_k}} - D_{p_1^{e_1}}\cdots D_{p_k^{e_k}}.$$
We consider the system of $14$ polynomial equations in the $13$ variables $a_0,a_2,\ldots,a_{19}$ given by $P_n$ and $Q_n$ for $n\in [6,20]\cap\Z\setminus \P$.

A Maple computation shows that there are exactly six solutions,
corresponding to the initial coefficients of the six modular forms listed above.  Since performing this calculation reasonably efficiently is not straightforward, we describe our steps in more detail.
We begin by solving for the variables $a_4$, $a_5$, $a_8$, $a_9$, $a_{11}$, $a_{13}$, $a_{16}$, $a_{17}$, $a_{19}$ 
using the polynomial equations $Q_6$, $P_6$, $Q_{10}$, $P_{10}$, $P_{12}$, $P_{14}$, $Q_{18}$, $P_{18}$, and $P_{20}$ respectively and substituting the resulting expressions into the equations $Q_{12}$, $Q_{14}$, $P_{15}$, $Q_{15}$, $Q_{20}$.
The resulting polynomials in $a_0$, $a_2$, $a_3$, and $a_7$ have degrees $11$, $11$, $13$, $13$, and $17$ respectively.
We reduce to equations in $a_0$ and $a_2$ by using $Q_{12}$ to eliminate $a_7$ and $Q_{14}$ to eliminate $a_3$ from $P_{15}$, $Q_{15}$, $Q_{20}$.  
These three equations have a degree $24$ common factor, $A(a_0,a_2)^2$, but pulling out this factor and using the first of the three remaining factors to eliminate $a_0$ from the second and third,
we can take g.c.d.
to solve for $a_2$.  The possible solutions, $0$, $\pm 1$, $\pm \frac 12$ can then be substituted back into the original equations $Q_{12}$, $Q_{14}$, $P_{15}$, $Q_{15}$, $Q_{20}$, at which point Maple is capable of solving directly for
all triples $(a_0,a_3,a_7)$.  To deal with solutions of $A(a_0,a_2)=0$, we eliminate $a_7$ and $a_3$ from $Q_{15}$ and $Q_{20}$ using $Q_{14}$ and $P_{15}$ respectively.  The resulting polynomials in $a_0$ and $a_2$ again have a common factor, $B(a_0,a_2)^4$,
of degree $92$.  Removing this factor from $Q_{15}$ and $Q_{20}$ and eliminating $a_0$ using $A$, we see again that $a_2\in \{0,\pm 1,\pm \frac 12\}$.  Thus, we need only consider the case $A(a_0,a_2)=B(a_0,a_2)=0$.  Eliminating $a_7$ and $a_3$ from $Q_{20}$ using $P_{15}$ and $Q_{15}$ respectively,
we obtain an equation in $a_0$ and $a_2$, and eliminating $a_2$ from this equation and $B$ using $A$, we get $a_0=0$, which is impossible.

By Proposition~\ref{ExceptSparse}, there is at most one solution $f(q)$ with each of these initial coefficient
sequences.

\end{proof}

Note that Theorem~\ref{eight}, or more precisely, the following statement, is an immediate corollary:

\begin{cor}
The set of normalized multiplicative power series $f(q)$ such that $f(q)^2$, $f(q)^4$, and $f(q)^8$ are
all multiplicative consists of 
$$\{\vt_{\Z}(q),-\vt_{\Z}(-q)\}.$$

\end{cor}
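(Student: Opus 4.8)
The plan is to combine the preceding proposition (whose conclusion is displayed in \eqref{solns}) with a short finite check. By that proposition, any normalized multiplicative $f(q)$ for which $f(q)^2$ and $f(q)^4$ are multiplicative is one of the six theta series in \eqref{solns}. Hence the candidates for the corollary are exactly these six, and all that remains is to decide which of them additionally has $f(q)^8$ multiplicative.

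First I would record a symmetry that halves the work. Exactly as in the parity computation of Lemma~\ref{SignChange}, if $g(q)=\sum_n c_n q^n$ is normalized multiplicative, then after renormalizing its $q^1$-coefficient to $1$ the series $g(-q)$ is again multiplicative, since for coprime $m,n$ one has $(-1)^{mn+1}=(-1)^{m+n}$. Applying this with $g=f^8$ shows that $f(q)^8$ is multiplicative if and only if $(-f(-q))^8=f(-q)^8$ is. Thus it suffices to examine the three base solutions $\vt_\Z(q)$, $\vt_{\Z[i]}(q)$, $\vt_{\Z[\zeta_3]}(q)$; their sign-flipped partners inherit the same verdict.

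For $\vt_\Z(q)$ the eighth power is $\sum_n r_8(n)q^n$, and $r_8(n)/16$ is multiplicative by Jacobi's theorem (recalled in the introduction); so $\vt_\Z(q)^8$ is multiplicative and, by the symmetry above, so is $(-\vt_\Z(-q))^8$. The main step is then to rule out the remaining four, which I would do by exhibiting a single failed multiplicativity relation in the normalized eighth power, namely $b_6=b_2 b_3$ (valid since $\gcd(2,3)=1$). For $\Z[i]$ I would use the identity $\vt_{\Z[i]}(q)=\vt_\Z(q)^2$ noted in the proof of the preceding proposition, so that $\vt_{\Z[i]}(q)^8=\vt_\Z(q)^{16}$ has coefficients $r_{16}(n)$; a direct count gives normalized values $b_2=15$, $b_3=140$, $b_6=16436$, so $b_6\neq b_2 b_3=2100$. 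For $\Z[\zeta_3]$, using $a_n=\sum_{d\mid n}\chi_{-3}(d)$ for the coefficients of $\vt_{\Z[\zeta_3]}(q)$, expanding through degree $6$ and squaring three times yields normalized eighth-power coefficients $b_2=7/2$, $b_3=8$, $b_6=63$, so again $b_6\neq b_2 b_3=28$. In each case multiplicativity fails, eliminating these two base solutions and, by the symmetry, their sign-flips.

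Combining the three observations leaves exactly $\vt_\Z(q)$ and $-\vt_\Z(-q)$, which is the assertion. I do not expect a genuine obstacle here: all the work is a finite coefficient computation, and the only points requiring care are invoking $\vt_{\Z[i]}(q)=\vt_\Z(q)^2$ to reduce the $\Z[i]$ case to the classical failure of multiplicativity for $r_{16}$, and using the $q\mapsto -q$ symmetry so that only three series, rather than six, need be checked.
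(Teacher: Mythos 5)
Your approach is exactly the intended one: the paper derives this corollary as an ``immediate'' consequence of Theorem~\ref{four} (it gives no further argument), and the content you supply --- restricting to the six series of \eqref{solns}, halving the work via the $q\mapsto -q$ symmetry of Lemma~\ref{SignChange}, keeping $\vt_{\Z}$ by Jacobi's theorem on $r_8$, and killing the other two base cases by exhibiting a failed relation $b_6=b_2b_3$ in the eighth power --- is precisely the finite check being left to the reader. Your $r_{16}$ numbers are correct ($b_2=15$, $b_3=140$, $b_6=16436$). However, your normalized coefficients for $\vt_{\Z[\zeta_3]}(q)^8$ are not: writing $6\vt_{\Z[\zeta_3]}(q)=\sum_\lambda q^{N(\lambda)}$ and counting representations of $n$ as a sum of eight norms from the hexagonal lattice, one gets $R(1)=48$, $R(2)=1008$, $R(3)=12144$, $R(6)=1706544$, hence normalized values $b_2=21$, $b_3=253$, $b_6=35553$, not $7/2$, $8$, $63$. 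Since $21\cdot 253=5313\neq 35553$, multiplicativity still fails and your conclusion stands, but you should redo that expansion (the values you report are not consistent with any rescaling of the correct ones).
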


Next we consider the following question:  What can be 
said about $f(q)$ if $f$ and $f^2$ both belong to the vector space $V$ 
of \emph{finite linear combinations} of multiplicative power series, or more 
generally, if all powers of $f$ belong to $V$?  The following proposition 
proves that this question is not vacuous.  

\begin{prop}
The vector space $V$ is a proper subspace of the 
complex power series in $q$.
\end{prop}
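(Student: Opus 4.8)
The plan is to exploit the density-zero phenomenon that drives the whole paper, but to do so in a limiting form, since the naive finite-dimensional count fails: properness of $V$ cannot be detected at any single truncation level. Indeed, writing a normalized multiplicative series as $\frac{1}{2a_0}+\sum_{n\ge 1}a_nq^n$ with $a_n=\prod_{p^e\|n}a_{p^e}$, the coefficient $a_n$, regarded as a function of the free prime-power values $\{a_{p^e}\}$, is a monomial, and distinct $n$ give distinct monomials. Hence no nontrivial finite linear relation holds among the coefficient functionals, and the span of all multiplicative series already surjects onto every coordinate projection $\C[[q]]\to\C^{N+1}$. So the real content must be infinitary, and I would handle it by a diagonal argument.

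For $k\ge 1$ let $V_k$ denote the set of $\C$-linear combinations of at most $k$ multiplicative power series; since multiplicative series are closed under nonzero scaling (Definition~\ref{normal}), $V=\bigcup_k V_k$. A single multiplicative series, truncated to degrees $0$ through $N$, is determined by a scalar, its constant term, and the values $a_{p^e}$ for prime powers $p^e\le N$, of which there are $|\P\cap[1,N]|$. The surviving coefficients are polynomial (indeed monomial) in these parameters, so, writing $\pi_N\colon\C[[q]]\to\C^{N+1}$ for the truncation map, $\pi_N(V_k)$ is the image of a polynomial map from an affine space of dimension at most $k\bigl(|\P\cap[1,N]|+2\bigr)$. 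By Chevalley's theorem its Zariski closure $\Sigma_{k,N}\subseteq\C^{N+1}$ satisfies $\dim\Sigma_{k,N}\le k\bigl(|\P\cap[1,N]|+2\bigr)$. The key estimate is that $|\P\cap[1,N]|=O(N/\log N)=o(N)$, so for each fixed $k$ one has $\dim\Sigma_{k,N}=o(N)$; in particular $\Sigma_{k,N}$ is a proper subvariety once $N$ is large.

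Now I would diagonalize. Choose $N_1<N_2<\cdots$ growing fast enough (say $N_k\ge 2N_{k-1}$, with $N_0:=-1$) that $\dim\Sigma_{k,N_k}<N_k-N_{k-1}$, which is possible because the left side is $o_k(N_k)$ while the right side is at least $N_k/2$. I build $g=\sum c_nq^n$ coordinate by coordinate: at stage $k$, with $c_0,\dots,c_{N_{k-1}}$ already fixed, the slice $L\subseteq\C^{N_k+1}$ lying over this prefix is an irreducible affine space of dimension $N_k-N_{k-1}>\dim\Sigma_{k,N_k}$, hence $L\not\subseteq\Sigma_{k,N_k}$; I pick the new coordinates $c_{N_{k-1}+1},\dots,c_{N_k}$ so that $(c_0,\dots,c_{N_k})\notin\Sigma_{k,N_k}$. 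Later stages only append coordinates, so $\pi_{N_k}(g)$ remains outside $\Sigma_{k,N_k}\supseteq\pi_{N_k}(V_k)$; thus $g\notin V_k$ for every $k$, and therefore $g\notin V$, proving $V\subsetneq\C[[q]]$.

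The main obstacle to emphasize is precisely that the statement is false at every finite truncation, so no one-step dimension count can work; the genuine work is converting the density-zero bound $\dim\Sigma_{k,N}=o(N)$ into a limiting statement via the slice argument (an irreducible affine space of dimension exceeding $\dim\Sigma_{k,N_k}$ cannot be contained in $\Sigma_{k,N_k}$). The bound on $\dim\pi_N(V_k)$ and this containment step are the technical heart; the rest is bookkeeping.
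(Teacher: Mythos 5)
Your argument is correct, and it takes a genuinely different route from the paper. Both proofs ultimately rest on the same source of rigidity --- a $k$-fold linear combination of multiplicative series truncated at degree $N$ depends on only $O(kN/\log N)$ parameters, by the density-zero count of prime powers --- but they convert this into properness of $V$ in different ways. You argue softly: bound $\dim \overline{\pi_N(V_k)}$ by the source dimension of the parametrizing polynomial map, note that an affine slice of dimension exceeding this bound cannot lie inside the closure, and diagonalize over $k$ to produce a (non-explicit) series outside every $V_k$. The paper instead proves a stronger, effective statement: there is an explicit growth condition $|a_{n+1}|\ge F(|a_n|)$ guaranteeing $f\notin V$. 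It does this by observing that the coefficient polynomials $C_{N+1},\dots,C_{2N}$ involve fewer than $N$ variables, so (by a monomial-counting lemma) some $m+1$ of them satisfy an identical algebraic relation $R$ of controlled degree and height, and then showing that a sufficiently rapidly growing sequence of complex numbers cannot satisfy any such relation. Your approach buys brevity and avoids the height estimates entirely, at the cost of constructivity; the paper's approach buys an explicit sufficient criterion for non-membership (and hence explicit examples), at the cost of two quantitative lemmas. One small point worth making explicit in your write-up: the truncated constant term $\lambda/(2a_0)$ is not polynomial in $a_0$, so you should (as you implicitly do) take $1/(2a_0)$ itself as one of the free parameters of the polynomial parametrization; with that understood, the dimension bound $k\bigl(|\P\cap[1,N]|+2\bigr)$ and the rest of the argument go through.
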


\begin{proof} 
We prove the following stronger claim:  There exists a function $F(x)$ such that if  $|a_{n+1}| \ge F(|a_n|)$ for all $n\ge 0$, then
$f(q)=\sum_{n=0}^\infty a_n q^n$ does not belong to $V$.  

Suppose 
$$f(q)=a_0+\sum_{i=1}^n c_i f_i(q),$$
where the $f_i$ are normalized multiplicative :
$$f_i(q)=a_{i,0}+q+a_{i,2}q^2+a_{i,3}q^3+a_{i,4}q^4+a_{i,5}q^5+a_{i,2}a_{i,3}q^6
+\cdots.$$
Let $C_k(x_i,y_{i,j})$ denote the polynomial representing the $q^k$ 
coefficient of $f$ in terms of $x_i=c_i$ and $y_{i,j}=a_{i,j}$ ($j\in\P\cup\{0\}$).  Thus 
$C_k$ is a sum of distinct products of subsets of the variables 
$$\{x_i\mid 1\le i\le n\}\cup\{y_{i,j}\mid 1\le i\le n,\ j\le k,\ 
j\in\P\cup\{0\}\}.$$
By the prime number theorem, the number of variables in the set grows 
like $nk/\log k$.  Therefore, for $N\gg 0$, the polynomials 
$C_{N+1},\,C_{N+2},\,\ldots,\,C_{2N}$ involve among them fewer than $N$ 
variables.  The proposition now follows from the following two lemmas:  
\end{proof}

\begin{lem}
There exist functions $G,H\colon \N\to \R$ such that if
$$Q_i(x_1,\ldots,x_m)=\sum_{I\in\{0,1\}^m} a_{i,I}x^I,\ 
i=1,\ldots,m+1,$$
with $a_{i,I}\in\{0,1\}$, then there exists a polynomial 
$R(y_1,\ldots,y_{m+1})$ of degree $\le G(m)$ and integer coefficients 
of absolute value $\le H(m)$ such that 
$$R(Q_1(x),\ldots Q_{m+1}(x))\equiv 0.$$
\end{lem}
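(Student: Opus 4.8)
The plan is to produce $R$ by an algebraic-dependence (pigeonhole) argument: the essential point is that $Q_1,\dots,Q_{m+1}$ are $m+1$ polynomials in only $m$ variables, so they are algebraically dependent and must satisfy some polynomial relation. The real content is not the existence of a relation but the claim that its degree and coefficients can be bounded by quantities $G(m),H(m)$ depending on $m$ alone, uniformly over all choices of which monomials actually occur in the $Q_i$. I would obtain existence and the uniform degree bound simultaneously by a dimension count, and then extract the coefficient bound by a finiteness argument.

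First I would fix a degree $D$ and consider the products
$$Q^J := Q_1^{J_1}\cdots Q_{m+1}^{J_{m+1}},$$
ranging over all multi-indices $J=(J_1,\dots,J_{m+1})$ of non-negative integers with $|J|=\sum_i J_i\le D$. Since each $Q_i$ is multilinear, hence of degree $\le m$, every $Q^J$ is a polynomial in $x_1,\dots,x_m$ of degree $\le mD$. The number of such products is $\binom{D+m+1}{m+1}$, which grows like $D^{m+1}$, while they all lie in the space of polynomials in $m$ variables of degree $\le mD$, of dimension $\binom{mD+m}{m}$, growing only like $D^m$. Because both growth rates are governed by $m$ alone, there is an integer $D=G(m)$, depending only on $m$, at which the number of products strictly exceeds the ambient dimension. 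For such $D$ the $Q^J$ are $\Q$-linearly dependent: there are rationals $c_J$, not all zero, with $\sum_{|J|\le D} c_J Q^J\equiv 0$. Setting $R(y_1,\dots,y_{m+1})=\sum_J c_J y^J$ then gives a nonzero $R$ of degree $\le G(m)$ with $R(Q_1,\dots,Q_{m+1})\equiv 0$, and clearing denominators makes its coefficients integral.

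It remains to bound these integers by some $H(m)$ independent of the particular $Q_i$. The simplest route is finiteness: once $m$ is fixed there are only finitely many admissible tuples $(Q_1,\dots,Q_{m+1})$—at most $2^{(m+1)2^m}$ of them, since each $Q_i$ is specified by a subset of the $2^m$ squarefree monomials—so I would choose one valid integer relation $R$ for each tuple and let $H(m)$ be the largest coefficient occurring over this finite list. (Alternatively, keeping $D=G(m)$ fixed, the dependence $\sum_J c_J Q^J=0$ is a homogeneous integral linear system whose matrix entries, being coefficients of the expanded $Q^J$, are bounded in terms of $m$; Siegel's lemma then yields a nonzero integer solution with entries bounded by a function of $m$ alone.)

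The one point demanding care—and the main obstacle—is precisely the uniformity of $G(m)$ and $H(m)$ over all configurations of monomials, rather than the existence of $R$, which is automatic from transcendence degree. The degree bound is uniform because the product-versus-dimension comparison depends on $D$ only through the number of variables $m$, and the coefficient bound is uniform because the space of configurations is finite once $m$ is fixed.
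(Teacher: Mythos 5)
Your proposal is correct and follows essentially the same route as the paper: both compare the $\binom{D+m+1}{m+1}$ monomials in $Q_1,\ldots,Q_{m+1}$ of degree $\le D$ against the $\binom{mD+m}{m}$-dimensional space of polynomials of degree $\le mD$ in $x_1,\ldots,x_m$, and obtain the coefficient bound from the fact that the resulting integral linear system has entries bounded in terms of $m$ alone (the paper notes the expanded coefficients are $\le (2^m)^D$ and concludes as in your Siegel's-lemma aside). Your alternative finiteness-of-configurations argument for $H(m)$ is a minor variant but equally valid.
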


\begin{proof}
For any positive integer $N$,
there are $\binom{N+m+1}{m+1}$ monomials in $Q_1,\ldots,Q_{m+1}$ of degree $\le N$;
all are of degree $\le mN$ as polynomials in $x_1,\ldots,x_m$ and have all coefficients $\le (2^m)^N$.  The total number of monomials
of degree $\le mN$ in the $x_i$ is $\binom{mN+m}{m}$.  If $N = G(m)$ is sufficiently large, the former number is larger, so there must be some 
linear relation between the monomials, and the coefficients can be bounded by $H(m)$ depending only on $N$ and $m$, and therefore only on $m$.

 \end{proof}

\begin{lem}
Given functions $G,H\colon \N\to \N$ there exists a function $F\colon \R\to \R$ such that if $m\ge 2$, $z_1,\ldots,z_{2m}\in \C$
satisfy $|z_{i+1}|\ge F(|z_i|)$ for $1\le i\le 2m-1$, and $R\in \Z[x_1,\ldots,x_m]$ is a non-zero polynomial with degree $\le G(m)$ and coefficients with absolute value $\le H(m)$, then
$$R(z_{m+1},\ldots,z_{2m})\neq 0.$$
\end{lem}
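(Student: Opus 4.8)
The plan is to arrange that, once the $|z_i|$ grow fast enough, the sum $R(z_{m+1},\dots,z_{2m})$ is dominated by a single monomial, so it cannot vanish. I would write $w_l=|z_{m+l}|$ for $1\le l\le m$. Taking $F$ increasing with $F(x)\ge x+2$ makes the $w_l$ strictly increasing and $\ge 1$, with $w_{l+1}\ge F(w_l)$ and, crucially, $w_1=|z_{m+1}|\ge F^{(m)}(|z_1|)\ge F^{(m)}(0)$, where $F^{(m)}$ is the $m$-fold iterate; this is exactly what the ``lead-in'' points $z_1,\dots,z_m$ are for. Set $d=G(m)$ and write $R=\sum_I c_I x^I$, the sum running over exponent vectors $I=(i_1,\dots,i_m)$ with entries in $\{0,\dots,d\}$, at most $(d+1)^m$ of them, with integer coefficients satisfying $|c_I|\le H(m)$. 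I would order exponent vectors by reverse lexicographic order, comparing the exponent $i_m$ of $x_m$ (which carries the largest point $z_{2m}$) first, then $i_{m-1}$, and so on, and let $I^*$ be the largest vector with $c_{I^*}\ne 0$, so $|c_{I^*}|\ge 1$.

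The first step is the domination estimate. For any $J\ne I^*$ with $c_J\ne 0$, let $k$ be the largest index at which $I^*$ and $J$ differ; then $i^{*}_{k}\ge j_{k}+1$, while $i^{*}_{l}=j_{l}$ for $l>k$, so
$$\frac{|z^{I^*}|}{|z^{J}|}=\prod_{l\le k} w_{l}^{\,i^{*}_{l}-j_{l}}\ge w_{k}\prod_{l<k} w_{l}^{-d}\ge \frac{w_{k}}{w_{k-1}^{\,dm}},$$
the empty product being $1$ when $k=1$, which leaves the bound $w_1$. Hence it suffices to secure the two growth conditions
$$w_1>N_m,\qquad w_k> w_{k-1}^{\,dm}\,N_m\quad(2\le k\le m),\qquad N_m:=2H(m)(G(m)+1)^m .$$
Indeed these give $|z^{J}|<|z^{I^*}|/N_m$ for each competitor, so $\sum_{J\ne I^*}|c_J||z^{J}|<H(m)(d+1)^m|z^{I^*}|/N_m=\tfrac12|z^{I^*}|$, whence $|R(z_{m+1},\dots,z_{2m})|\ge |z^{I^*}|-\tfrac12|z^{I^*}|>0$.

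Everything now reduces to producing one increasing $F$, independent of $m$, with $F^{(m)}(0)>N_m$ and $F(x)>x^{G(m)m}N_m$ for all $x\ge F^{(m)}(0)$ and every $m\ge 2$; the first inequality yields $w_1\ge F^{(m)}(0)>N_m$, and the second, applied at $x=w_{k-1}\ge w_1\ge F^{(m)}(0)$, yields $w_k\ge F(w_{k-1})>w_{k-1}^{\,dm}N_m$. I would define (on $[0,\infty)$, extending by $F(x)=F(0)$ for $x<0$, since only nonnegative arguments occur)
$$F(x)=2+\max\Bigl(x+2,\ \max_{2\le m\le \lceil x\rceil+2} x^{G(m)m}N_m\Bigr),$$
an increasing function with $F(x)\ge x+2$, hence $F^{(m)}(0)\ge 2m\ge m$. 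The key point is that these thresholds $T_m:=F^{(m)}(0)$ grow with $m$, so for a fixed input $x$ only the finitely many $m$ with $T_m\le x$ can matter, and all of them satisfy $m\le x\le \lceil x\rceil+2$, precisely the range of the inner maximum. The multiplicative bound $F(x)>x^{G(m)m}N_m$ for $x\ge T_m$ is then immediate; and writing $y=F^{(m-1)}(0)\ge 2(m-1)\ge m-2$ (so that $m$ lies in the window for argument $y$), a short induction gives $F^{(m)}(0)=F(y)\ge y^{G(m)m}N_m+2>N_m$, which is the iterate bound.

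The step I expect to be the main obstacle is this last, mildly self-referential, construction of $F$. A single fixed function cannot pointwise dominate the $m$-dependent quantities $x^{G(m)m}N_m$, since for fixed $x>1$ they tend to infinity with $m$; the escape is that the lead-in forces $w_1\ge F^{(m)}(0)\to\infty$, so at any given magnitude only boundedly many $m$ compete. Making this quantitative — choosing $F$ whose own iterates outrun $N_m$ while its per-step growth outruns $x^{G(m)m}N_m$ above each threshold $T_m$ — is the crux, and the ``maximum over a window $2\le m\le\lceil x\rceil+2$'' in the definition of $F$ is exactly the device that renders these competing demands consistent.
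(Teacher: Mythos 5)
Your proof is correct and takes essentially the same route as the paper's: order the monomials lexicographically with the exponent of $z_{2m}$ weighted most heavily, arrange growth conditions so the leading monomial dominates the sum of the at most $(G(m)+1)^m$ others, and use the lead-in points $z_1,\dots,z_m$ to force $|z_{m+1}|\ge F^{(m)}(0)\gtrsim m$, which is what lets a single $m$-independent function $F$ absorb the $m$-dependent requirements. The only difference is the explicit form of $F$ (the paper takes $F(x)=(1+H(|x|))e^{(1+G(|x|))x}+3$ with $G,H$ replaced by monotone majorants evaluated at $|x|$, while you take a maximum over the window $2\le m\le\lceil x\rceil+2$), which is a cosmetic variation on the same device.
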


\begin{proof}
Let $G^*(x)$ (resp. $H^*(x)$) denote the maximum value of $G(n)$ (resp. $H(n)$) for $n\le x$.  Replacing $G$ and $H$ by
$G^*$ and $H^*$ respectively, we may regard both as non-decreasing functions defined on $[0,\infty)$.  Let
$$F(x) =(1+H(|x|)) e^{(1+G(|x|))x}+3.$$
Then we have $F(x) \ge e^x+3 >  \max(3,x+3)$.  By induction on $r$, we have $|z_{r+1}|\ge 3r\ge r+2$ for all $r\ge 1$, so
$$|z_{r+2}| > F(|z_{r+1}|) > (1+H(r+2))e^{(1+G(r+2))|z_{r+1}|}.$$
For all $x,a\ge 0$, we have $e^{ax} = (e^x)^a \ge (x^e)^a = x^{ea},$ so
$$|z_{r+2}| > H(r+2) |z_{r+1}|^{e(1+G(r+2))} = H(r+2) |z_{r+1}|^{G(r+2)}(r+2)^{1+G(r+2)}.$$
In particular, for $1\le j\le m$,
$$|z_{m+j}| > (1+G(m))^n H(m) \prod_{i=1}^{j-1} |z_{m+i}|^{G(m}.$$
Thus, given $m$-tuples of non-negative integers $\le G(m)$ such that 
$$(k_1,\ldots,k_m)>(k'_1,\ldots,k'_m)$$
in lexicographic order, we have
$$|z_{2m}|^{k_1}\cdots |z_{m+2}|^{k_{m-1}}|z_{m+1}|^{k_m} >  (1+G(m))^n H(m) |z_{2m}|^{k'_1}\cdots |z_{m+2}|^{k'_{m-1}}|z_{m+1}|^{k'_m}$$
which in turn implies that any non-trivial integer linear combinations of monomials $z_{2m}^{k_1}\cdots z_{m+1}^{k_m}$ with $k_i\le G(m)$ and coefficient
absolute values $\le H(m)$ is non-zero.
\end{proof}

If $a_n$ and $b_n$ are multiplicative sequences, then
the sequences $n\mapsto a_n b_n$ and $n\mapsto \sum_{ij=n} a_i b_j$ are multiplicative.
The polynomial
$$S_n(q)=\sum_{d\mid n} q^d$$
has multiplicative coefficients, and every polynomial, in particular, 
every monomial in $q$ is a finite linear combination of the polynomials
$S_i$.  It follows that $f(q^n)\in V$ whenever $f(q)\in V$.

If $M_*(N)$ denotes the graded ring of modular forms of integral
weight for $\Gamma_1(N)$, then it is clear by reduction to the case of
newforms that $\bigcup_N 
M_*(N)\subset V$.  As the union
of the $M_*(N)$ is a ring, the same is true for all powers of $f$.  Certain
power series, such as $24E_2(q)$, though not modular forms themselves, are 
congruent to elements of $M_*(N)$ modulo every prime \cite{Congruences}.  Naturally,
any integer power of such a series has the same property.

\begin{que}Is $E_2(q)^2\in V$?
\end{que}

In a different direction, we have the following:

\begin{prop}
If $f(q)$ is the $q$-expansion of a modular form of weight
$1/2$, then $f$ and $f^2$ are both in $V$.
\end{prop}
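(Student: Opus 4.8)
The statement asserts that if $f(q)$ is the $q$-expansion of a modular form of weight $1/2$, then both $f$ and $f^2$ lie in the space $V$ of finite linear combinations of multiplicative power series. The plan is to invoke the classical structure theory of modular forms of weight $1/2$, which is rigid: by Serre--Stark, every holomorphic modular form of weight $1/2$ is a finite linear combination of one-variable theta series $\vt(q) = \sum_{n\in\Z} q^{n^2}$ twisted by Dirichlet characters and dilated in the variable. Concretely, such an $f$ is a finite $\C$-linear combination of series of the form $\sum_{n} \chi(n) q^{tn^2}$ for suitable characters $\chi$ and positive integers $t$. The key observation is that each such twisted, dilated theta series has multiplicative coefficients (up to normalization): the sequence $n\mapsto (\text{coefficient of } q^n)$ is supported on values of the shape $tn^2$, and the assignment is governed by counting representations weighted by a character, which is a multiplicative function of the relevant index. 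Thus $f\in V$ immediately.

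For $f^2$, I would use the multiplicative closure properties of $V$ noted just before the proposition: if $(a_n)$ and $(b_n)$ are multiplicative sequences, then both the pointwise product $n\mapsto a_nb_n$ and the Dirichlet convolution $n\mapsto \sum_{ij=n} a_ib_j$ are again multiplicative, and moreover $f(q^n)\in V$ whenever $f(q)\in V$. Squaring $f$ produces a finite linear combination of products $\vt_{\chi,s}(q)\,\vt_{\chi',t}(q)$ of pairs of weight-$1/2$ theta series; each such product is a weight-$1$ form. The cleanest route is to recognize each pairwise product as (a dilation of) a theta series attached to a binary quadratic form or, equivalently, to the ring of integers in an imaginary quadratic field, exactly as in the cases catalogued in Proposition~\ref{KnownSolutions}. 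Since theta series of such lattices are multiplicative when the relevant class number is one and are in any case finite linear combinations of multiplicative series in general, each product lies in $V$, and hence so does the finite sum $f^2$.

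The main obstacle is the second half: a product of two weight-$1/2$ theta series need not itself be a single multiplicative series, so one cannot argue coefficientwise in one step. One must instead expand $f^2$ as $\sum_{d\mid N} c_d\, g_d(q^d)$ where each $g_d$ is a theta series of a binary form, and then appeal both to the fact that $\bigcup_N M_*(N)\subset V$ (established earlier in this section by reduction to newforms) and to the closure of $V$ under the dilation $q\mapsto q^d$. In other words, the cleanest argument avoids ad hoc multiplicativity checks entirely: $f^2$ is a genuine holomorphic modular form of weight $1$ and some level $N$, hence lies in $M_*(N)\subset V$ by the general inclusion already recorded, and likewise $f$ itself lies in $M_*(N')\subset V$. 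This reduces the whole proposition to the single structural input that weight-$1/2$ forms are modular (so their squares are weight-$1$ modular forms), combined with the already-proved containment of all $M_*(N)$ in $V$.

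I would therefore organize the proof in two short moves: first, cite Serre--Stark (or simply the holomorphy and finite level of $f$ and $f^2$) to place both $f$ and $f^2$ inside $\bigcup_N M_*(N)$; second, invoke the inclusion $\bigcup_N M_*(N)\subset V$ established earlier in this section. The only point requiring a word of care is that $f$ has half-integral weight, so it is not itself in any $M_*(N)$ as defined (those are integral-weight spaces); this is handled by writing $f$ explicitly as a finite combination of the multiplicative theta series $\sum_n\chi(n)q^{tn^2}$, which lie in $V$ directly, while $f^2$, having integral weight $1$, falls under the integral-weight inclusion. That split is the entire content of the argument.
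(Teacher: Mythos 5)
Your proposal is correct and follows essentially the same route as the paper: $f^2$ is an integral-weight modular form, hence lies in $\bigcup_N M_*(N)\subset V$ by the inclusion recorded earlier, while $f$ itself is handled via the Serre--Stark decomposition into dilated character-twisted theta series, each of which is (a dilation of) a multiplicative series. The only detail the paper makes explicit that you elide is that one should reduce to the coprime case and range over \emph{even} characters of $(\Z/m\Z)^*$ when expressing the partial theta sums as multiplicative series.
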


\begin{proof} Obviously $f^2\in M_*(N)$ for some $N$, so $f^2\in V$. 
As for $f$, by \cite{SerreStark}, it is a finite linear combination
of series of the form 
$$\sum_{n=-\infty}^{\infty}\psi(n)q^{kn^2},$$
where $k$ is a positive integer and $\psi$ is periodic.  Equivalently, $f$
is a linear combination of series 
$$f_{k,m,a}=\sum_{n\in a+m\Z} q^{kn^2}=\begin{cases} 
         \frac 12{\displaystyle\mathop{\sum}_{\con{n}{\pm a}{m}}q^{kn^2}} &\text{if } m\neq 2,\\
         {\displaystyle\mathop{\sum}_{\con{n}{a}{2}}q^{kn^2}} &\text{if } $m=2$,
\end{cases}$$
where $(a,m)=1$.  

It therefore suffices to prove that $f_{1,m,a}\in V$ whenever $a$ 
and $m$ are relatively prime, but this is clear since $f_{1,m,a}$ is a 
linear combination of the multiplicative power series 
$\sum_{n\in\Z}\chi(n)q^{n^2}$, as $\chi$ ranges over the even characters of
$(\Z/m\Z)^*$.
\end{proof}

\begin{que}Are forms of half-integral weight $k\ge 3/2$ finite
linear combinations of multiplicative power series?
\end{que}

\vfill\eject
\section{Appendix, by Anne Larsen}

A computer was used to find all multiplicative series whose squares are also multiplicative (when multiplied by a suitable scalar), mod small primes. The series found, excluding mod $p$ versions of the general types listed before and ``sparse'' solutions, are listed in the tables below. However, for each solution, the same series with even coefficients multiplied by $-1$ will also be a solution; only one solution in each pair is exhibited in the tables.

Note that there is no table of mod $2$ solutions because $$(1+a_1q+a_2q^2+\ldots)^2 = 1+(a_1)^2q^2+(a_2)^2q^4+\ldots \pmod 2,$$ which has no $q$ term and is therefore not strictly a multiplicative series. 

For all but one (mod $3$) solution, the comments column gives a possible match for the series as some modular form. Usually, the series is identified as a modified Eisenstein or $\theta$-series. (The modification consists of taking some finite linear combination of $f(q^k)$, so, for example, the mod $19$ solution listed as $E_6$ is actually $E_6(q) - E_6(q^2) + 7E_6(q^4)$.) However, there are also some cusp forms (plus a scalar term, which is interpreted as $E_{p-1}$), which are identified by their label on the online LMFDB database of holomorphic cusp forms. Proposition 3.7 provides a proof that the mod $13$ series identified as $1.12.1.a$ in the tables is indeed multiplicative; presumably, proofs for the other cusp forms should be similar.

There were no exceptional solutions mod 23, 29, or 31.

\begin{table}
\caption{Exceptional solutions (mod 3)}
\center
\begin{tabular}{|c|c|c|c|c|c|c|c|c|c|c|c|} 
\hline 
$1/2a_0$ & $a_2$ & $a_3$ & $a_4$ & $a_5$ & $a_7$ & $a_8$ & $a_9$ & $a_{11}$ & $a_{13}$ & $a_{16}$ & Comments\\ 
\hline 
$1$ & $0$ & $0$ & $0$ & $0$ & $2$ & $0$ & $0$ & $0$ & $2$ & $0$& $E_2$ \\ 
\hline 
$1$ & $0$ & $1$ & $0$ & $0$ & $2$ & $0$ & $1$ & $0$ & $2$ & $0$& $E_2$ \\ 
\hline 
$1$ & $0$ & $1$ & $1$ & $0$ & $1$ & $0$ & $1$ & $0$ & $2$ & $1$& $E_2$ \\ 
\hline 
$1$ & $0$ & $2$ & $2$ & $2$ & $0$ & $0$ & $0$ & $1$ & $0$ & $2$& $\vartheta_{\Z[\frac{1+\sqrt{-11}}2]}$ \\ 
\hline 
$1$ & $1$ & $1$ & $0$ & $0$ & $1$ & $1$ & $1$ & $0$ & $2$ & $0$& $E_2$\\ 
\hline 
$1$ & $1$ & $1$ & $0$ & $0$ & $2$ & $0$ & $1$ & $0$ & $2$ & $0$& $E_2$ \\ 
\hline 
$1$ & $1$ & $1$ & $1$ & $2$ & $2$ & $1$ & $1$ & $0$ & $2$ & $1$& $E_{2}$ \\ 
\hline 
$1$ & $1$ & $1$ & $2$ & $1$ & $2$ & $1$ & $1$ & $0$ & $2$ & $2$& $E_{2}$ \\ 
\hline 
$1$ & $1$ & $2$ & $1$ & $1$ & $0$ & $0$ & $1$ & $2$ & $1$ & $1$& $15.2.1.a$\\ 
\hline 
$1$ & $1$ & $2$ & $2$ & $0$ & $0$ & $2$ & $0$ & $2$ & $0$ & $2$& $\vartheta_{\Z[\sqrt2]}$\\ 
\hline 
$1$ & $2$ & $1$ & $0$ & $2$ & $2$ & $2$ & $1$ & $0$ & $2$ & $0$& $E_{2}$ \\ 
\hline 
$1$ & $2$ & $1$ & $1$ & $0$ & $0$ & $0$ & $1$ & $2$ & $2$ & $1$& $75.2.1.a$ or $75.2.1.b$\\ 
\hline 
$1$ & $2$ & $1$ & $2$ & $1$ & $2$ & $0$ & $1$ & $1$ & $1$ & $2$& $21.2.1.a$ \\ 
\hline 
$1$ & $2$ & $2$ & $0$ & $1$ & $1$ & $2$ & $1$ & $1$ & $1$ & $2$& \\ 
\hline 
$1$ & $2$ & $2$ & $2$ & $0$ & $1$ & $2$ & $1$ & $0$ & $1$ & $2$& $50.2.1.b$\\ 
\hline 
\end{tabular} 
\end{table} 

\begin{table}
\caption{Exceptional solutions (mod 5)}
\center
\begin{tabular}{|c|c|c|c|c|c|c|c|c|c|c|c|} 
\hline 
$1/2a_0$ & $a_2$ & $a_3$ & $a_4$ & $a_5$ & $a_7$ & $a_8$ & $a_9$ & $a_{11}$ & $a_{13}$ & $a_{16}$ & Comments\\ 
\hline 
$1$ & $1$ & $2$ & $2$ & $1$ & $4$ & $0$ & $4$ & $2$ & $3$ & $4$& $E_{4}$ \\ 
\hline 
$1$ & $1$ & $2$ & $3$ & $0$ & $1$ & $0$ & $2$ & $2$ & $2$ & $1$& $5.4.1.a$ \\ 
\hline 
$1$ & $2$ & $3$ & $3$ & $1$ & $3$ & $0$ & $3$ & $2$ & $4$ & $4$& $E_{2}$ \\ 
\hline 
$1$ & $2$ & $4$ & $4$ & $1$ & $3$ & $4$ & $3$ & $2$ & $4$ & $4$& $E_{2}$ \\ 
\hline 
$1$ & $3$ & $4$ & $2$ & $1$ & $3$ & $0$ & $3$ & $2$ & $4$ & $1$& $E_{2}$ \\ 
\hline 
$2$ & $3$ & $3$ & $4$ & $1$ & $4$ & $2$ & $2$ & $2$ & $3$ & $1$& $E_{4}$ \\ 
\hline 
\end{tabular} 
\end{table} 

\begin{table}
\caption{Exceptional solutions (mod 7)}
\center
\begin{tabular}{|c|c|c|c|c|c|c|c|c|c|c|c|} 
\hline 
$1/2a_0$ & $a_2$ & $a_3$ & $a_4$ & $a_5$ & $a_7$ & $a_8$ & $a_9$ & $a_{11}$ & $a_{13}$ & $a_{16}$ & Comments\\ 
\hline 
$1$ & $6$ & $2$ & $3$ & $6$ & $2$ & $0$ & $4$ & $3$ & $1$ & $2$& $3.6.1.a$ \\ 
\hline 
$2$ & $5$ & $2$ & $5$ & $2$ & $0$ & $5$ & $1$ & $0$ & $2$ & $5$& $\vartheta_{\Z[i]}$ \\ 
\hline 
$3$ & $2$ & $4$ & $1$ & $1$ & $1$ & $6$ & $6$ & $5$ & $0$ & $2$& $E_{2}$ \\ 
\hline 
$3$ & $2$ & $5$ & $2$ & $2$ & $0$ & $2$ & $1$ & $0$ & $2$ & $2$& $\vartheta_{\Z[i]}$ \\ 
\hline 
$3$ & $3$ & $0$ & $2$ & $0$ & $0$ & $3$ & $2$ & $1$ & $0$ & $4$& $7.6.1.a$ \\ 
\hline 
$3$ & $3$ & $6$ & $5$ & $4$ & $1$ & $6$ & $3$ & $3$ & $0$ & $3$& $E_{6}$ \\ 
\hline 
\end{tabular} 
\end{table} 

\begin{table}
\caption{Exceptional solutions (mod 11)}
\center
\begin{tabular}{|c|c|c|c|c|c|c|c|c|c|c|c|} 
\hline 
$4$ & $3$ & $1$ & $10$ & $0$ & $2$ & $3$ & $1$ & $0$ & $2$ & $10$& $\vartheta_{\Z[\zeta_3]}$ \\ 
\hline 
$5$ & $6$ & $9$ & $8$ & $1$ & $5$ & $7$ & $0$ & $7$ & $4$ & $2$& $2.10.1.a$\\ 
\hline 
\end{tabular} 
\end{table} 

\begin{table}
\caption{Exceptional solutions (mod 13)}
\center
\begin{tabular}{|c|c|c|c|c|c|c|c|c|c|c|c|} 
\hline 
$1/2a_0$ & $a_2$ & $a_3$ & $a_4$ & $a_5$ & $a_7$ & $a_8$ & $a_9$ & $a_{11}$ & $a_{13}$ & $a_{16}$ & Comments\\ 
\hline 
$2$ & $2$ & $5$ & $10$ & $7$ & $0$ & $6$ & $3$ & $0$ & $8$ & $7$& $1.12.1.a$ \\ 
\hline 
$2$ & $5$ & $11$ & $10$ & $9$ & $6$ & $11$ & $8$ & $6$ & $1$ & $6$& $E_{4}$ \\ 
\hline 
$4$ & $6$ & $10$ & $9$ & $6$ & $12$ & $1$ & $0$ & $8$ & $1$ & $5$& $E_{6}$ \\ 
\hline 
\end{tabular} 
\end{table} 

\begin{table}
\caption{Exceptional solutions (mod 17)}
\center
\begin{tabular}{|c|c|c|c|c|c|c|c|c|c|c|c|} 
\hline 
$1/2a_0$ & $a_2$ & $a_3$ & $a_4$ & $a_5$ & $a_7$ & $a_8$ & $a_9$ & $a_{11}$ & $a_{13}$ & $a_{16}$ & Comments\\ 
\hline 
$3$ & $12$ & $1$ & $16$ & $5$ & $14$ & $16$ & $12$ & $5$ & $7$ & $14$& $1.16.1.a$\\ 
\hline 
$4$ & $7$ & $12$ & $11$ & $11$ & $13$ & $13$ & $14$ & $4$ & $5$ & $14$& $E_{8}$\\ 
\hline 
\end{tabular} 
\end{table} 

\begin{table}
\caption{Exceptional solutions (mod 19)}
\center
\begin{tabular}{|c|c|c|c|c|c|c|c|c|c|c|c|} 
\hline 
$1/2a_0$ & $a_2$ & $a_3$ & $a_4$ & $a_5$ & $a_7$ & $a_8$ & $a_9$ & $a_{11}$ & $a_{13}$ & $a_{16}$ & Comments\\ 
\hline 
$1$ & $5$ & $15$ & $5$ & $2$ & $0$ & $5$ & $1$ & $0$ & $2$ & $5$& $\vartheta_{\Z[i]}$ \\ 
\hline 
$5$ & $6$ & $15$ & $6$ & $2$ & $0$ & $6$ & $1$ & $0$ & $2$ & $6$& $\vartheta_{\Z[i]}$ \\ 
\hline 
$5$ & $13$ & $16$ & $5$ & $10$ & $12$ & $15$ & $13$ & $8$ & $15$ & $12$& $E_{6}$ \\ 
\hline 
$9$ & $7$ & $15$ & $7$ & $2$ & $0$ & $7$ & $1$ & $0$ & $2$ & $7$& $\vartheta_{\Z[i]}$ \\ 
\hline 
\end{tabular} 
\end{table} 

\end{document}